\newcommand{\bB}{{\mathbb{B}}}
\newcommand{\bC}{{\mathbb{C}}}
\newcommand{\bD}{{\mathbb{D}}}
\newcommand{\bN}{{\mathbb{N}}}
\newcommand{\bT}{{\mathbb{T}}}
  \newcommand{\A}{{\mathcal{A}}}
  \newcommand{\F}{{\mathcal{F}}}
\renewcommand{\H}{{\mathcal{H}}}
  \newcommand{\K}{{\mathcal{K}}}  
  \newcommand{\M}{{\mathcal{M}}}
  \newcommand{\N}{{\mathcal{N}}}
\renewcommand{\O}{{\mathcal{O}}}
\renewcommand{\S}{{\mathcal{S}}}
  \newcommand{\U}{{\mathcal{U}}}
\newcommand{\fA}{{\mathfrak{A}}}
\newcommand{\fB}{{\mathfrak{B}}}
\newcommand{\fD}{{\mathfrak{D}}}
\newcommand{\fJ}{{\mathfrak{J}}}
\newcommand{\fK}{{\mathfrak{K}}}
\newcommand{\fT}{{\mathfrak{T}}}
\newcommand{\rA}{\mathrm{A}}
\newcommand{\rC}{\mathrm{C}}
\newcommand{\rP}{\mathrm{P}}
\newcommand{\rU}{\mathrm{U}}
\renewcommand{\phi}{\varphi}
\newcommand{\upchi}{{\raise.35ex\hbox{$\chi$}}}
\newcommand{\ol}{\overline}
\newtheorem{lemma}{Lemma}[section]
\newtheorem{theorem}[lemma]{Theorem}
\newtheorem{proposition}[lemma]{Proposition}
\newtheorem{corollary}[lemma]{Corollary}
\theoremstyle{definition}
\newtheorem{definition}[lemma]{Definition}
\newtheorem{remark}[lemma]{Remark}
\newtheorem{example}{Example}
\date{\today}
\author{Ian Thompson}
\address{Department of Mathematics, University of Manitoba, Winnipeg, Manitoba, Canada R3T 2N2}
\email{thompsoi@myumanitoba.ca\vspace{-2ex}}
\thanks{The author was partially supported by an NSERC CGS-D Scholarship.}
\title[Approximate unique extension property]{An approximate unique extension property for completely positive maps}
\begin{document}
\begin{abstract}
We study the closure of the unitary orbit of a given point in the non-commutative Choquet boundary of a unital operator space with respect to the topology of pointwise norm convergence. This may be described more extensively as the $*$-representations of the $\rC^*$-envelope that are approximately unitarily equivalent to one that possesses the unique extension property. Although these $*$-representations do not necessarily have the unique extension property themselves, we show that their unital completely positive extensions display significant restrictions. When the underlying operator space is separable, this allows us to connect our work to Arveson's hyperrigidity conjecture. Finally, as an application, we reformulate the classical \v{S}a\v{s}kin Theorem and Arveson's essential normality conjecture.
\end{abstract}
\maketitle

\section{Introduction}\label{S:Introduction}

The Weyl von-Neumann Theorem characterizes normal operators on a separable Hilbert space as the diagonal operators modulo the compacts. In \cite{voiculescu1976non}, Voiculescu unveiled a vast non-commutative generalization regarding $*$-representations of separable $\rC^*$-algebras on separable Hilbert spaces. Namely, every $*$-representation $\pi:\fA\rightarrow B(\H_\pi)$ on a separable Hilbert space lies in the pointwise norm closed unitary orbit of some direct sum of irreducible $*$-representations. That is, there is a collection of irreducible $*$-representations $\{ \pi_i:\fA\rightarrow B(\H_i): i\in I\}$ and a sequence of unitary operators $u_n:\bigoplus_{i\in I}\H_i\rightarrow\H_\pi$ such that \[\lim_{n\rightarrow\infty}\left\| u_n^*\pi(t)u_n -\left(\bigoplus_{i\in I}\pi_i\right)(t)\right\| = 0, \ \ \ \ \ \ \ t\in\fA.\]Moreover, we may assume that $u_n^*\pi(t)u_n$ is equal to the direct sum of irreducible $*$-representations modulo the compacts for each $n$. Within \cite{voiculescu1976non}, there are several results collectively labelled as `Voiculescu's Theorem.' For our purposes, by Voiculescu's Theorem we mean that $*$-representations of a separable $\rC^*$-algebra acting on a separable Hilbert space possess the same point-norm closed unitary equivalence class precisely when the $*$-representations have pointwise equal rank. 

In \cite[Theorem 3.14]{hadwin1981nonseparable}, Hadwin provided a non-separable adaption of Voiculescu's Theorem, which we now describe. For this, recall that the rank of a Hilbert space operator will refer to the dimension of the closure of the range. We say that two $*$-representations $\pi,\sigma$ of a $\rC^*$-algebra $\fA$ are \emph{approximately unitarily equivalent} if there is a net of unitary operators $u_\beta:\H_\sigma\rightarrow\H_\pi$ such that $\| u_\beta^* \pi(t)u_\beta - \sigma(t)\|\rightarrow0$ for each $t\in \fA$. Whenever $\fA$ and $\H_\pi$ are separable, the net may be replaced by a sequence. Then, Hadwin showed that $\pi$ and $\sigma$ are approximately unitarily equivalent if and only if $\text{rank}(\pi(t)) = \text{rank}(\sigma(t))$ for every $t\in\fA$. When $\H_\pi$ fails to be separable, one may imagine difficulties in capturing cardinality but our results do not require such intricacies. As a consequence to Hadwin's Theorem, one can generally expect that the approximate unitary equivalence class of a $*$-representation is quite large. Indeed, if $\pi$ and $\sigma$ are $*$-representations with the same kernel, then $\pi^{(\kappa)}$ and $\sigma^{(\kappa)}$ are approximately unitarily equivalent for a suitably large choice of cardinal $\kappa$.

Our work focuses on studying the approximate unitary equivalence class of a particularly significant class of $*$-representations that are associated with a unital operator space. To this end, an \emph{operator space} is a subspace $\M\subset B(\H)$ and, when $\M$ is self-adjoint and unital, $\M$ is referred to as an \emph{operator system}. We let the smallest $\rC^*$-algebra containing a unital operator space $\M$ in $B(\H)$ be denoted by $\rC^*(\M)$. A \emph{$\rC^*$-cover} is a pair $(\fA,\iota)$ where $\iota:\M\rightarrow B(\K)$ is a unital completely isometric linear map and $\fA = \rC^*(\iota(\M))$. The \emph{$\rC^*$-envelope} of $\M$ is a $\rC^*$-cover $(\rC^*_e(\M), \varepsilon)$ that satisfies the following universal property: whenever $(\fA,\iota)$ is a $\rC^*$-cover of $\M$, there exists a $*$-representation $\theta:\fA\rightarrow\rC^*_e(\M)$ satisfying $\theta\circ\iota=\varepsilon$. Existence of the $\rC^*$-envelope is non-trivial and was originally shown in \cite{hamana1979injective}. Alternatively, a large program of Arveson \cite{arveson1969subalgebras} showed that the $\rC^*$-envelope always exists due to the existence of sufficiently many boundary representations for $\M$ \cite{arveson2008noncommutative},\cite{davidson2015choquet},\cite{dritschel2005boundary},\cite{muhly1998algebraic}. We say that a $*$-representation $\pi:\rC^*(\M)\rightarrow B(\H_\pi)$ has the \emph{unique extension property} with respect to $\M$ if, whenever $\psi:\rC^*(\M)\rightarrow B(\H_\pi)$ is a unital completely positive map satisfying $\psi\mid_\M = \pi\mid_\M$, then we have that $\psi = \pi$. If, in addition, $\pi$ is irreducible, then $\pi$ is said to be a \emph{boundary representation} for $\M$. Arveson's program, as well as many subsequent developments \cite{arveson1998subalgebras},\cite{arveson2011noncommutative},\cite{clouatre2021finite},\cite{clouatre2018multiplier},\cite{clouatre2020finite},\cite{clouatre2022minimal},\cite{davidson2019noncommutative},\cite{davidson2021choquet},\cite{davidson2022strongly},\cite{kakariadis2013vsilov},\cite{kennedy2015essential},\cite{kennedy2021noncommutative},\cite{kleski2014korovkin}, have demonstrated that boundary representations encode many underlying features of a unital operator space and may be thought of as a non-commutative Choquet boundary \cite{arveson2008noncommutative}. We take our main inspiration from one of these developments \cite{arveson2011noncommutative}.

Motivated by classical results of Korovkin and \v{S}a\v{s}kin, Arveson proposed a non-commutative approximation theory that is encoded in terms of boundary representations \cite{arveson2011noncommutative}. Initially, Korovkin had proven a rigidity result concerning sequences of positive maps with domain and codomain equal to $\rC[0,1]$. \v{S}a\v{s}kin had then demonstrated that Korovkin's result is an example of a particular unital operator space $\M\subset\rC[0,1]$ satisfying the property that all irreducible $*$-representations of $\rC[0,1]$ are boundary representations for $\M$ \cite{vsavskin1967mil}. Explicitly, \v{S}a\v{s}kin had showed that, given a compact metric space $X$, every irreducible $*$-representation for $\rC(X)$ is a boundary representation for a unital operator space $\M\subset\rC(X)$ if and only if, whenever $\psi_n:\rC(X)\rightarrow\rC(X)$ is a sequence of unital positive maps such that $\|\psi_n(g)-g\|\rightarrow0$ for every $g\in\M$, then we have that $\|\psi_n(f)-f\|\rightarrow0$ for every $f\in\rC(X)$. Decades later, Arveson provided a non-commutative adaptation of this phenomenon \cite[Theorem 2.1]{arveson2011noncommutative}. Given a separable unital operator space $\M\subset B(\H)$, Arveson showed that every $*$-representation of $\rC^*(\M)$ has the unique extension property with respect to $\M$ if and only if, whenever $\omega:\rC^*(\M)\rightarrow B(\K)$ is a faithful $*$-representation and $\psi_n: B(\K)\rightarrow B(\K)$ is a sequence of unital completely positive maps satisfying \[\lim_{n\rightarrow\infty}\| \psi_n(\omega(m))-\omega(m)\| = 0, \ \ \ \ \ \ \ m\in\M,\]then we necessarily have that \[ \lim_{n\rightarrow\infty}\| \psi_n(\omega(t))-\omega(t)\|= 0, \ \ \ \ \ \ \  t\in\rC^*(\M).\]A unital operator space $\M$ with the property that all $*$-representations of $\rC^*(\M)$ have the unique extension property is said to be \emph{hyperrigid}. Unfortunately, Arveson was unable to frame hyperrigidity in terms of irreducible $*$-representations. Accordingly, Arveson conjectured that every irreducible $*$-representation of $\rC^*(\M)$ is a boundary representation for a separable unital operator space $\M$ if and only if every $*$-representation of $\rC^*(\M)$ has the unique extension property with respect to $\M$. Despite significant effort \cite{arveson2011noncommutative},\cite{clouatre2018non},\cite{clouatre2018unperforated},\cite{clouatre2018multiplier},\cite{davidson2021choquet},\cite{katsoulis2020hyperrigidity},\cite{kennedy2015essential},\cite{kim2021hyperrigidity},\cite{kleski2014korovkin},\cite{salomon2019hyperrigid}, an answer to Arveson's hyperrigidity conjecture is still unknown, even in the commutative case. Indeed, the reader should note that it is not clear whether \v{S}a\v{s}kin's result coincides with the commutative case of Arveson's result due to the difference in ranges for the completely positive maps.

For the non-trivial direction of Arveson's conjecture, suppose that every irreducible $*$-representation of $\rC^*(\M)$ is a boundary representation for $\M$. It is well-known that the unique extension property is preserved under direct sums \cite[Proposition 4.4]{arveson2011noncommutative}. So, Voiculescu's Theorem implies that every $*$-representation of $\rC^*(\M)$ that acts on a separable Hilbert space is approximately unitarily equivalent to a $*$-representation with the unique extension property. If Arveson's conjecture were true, then these $*$-representations would necessarily have the unique extension property. Accordingly, the main purpose of our work is to study those $*$-representations that are approximately unitarily equivalent to a $*$-representation with the unique extension property.

In Section \ref{S:AUE_UEP}, we record preliminary information and basic observations regarding the $*$-representations of $\rC^*(\M)$ that are approximately unitarily equivalent to a $*$-representation with the unique extension property. In particular, we address whether \emph{irreducible} $*$-representations that are approximately unitarily equivalent to a boundary representation are necessarily boundary representations. In the literature, there are a couple of examples demonstrating that this question has a negative answer (Example \ref{E:CuntzInf} and surrounding discussion) and we record the details of one such example.

In Section \ref{S:ApproxUEP}, we study a variation on the unique extension property. Our first main development (Theorem \ref{T:UEPimplyAUEP}) shows that this variation is inherited among those $*$-representations that are approximately unitarily equivalent to a $*$-representation with the unique extension property.

\begin{theorem}\label{T:A}
Let $\M\subset B(\H)$ be a unital operator space and $\pi:\rC^*(\M)\rightarrow B(\H_\pi)$ be a $*$-representation that is approximately unitarily equivalent to a $*$-representation with the unique extension property. If $\psi:\rC^*(\M)\rightarrow B(\H_\pi)$ is a unital completely positive map satisfying $\psi\mid_\M = \pi\mid_\M$, then there is a net of unitary operators $u_\beta\in B(\H_\pi)$ such that $u_\beta^*\psi(t)u_\beta\rightarrow\pi(t)$ in the weak operator topology for every $t\in\rC^*(\M)$.
\end{theorem}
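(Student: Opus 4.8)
The plan is to prove the statement in two movements: first I would transfer the problem from $\pi$ to a representation $\rho$ possessing the unique extension property, where I establish an \emph{approximate} form of that property; then I would transfer the resulting convergence back to $\pi$ by recombining the implementing unitaries.

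Fix a net of unitaries $v_\gamma:\H_\rho\to\H_\pi$ witnessing the approximate unitary equivalence of $\pi$ with a representation $\rho$ that has the unique extension property, so that $\|v_\gamma^*\pi(t)v_\gamma-\rho(t)\|\to0$ for every $t\in\rC^*(\M)$. Set $\psi_\gamma:=v_\gamma^*\psi v_\gamma:\rC^*(\M)\to B(\H_\rho)$, which is again unital and completely positive. Because $\psi\mid_\M=\pi\mid_\M$, for $m\in\M$ we have $\psi_\gamma(m)=v_\gamma^*\pi(m)v_\gamma\to\rho(m)$ in norm.

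The crux is to promote this to $\psi_\gamma\to\rho$ in the point-\wot{} topology. Here I would use that the unital completely positive maps from $\rC^*(\M)$ into $B(\H_\rho)$ form a point-\wot{} compact set: for each $t$ the values lie in the \wot{}-compact ball of radius $\|t\|$, the product of these balls is compact by Tychonoff, and unitality, linearity, and complete positivity all survive point-\wot{} limits (a \wot{} limit of positive operators is positive, applied to every matrix amplification). Given any point-\wot{} cluster point $\phi$ of $(\psi_\gamma)$, the norm convergence on $\M$ forces $\phi\mid_\M=\rho\mid_\M$, whence the unique extension property of $\rho$ gives $\phi=\rho$. Since the ambient space is compact and every cluster point equals $\rho$, the net itself converges: $v_\gamma^*\psi(t)v_\gamma\to\rho(t)$ in the \wot{} for each $t$.

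It remains to return to $\pi$. For a fixed index $\delta$, conjugating the previous limit by the fixed unitary $v_\delta$ yields $v_\delta v_\gamma^*\psi(t)v_\gamma v_\delta^*\to v_\delta\rho(t)v_\delta^*$ in the \wot{} as $\gamma$ runs, while the approximate unitary equivalence gives $\|v_\delta\rho(t)v_\delta^*-\pi(t)\|\to0$ as $\delta$ runs. Thus $\pi(t)$ is an iterated limit of $w_{\gamma,\delta}^*\psi(t)w_{\gamma,\delta}$, where $w_{\gamma,\delta}:=v_\gamma v_\delta^*$ is a unitary on $\H_\pi$. Concretely, given finitely many $t_1,\dots,t_k$, finitely many vector pairs, and $\epsilon>0$, I would first choose $\delta$ making $\|v_\delta\rho(t_i)v_\delta^*-\pi(t_i)\|$ small in norm, then, with $\delta$ frozen, choose $\gamma$ making the finitely many matrix coefficients of $w_{\gamma,\delta}^*\psi(t_i)w_{\gamma,\delta}$ close to those of $v_\delta\rho(t_i)v_\delta^*$; the triangle inequality then controls the coefficients against $\pi(t_i)$. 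Indexing these choices by the directed set of such finite data produces the desired net $u_\beta$ of unitaries on $\H_\pi$ with $u_\beta^*\psi(t)u_\beta\to\pi(t)$ in the \wot{} for every $t$.

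I expect the middle step, the approximate unique extension property of $\rho$, to be the main obstacle: one must verify that complete positivity and unitality are genuinely preserved under point-\wot{} limits so that the true unique extension property can be invoked on the cluster point, and that the compactness argument legitimately upgrades ``every cluster point is $\rho$'' to convergence of the whole net. The final recombination is then a soft diagonalization via iterated limits, requiring only that \wot{} convergence is stable under conjugation by a fixed unitary.
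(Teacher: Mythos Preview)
Your argument is correct and follows essentially the same route as the paper: transfer $\psi$ to the side of the representation with the unique extension property via the implementing unitaries, use point-\textsc{wot} compactness of unital completely positive maps together with the unique extension property to identify the limit, and then recombine by iterated limits. The only minor variation is that you invoke the ``unique cluster point in a compact space forces convergence'' principle to obtain convergence of the whole net $(\psi_\gamma)$, whereas the paper extracts a single convergent subnet; this is a cosmetic simplification rather than a different idea.
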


We say that a $*$-representation of $\rC^*(\M)$ that satisfies the conclusion of Theorem \ref{T:A} has the \emph{approximate unique extension property} with respect to $\M$. Throughout Section \ref{S:ApproxUEP}, we study the class of $*$-representations with the approximate unique extension property and develop a collection of structural properties that are satisfied by these $*$-representations (Propositions \ref{P:AUEP_aue}, \ref{P:AUEPSum}, and \ref{P:AUEPker}). Consequently, this allows us to relate our work to Arveson's hyperrigidity conjecture (Theorems \ref{T:AUESummary} and \ref{T:Compact}). For this, recall that a $\rC^*$-algebra $\fA$ is \emph{postliminal} if the image of every irreducible $*$-representation for $\fA$ contains the ideal of compact operators.

\begin{theorem}\label{T:B}
Let $\M\subset B(\H)$ be a separable unital operator space. Consider the following statements:\begin{enumerate}[\rm (i)]
\item The operator space $\M$ is hyperrigid.
\item Every irreducible $*$-representation of $\rC^*(\M)$ is a boundary representation.
\item Every irreducible $*$-representation of $\rC^*(\M)$ is approximately unitarily equivalent to a boundary representation.
\item Every $*$-representation of $\rC^*(\M)$ has the approximate unique extension property.
\end{enumerate}Then, we have that {\rm (i)}$\Rightarrow${\rm (ii)}$\Rightarrow${\rm (iii)}$\Rightarrow${\rm (iv)} and {\rm (iii)}$\not\Rightarrow${\rm (ii)}. Moreover, if $\rC^*(\M)$ is postliminal, then we have that {\rm (iv)}$\Rightarrow${\rm (ii)}.
\end{theorem}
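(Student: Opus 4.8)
I would first dispatch the formal implications. For \textbf{(i)$\Rightarrow$(ii)}, hyperrigidity asserts that every $*$-representation of $\rC^*(\M)$ has the unique extension property, so in particular every \emph{irreducible} one does, and an irreducible representation with the unique extension property is by definition a boundary representation. For \textbf{(ii)$\Rightarrow$(iii)} there is nothing to prove, since every representation is trivially approximately unitarily equivalent to itself. The non-implication \textbf{(iii)$\not\Rightarrow$(ii)} I would establish by pointing to Example~\ref{E:CuntzInf}, which exhibits a separable $\M$ for which every irreducible representation of $\rC^*(\M)$ is approximately unitarily equivalent to a boundary representation while some irreducible representation nonetheless fails to be one.

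For \textbf{(iii)$\Rightarrow$(iv)} the plan is to reduce to Theorem~\ref{T:A}: it suffices to show, granting (iii), that every $*$-representation $\pi$ of $\rC^*(\M)$ is approximately unitarily equivalent to a $*$-representation with the unique extension property. Since $\M$ is separable, so is $\rC^*(\M)$, and Voiculescu's Theorem presents $\pi$ (acting on a separable space) as approximately unitarily equivalent to a direct sum $\bigoplus_i \pi_i$ of irreducible representations. By (iii) each $\pi_i$ is approximately unitarily equivalent to a boundary representation $\rho_i$, so by Hadwin's Theorem $\mathrm{rank}(\pi_i(t)) = \mathrm{rank}(\rho_i(t))$ for every $t$; summing these equalities over $i$ and applying Hadwin's Theorem again yields that $\bigoplus_i \pi_i$ is approximately unitarily equivalent to $\bigoplus_i \rho_i$. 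The latter has the unique extension property, since each $\rho_i$ does and the property is preserved under direct sums \cite{arveson2011noncommutative}, so transitivity finishes the reduction and Theorem~\ref{T:A} supplies the approximate unique extension property for $\pi$. (This packaging is precisely what the closure properties in Propositions~\ref{P:AUEP_aue} and \ref{P:AUEPSum} encode, and routing the argument through them also absorbs the cardinality bookkeeping needed for non-separable representations.)

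The heart of the theorem is \textbf{(iv)$\Rightarrow$(ii)} under postliminality, which I would prove by contraposition. Suppose some irreducible $\pi:\rC^*(\M)\to B(\H_\pi)$ is \emph{not} a boundary representation. Postliminality provides the ideal $\mathcal{K}(\H_\pi)$ of compact operators inside $\pi(\rC^*(\M))$, so Arveson's boundary theorem \cite{arveson1969subalgebras} applies: the failure of the unique extension property forces the quotient map $q:\pi(\rC^*(\M))\to\pi(\rC^*(\M))/\mathcal{K}(\H_\pi)=:\fB$ to be completely isometric on $\pi(\M)$. Then $q\circ\pi|_\M$ is a complete isometry, and its inverse $q(\pi(m))\mapsto\pi(m)$ is a unital complete isometry which, after passing to the generated operator system and invoking Arveson's extension theorem, extends to a unital completely positive map $\Lambda:\fB\to B(\H_\pi)$. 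Setting $\psi:=\Lambda\circ q\circ\pi$, we obtain a unital completely positive map with $\psi|_\M=\pi|_\M$ that annihilates the compacts: choosing $t_0\in\rC^*(\M)$ with $\pi(t_0)$ a nonzero rank-one projection $P$, we have $q(\pi(t_0))=0$, hence $\psi(t_0)=0\neq P=\pi(t_0)$. For any net of unitaries $u_\beta$ we then have $u_\beta^*\psi(t_0)u_\beta=0$, which cannot converge weakly to $P\neq0$; thus $\pi$ fails the approximate unique extension property, contradicting (iv).

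The main obstacle is this last step: from the \emph{mere} failure of the unique extension property one must manufacture a unital completely positive extension of $\pi|_\M$ that is detectably far from $\pi$ in the weak topology of the unitary orbit. Postliminality is exactly what makes this possible, as it supplies the compacts in $\pi(\rC^*(\M))$ and hence the quotient $\fB$ through which the constructed $\psi$ factors; the vanishing of $\psi$ on the compacts is what obstructs weak approximate unitary equivalence to the irreducible $\pi$. The technical points demanding care are the verification of the hypotheses of the boundary theorem, the complete isometry of $q$ on $\pi(\M)$, and—when $\M$ is not self-adjoint—the legitimization of the extension $\Lambda$ through the associated operator system.
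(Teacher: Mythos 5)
Your treatment of (i)$\Rightarrow$(ii)$\Rightarrow$(iii), of (iii)$\not\Rightarrow$(ii) via Example~\ref{E:CuntzInf}, and of (iii)$\Rightarrow$(iv) is correct and essentially the paper's argument: the paper packages the Voiculescu--Hadwin reduction as Proposition~\ref{P:AUEPHR} (first decomposing an arbitrary $\pi$ into summands acting on separable Hilbert spaces, which is the step your parenthetical gestures at, since $\H_\pi$ itself need not be separable) and then invokes Theorem~\ref{T:UEPimplyAUEP} together with Propositions~\ref{P:AUEP_aue} and~\ref{P:AUEPSum}.

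The gap is in (iv)$\Rightarrow$(ii). Arveson's boundary theorem, applied to the irreducible subspace $\pi(\M)\subset B(\H_\pi)$ with $\fK(\H_\pi)\subset\pi(\rC^*(\M))$, characterizes when the \emph{identity representation of $\pi(\rC^*(\M))$} is a boundary representation for $\pi(\M)$; that is, it governs unital completely positive maps defined on $\pi(\rC^*(\M))$ that restrict to the identity on $\pi(\M)$. Your contrapositive hypothesis is the a priori weaker statement that $\pi$ fails to be a boundary representation for $\M$, which concerns unital completely positive maps on $\rC^*(\M)$ extending $\pi\mid_\M$. When $\ker\pi\neq\{0\}$ such a map need not factor through $\pi$, and then you cannot conclude that $q$ is completely isometric on $\pi(\M)$, so the extension $\psi=\Lambda\circ q\circ\pi$ killing the compacts need not exist. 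Concretely, take $\rC^*(\M)=\rC[0,1]$, $\M=\operatorname{span}\{1,x\}$ and $\pi$ the character of evaluation at $1/2$: here $\fK(\H_\pi)=\bC\subset\text{Im}\,\pi$ and $\pi$ is not a boundary representation (the average of the two endpoint evaluations agrees with $\pi$ on $\M$ but not on $x^2$), yet $\pi(\rC^*(\M))/\fK(\H_\pi)=0$, so $q$ is certainly not completely isometric on $\pi(\M)$ and your construction produces nothing. This does not contradict the theorem, but it does refute the step ``$\pi$ not a boundary representation $\Rightarrow$ $q$ completely isometric on $\pi(\M)$.'' The paper's proof avoids the issue entirely: given \emph{any} unital completely positive extension $\psi$ of $\pi\mid_\M$, the approximate unique extension property yields (Theorem~\ref{T:AUEPsequence}) a homomorphic conditional expectation $\Gamma_\psi$ onto $\text{Im}\,\pi$ with $\Gamma_\psi\circ\psi=\pi$; a $*$-homomorphism fixing $\fK(\H_\pi)$ pointwise must be the identity, whence $\psi=\pi$. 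This reaches extensions that do not factor through $\pi$, which your boundary-theorem route cannot, and repairing your argument would essentially amount to reproving Theorem~\ref{T:AUEPsequence}.
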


Since {\rm (iii)}$\not\Rightarrow${\rm (ii)} in general, our results will imply all of the same restrictions to a class of unital operator spaces that do not satisfy the property that every irreducible $*$-representation is a boundary representation. Thus, these operator spaces have the property that the space of completely positive extensions for every $*$-representation is incredibly restrictive, yet there are irreducible $*$-representations that fail to be boundary representations (Example \ref{E:CuntzInf}).

We now unveil the main result of our work, which provides a restriction on the space of completely positive extensions of $*$-representations with the approximate unique extension property (Theorems \ref{T:AUEPsequence} and \ref{T:GammaUEP}).

\begin{theorem}\label{T:C}
Let $\M\subset B(\H)$ be a unital operator space and $\pi:\rC^*(\M)\rightarrow B(\H_\pi)$ be a $*$-representation that possesses the approximate unique extension property. Suppose that $\psi:\rC^*(\M)\rightarrow B(\H_\pi)$ is a unital completely positive map satisfying $\psi\mid_\M = \pi\mid_\M$. Then, the following statements hold.\begin{enumerate}[\rm (i)]
\item There is a homomorphic conditional expectation $\Gamma_\psi:\rC^*(\text{Im}\psi)\rightarrow \text{Im}\pi$ satisfying $\Gamma_\psi\circ\psi = \pi$.
\item The conditional expectation $\Gamma_\psi$ has the unique extension property with respect to $\text{Im}\psi$.
\item We have that $\ker\Gamma_\psi$ is the smallest closed two-sided ideal of $\rC^*(\text{Im}\psi)$ that contains $\text{Im}(\psi-\pi)$. In particular, $\psi = \pi$ if and only if $\Gamma_\psi$ is isometric.
\end{enumerate}
\end{theorem}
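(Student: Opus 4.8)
The plan is to build $\Gamma_\psi$ as a limit of conjugations along the net granted by the approximate unique extension property of $\pi$. Applying that property to $\psi$ yields unitaries $u_\beta\in B(\H_\pi)$ with $u_\beta^*\psi(t)u_\beta\to\pi(t)$ weakly for every $t\in\rC^*(\M)$, and the essential first step---and the main obstacle---is to upgrade this to \emph{strong} convergence. Fix $t$ and a unit vector $\xi$, and set $w_\beta=u_\beta^*\psi(t)u_\beta$. Since conjugation by a unitary preserves order, the Kadison--Schwarz inequality $\psi(t)^*\psi(t)\le\psi(t^*t)$ gives $\|w_\beta\xi\|^2\le\langle u_\beta^*\psi(t^*t)u_\beta\,\xi,\xi\rangle$, and the right-hand side tends to $\langle\pi(t^*t)\xi,\xi\rangle=\|\pi(t)\xi\|^2$ because $\pi$ is multiplicative and the same net also governs the element $t^*t$. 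Hence $\limsup_\beta\|w_\beta\xi\|\le\|\pi(t)\xi\|$, while weak lower semicontinuity of the norm gives $\|\pi(t)\xi\|\le\liminf_\beta\|w_\beta\xi\|$. Therefore $\|w_\beta\xi\|\to\|\pi(t)\xi\|$, and combined with $w_\beta\xi\to\pi(t)\xi$ weakly this forces norm convergence. I conclude that $u_\beta^*\psi(t)u_\beta\to\pi(t)$ strongly for every $t$.

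With strong convergence in hand I would define $\Gamma_\psi(x)=\lim_\beta u_\beta^* x u_\beta$ (strong limit) on $\rC^*(\text{Im}\psi)$. Each $\operatorname{Ad}u_\beta^*$ is a unital $*$-homomorphism, and products of uniformly bounded strongly convergent nets converge strongly, so the limit exists and satisfies $\Gamma_\psi(\psi(t_1)\cdots\psi(t_k))=\pi(t_1)\cdots\pi(t_k)$ on products; it extends by linearity and, since the maps $\operatorname{Ad}u_\beta^*$ are isometric, by an $\varepsilon/3$ argument to all of $\rC^*(\text{Im}\psi)$, yielding a $*$-homomorphism with $\Gamma_\psi\circ\psi=\pi$. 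Because $\pi(\M)=\psi(\M)\subseteq\text{Im}\psi$ we have $\text{Im}\pi=\rC^*(\pi(\M))\subseteq\rC^*(\text{Im}\psi)$, so the product formula shows $\Gamma_\psi$ maps into $\text{Im}\pi$; and $\Gamma_\psi(\pi(m))=\pi(m)$ for $m\in\M$ together with multiplicativity shows $\Gamma_\psi$ fixes $\text{Im}\pi$ pointwise. Thus $\Gamma_\psi$ is an idempotent $*$-homomorphism onto $\text{Im}\pi$, i.e.\ a homomorphic conditional expectation, which proves (i).

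For (ii) I would take any unital completely positive $\rho:\rC^*(\text{Im}\psi)\to B(\H_\pi)$ with $\rho|_{\text{Im}\psi}=\Gamma_\psi|_{\text{Im}\psi}$ and run a multiplicative-domain argument. Since $\psi(t)\in\text{Im}\psi$ we have $\rho(\psi(t))=\pi(t)$, so Kadison--Schwarz gives the lower bound $\rho(\psi(t)^*\psi(t))\ge\pi(t^*t)$. For the matching upper bound, the defect $d_t=\psi(t^*t)-\psi(t)^*\psi(t)\ge0$ lies in $\rC^*(\text{Im}\psi)$, and positivity of $\rho$ together with $\rho(\psi(t^*t))=\Gamma_\psi(\psi(t^*t))=\pi(t^*t)$ gives $\rho(\psi(t)^*\psi(t))=\pi(t^*t)-\rho(d_t)\le\pi(t^*t)$. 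Hence $\rho(\psi(t)^*\psi(t))=\rho(\psi(t))^*\rho(\psi(t))$, and symmetrically $\rho(\psi(t)\psi(t)^*)=\rho(\psi(t))\rho(\psi(t))^*$, so each $\psi(t)$ lies in the multiplicative domain of $\rho$. As the multiplicative domain is a $\rC^*$-subalgebra it contains $\rC^*(\text{Im}\psi)$; thus $\rho$ is a $*$-homomorphism agreeing with $\Gamma_\psi$ on the generating operator system $\text{Im}\psi$, whence $\rho=\Gamma_\psi$.

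Finally, for (iii): applying $\Gamma_\psi$ to $(\psi-\pi)(t)$ and using $\Gamma_\psi\circ\psi=\pi$ and $\Gamma_\psi|_{\text{Im}\pi}=\id$ shows $\text{Im}(\psi-\pi)\subseteq\ker\Gamma_\psi$, so the smallest closed two-sided ideal $J$ containing $\text{Im}(\psi-\pi)$ satisfies $J\subseteq\ker\Gamma_\psi$. For the reverse inclusion I would pass to $q:\rC^*(\text{Im}\psi)\to\rC^*(\text{Im}\psi)/J$; since $q(\psi(t))=q(\pi(t))$ the quotient equals $q(\text{Im}\pi)$, and $\Gamma_\psi$ factors as $\widetilde\Gamma\circ q$ with $\widetilde\Gamma\circ q|_{\text{Im}\pi}=\id_{\text{Im}\pi}$. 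Hence $q|_{\text{Im}\pi}$ surjects onto the quotient and admits a left inverse, so it is an isomorphism, forcing $\widetilde\Gamma$ to be injective and $\ker\Gamma_\psi=J$. The last assertion is then immediate: if $\Gamma_\psi$ is isometric it is injective, so $\ker\Gamma_\psi=0$ and $\text{Im}(\psi-\pi)\subseteq\ker\Gamma_\psi=0$ gives $\psi=\pi$, while conversely $\psi=\pi$ makes $\rC^*(\text{Im}\psi)=\text{Im}\pi$ and $\Gamma_\psi=\id$. The crux of the whole argument is the strong-convergence upgrade in the first step, since once the conjugations converge strongly the homomorphism $\Gamma_\psi$ essentially writes itself.
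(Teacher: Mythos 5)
Your proposal is correct, and for parts (i) and (ii) it follows essentially the same route as the paper: build $\Gamma_\psi$ as the strong limit of $\operatorname{Ad}u_\beta^*$ over the dense span of words in $\text{Im}\psi$, then run a Schwarz-inequality/multiplicative-domain argument to get the unique extension property of $\Gamma_\psi$. The one substantive addition in (i) is that you prove the weak-to-strong upgrade by hand (the $\limsup$/$\liminf$ squeeze on $\|w_\beta\xi\|$ plus weak convergence forcing norm convergence); the paper simply cites Lemma 2.1 of Kleski's paper for this, and your inline argument is exactly the standard proof of that lemma, so nothing is lost. Where you genuinely diverge is in (iii): to show $\ker\Gamma_\psi\subseteq\fJ$, the paper proves by induction on the length of words that every element of $\rC^*(\text{Im}\psi)$ decomposes as $J+\pi(t)$ with $J\in\fJ$, then applies $\Gamma_\psi$ to such a decomposition of a kernel element. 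You instead pass to the quotient $q:\rC^*(\text{Im}\psi)\to\rC^*(\text{Im}\psi)/\fJ$, observe that $q\circ\psi=q\circ\pi$ forces the quotient to equal $q(\text{Im}\pi)$, and use the factorization $\Gamma_\psi=\widetilde\Gamma\circ q$ with $\widetilde\Gamma$ a left inverse of $q|_{\text{Im}\pi}$ to conclude $\widetilde\Gamma$ is injective. This is cleaner and avoids the combinatorics of words; the only thing the paper's explicit decomposition buys additionally is the identity $\rC^*(\text{Im}\psi)=\ker\Gamma_\psi+\text{Im}\pi$ recorded as a separate statement and reused later, but your quotient argument yields that identity just as readily.
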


As a consequence, Theorem \ref{T:C} demonstrates that there is a split exact sequence of $\rC^*$-algebras given by\[\begin{tikzcd}
0 \arrow{r} & \langle\langle\text{Im}(\psi-\pi)\rangle\rangle \arrow{r} & \rC^*(\text{Im}\psi) \arrow{r} & \text{Im}\pi \arrow{r} & 0
\end{tikzcd}\]where the second term is the smallest closed two-sided ideal of $\rC^*(\text{Im}\psi)$ that contains $\text{Im}(\psi-\pi)$. The subsequent results in Section \ref{S:SplitSeq} are consequences to the above developments. In particular, this allows us to apply our machinery to obtain new results on the structure of completely positive extensions of arbitrary isometric $*$-representations of the $\rC^*$-envelope (Corollary \ref{C:AUEPisom}).

In Section \ref{S:Examples}, we consider two classes of concrete examples where our results may apply. The first is a reformulation of \v{S}a\v{s}kin's Theorem (Proposition \ref{P:Saskin}) that addresses commutativity and the approximate unique extension property. For the second application, note that Voiculescu's Theorem and Arveson's program guarantee that the collection of $*$-representations that possess the approximate unique extension property is always quite large. Therefore, one would expect that requiring a single $*$-representation to have the approximate unique extension property to be a weak assumption. Nevertheless, we apply our machinery to operator systems arising from quotient modules of multipliers on the Drury-Arveson space and find that hyperrigidity is encoded in a single $*$-representation possessing the approximate unique extension property (Theorem \ref{T:HREssNorm}). Due to work of Kennedy and Shalit \cite{kennedy2015essential}, our development allows us to reframe the essential normality conjecture of Arveson.

{\bf Acknowledgements.} The author would like to thank their advisor, Rapha\"{e}l Clou\^{a}tre, for many helpful discussions during the preparation of this manuscript.

\section{Approximate unitary equivalence and the unique extension property}\label{S:AUE_UEP}

In this section, we present a few preliminary facts as well as the fundamental relationship between the unique extension property and its preservation under approximate unitary equivalence. The results within Subsections \ref{SS:aue_uep} and \ref{SS:asymp_uep} are likely known to experts, but a condensed treatment is unavailable.

\subsection{Preliminaries}\label{SS:prelim}

First, we record some well-known facts in the literature. We start with a central tool for determining when two $*$-representations are approximately unitarily equivalent. This was first shown in \cite[Theorem 3.14]{hadwin1981nonseparable}.

\begin{theorem}\label{T:HadwinAUE}
Let $\fA$ be a $\rC^*$-algebra. Suppose that $\pi:\fA\rightarrow B(\H_\pi)$ and $\sigma:\fA\rightarrow B(\H_\sigma)$ are $*$-representations. Then, the following statements are equivalent.\begin{enumerate}[\rm (i)]
\item The $*$-representations $\pi$ and $\sigma$ are approximately unitarily equivalent.
\item We have that $\text{rank}(\pi(t)) = \text{rank}(\sigma(t))$ for every $t\in\fA$.
\end{enumerate}
\end{theorem}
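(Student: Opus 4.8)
The plan is to prove the two implications separately, with essentially all of the substance lying in (ii)$\Rightarrow$(i).

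\emph{The direction} (i)$\Rightarrow$(ii). This I would deduce from two elementary facts about rank: it is invariant under unitary conjugation, and it is suitably lower semicontinuous in norm. Suppose $u_\beta:\H_\sigma\rightarrow\H_\pi$ are unitaries with $\|u_\beta^*\pi(t)u_\beta-\sigma(t)\|\rightarrow0$ for each $t$, and fix $t\in\fA$. Because the net converges genuinely in norm, for each $m$ I may choose an index $\beta_m$ so that the operators $S_m:=u_{\beta_m}^*\pi(t)u_{\beta_m}$ satisfy $\|S_m-\sigma(t)\|<1/m$; thus $S_m\rightarrow\sigma(t)$ along an ordinary sequence, while each $S_m$ has rank exactly $\text{rank}(\pi(t))$. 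Now $\overline{\ran\,\sigma(t)}\subseteq\overline{\bigcup_m\ran\,S_m}$, whose Hilbert dimension is at most $\aleph_0\cdot\text{rank}(\pi(t))$; when $\text{rank}(\pi(t))$ is finite I would instead use that $\{T:\text{rank}(T)\le r\}$ is norm closed. Either way $\text{rank}(\sigma(t))\le\text{rank}(\pi(t))$, and conjugating the net the other way gives $\pi(t)=\lim_\beta u_\beta\sigma(t)u_\beta^*$ and hence the reverse inequality, so the ranks agree as cardinals. This extraction of a sequence from the net is precisely what keeps the cardinal bookkeeping harmless here, in keeping with the remark preceding the statement.

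\emph{The direction} (ii)$\Rightarrow$(i). Here the rank function must be turned into the approximating unitaries, which is the hard part. First I would observe that the rank function detects the kernel, since $\text{rank}(\pi(t))=0$ exactly when $t\in\ker\pi$; hence (ii) forces $\ker\pi=\ker\sigma$, and after passing to the common quotient $\fA/\ker\pi$ I may assume both representations faithful. The remaining task is to match the spatial data of $\pi$ and $\sigma$, and the engine for this is Voiculescu's absorption theorem together with its underlying approximation lemma: for a separable $\rC^*$-algebra, a unital completely positive map into a matrix algebra that annihilates the compact ideal seen by a faithful representation can be approximated, on any finite set and to within any tolerance, by compressions of that representation to finite-rank projections, with compact error. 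The way rank enters is decisive, as it pins down the multiplicities of the atomic part: for an irreducible summand $\rho$ whose image meets $\K(\H_\rho)$ one selects $t$ with $\rho(t)$ of rank one while $\rho'(t)=0$ for inequivalent atoms $\rho'$, so that $\text{rank}(\pi(t))$ literally counts the multiplicity of $\rho$ in $\pi$. Thus equality of the rank functions forces the atomic parts of $\pi$ and $\sigma$ to have matching multiplicities, while their non-atomic parts, being mutually weakly contained because the kernels coincide, are absorbed into one another by Voiculescu's theorem. Assembling these ingredients over the directed set of pairs $(F,\varepsilon)$ with $F\subseteq\fA$ finite and $\varepsilon>0$ yields the net of unitaries; the passage from the approximate isometries furnished by the absorption lemma to genuine \emph{unitaries} is exactly where rank equality, rather than mere weak containment, is indispensable, since it guarantees that the dimension counts on the two sides agree.

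\emph{The main obstacle.} The real difficulty, and Hadwin's contribution beyond Voiculescu, is that neither $\fA$ nor $\H_\pi,\H_\sigma$ need be separable. I would treat each local problem---fixed finite $F$ and tolerance $\varepsilon$---by restricting to the separable $\rC^*$-subalgebra $\fA_0\subseteq\fA$ generated by $F$ and to separable reducing subspaces carrying the relevant rank data; since the rank functions of $\pi$ and $\sigma$ restricted to $\fA_0$ still agree, the separable theory applies there, and the local unitaries are then extended and patched across the net. The delicate point is maintaining consistent control of infinite multiplicities as cardinals throughout this patching, which is where the intricacies in capturing cardinality alluded to before the statement reside. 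As noted there, however, the applications in this paper require only equality of the rank functions as a bookkeeping invariant and not a fine analysis of these cardinals, so for our purposes it suffices to invoke the theorem in the stated form.
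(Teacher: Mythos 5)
The first thing to note is that the paper offers no proof of this statement: Theorem \ref{T:HadwinAUE} is recorded as a known preliminary fact and attributed to Hadwin, with the citation \cite[Theorem 3.14]{hadwin1981nonseparable} standing in for any argument. So there is no in-paper proof to compare yours against; the only question is whether your sketch stands on its own.

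Your direction (i)$\Rightarrow$(ii) is correct and essentially complete: unitary invariance of rank, the inclusion $\ran\sigma(t)\subseteq\overline{\bigcup_m\ran S_m}$ giving the cardinal bound in the infinite-rank case, norm-closedness of $\{T:\operatorname{rank}(T)\le r\}$ in the finite-rank case, and symmetry. For (ii)$\Rightarrow$(i), your outline is a fair roadmap of the \emph{separable} case (which is really a corollary of Voiculescu's theorem: equal kernels, rank-counting to match multiplicities of the irreducible summands meeting the compacts, absorption of the remainders). But the nonseparable case --- which is precisely Hadwin's contribution and precisely the generality this paper uses, since it applies the theorem to amplifications $\pi^{(\kappa)}$ for large cardinals $\kappa$ --- is where your argument has a genuine gap. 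Restricting to a separable subalgebra $\fA_0$ and to separable reducing subspaces does not preserve the hypothesis: if $\operatorname{rank}(\pi(t))$ is uncountable, every separable compression has rank at most $\aleph_0$, so the rank functions of the restricted representations need not agree, and a unitary built between separable pieces gives no control over what happens on the orthogonal complements when you try to patch. Your closing move --- that it ``suffices to invoke the theorem in the stated form'' --- is circular if offered as a proof. Since the paper itself treats the result as a black box, the honest repair is to present (i)$\Rightarrow$(ii) as an argument and (ii)$\Rightarrow$(i) as a citation to Hadwin, rather than as a proof sketch whose hardest step is deferred to the theorem being proved.
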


Next, we record a few basic facts regarding $*$-representations with the unique extension property.

\begin{lemma}\label{L:UEP}
Let $\M\subset B(\H)$ be a unital operator space and $\{ \pi_i: \rC^*(\M)\rightarrow B(\H_i) : i\in I\}$ be a collection of $*$-representations. Then, the following statements are equivalent.\begin{enumerate}[\rm (i)]
\item For each $i\in I$, $\pi_i$ has the unique extension property with respect to $\M$.
\item We have that $\bigoplus_{i\in I}\pi _i$ has the unique extension property with respect to $\M$.
\end{enumerate} 
\end{lemma}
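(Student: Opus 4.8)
The implication (ii)$\Rightarrow$(i) is the routine direction, so the plan is to dispatch it first. Fix $j \in I$ and suppose $\psi_j : \rC^*(\M)\rightarrow B(\H_j)$ is a unital completely positive map with $\psi_j\mid_\M = \pi_j\mid_\M$. Then $\psi := \psi_j \oplus \bigoplus_{i\neq j}\pi_i$ is again unital completely positive, being a direct sum of such maps, and it agrees with $\bigoplus_{i\in I}\pi_i$ on $\M$. Invoking the unique extension property of $\bigoplus_{i\in I}\pi_i$ forces $\psi = \bigoplus_{i\in I}\pi_i$, and comparing the $j$-th summand gives $\psi_j = \pi_j$. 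Hence each $\pi_j$ has the unique extension property.

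For the substantive direction (i)$\Rightarrow$(ii), I would write $\pi = \bigoplus_{i\in I}\pi_i$ acting on $\K = \bigoplus_{i\in I}\H_i$, let $P_i$ be the orthogonal projection onto $\H_i$, and take $\psi : \rC^*(\M)\rightarrow B(\K)$ unital completely positive with $\psi\mid_\M = \pi\mid_\M$. The first step is to pin down the diagonal blocks. For each $i$ the compression $\psi_i(t) := P_i\psi(t)P_i$, viewed as a map into $B(\H_i)$, is unital completely positive, and since $\psi(m) = \pi(m)$ is block diagonal for $m\in\M$ we have $\psi_i\mid_\M = \pi_i\mid_\M$. The hypothesis that $\pi_i$ has the unique extension property then yields $\psi_i = \pi_i$; that is, $P_i\psi(t)P_i = P_i\pi(t)P_i$ for every $t\in\rC^*(\M)$ and every $i$.

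The crux is to promote this diagonal information to genuine equality, and here I would lean on the Schwarz inequality together with a positivity argument. For $m\in\M$ the Schwarz inequality gives $\psi(m^*m) \geq \psi(m)^*\psi(m) = \pi(m)^*\pi(m) = \pi(m^*m)$, so $X := \psi(m^*m) - \pi(m^*m) \geq 0$. By the previous step every diagonal block $P_i X P_i$ vanishes, whence $\langle X\xi,\xi\rangle = 0$ for each $\xi\in\H_i$ and each $i$; positivity of $X$ forces $X^{1/2}\xi = 0$, so $X$ annihilates the dense algebraic direct sum of the $\H_i$ and therefore $X = 0$. The same argument applied to $m^*\in\M^*$, noting $\psi(m^*) = \psi(m)^* = \pi(m)^*$ since $\psi$ is $*$-linear, yields $\psi(mm^*) = \pi(mm^*)$. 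Consequently $\psi(m^*m) = \psi(m)^*\psi(m)$ and $\psi(mm^*) = \psi(m)\psi(m)^*$ for all $m\in\M$, which is precisely the statement that $\M\cup\M^*$ lies in the multiplicative domain $\D_\psi$ of $\psi$. Since $\D_\psi$ is a $\rC^*$-subalgebra on which $\psi$ restricts to a $*$-homomorphism, and $\M\cup\M^*$ generates $\rC^*(\M)$, I would conclude $\D_\psi = \rC^*(\M)$; thus $\psi$ is a $*$-homomorphism agreeing with $\pi$ on the generators, giving $\psi = \pi$ on the dense $*$-subalgebra generated by $\M$ and then everywhere by continuity.

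The only genuinely delicate point is the positivity step: knowing the diagonal blocks of the positive operator $X$ vanish is exactly what collapses its off-diagonal entries, and this is where the direct-sum structure is essential. Everything else is either the formal unique-extension bookkeeping of the first two steps or the standard multiplicative-domain machinery; equivalently, one may phrase the final step as the observation that saturating the Schwarz inequality forces the Stinespring isometry of $\psi$ to intertwine $\psi$ with its minimal dilation, a relation that then propagates across products of generators. I expect no obstruction in carrying these out.
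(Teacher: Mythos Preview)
Your argument is correct. The paper itself does not give a proof of this lemma; it simply cites \cite[Proposition 4.4]{arveson2011noncommutative} and \cite[Lemma 2.8]{clouatre2020finite}, so your write-up supplies what the paper omits.

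It is worth noting that the argument in Arveson's cited proposition, a variant of which is reproduced later in this very paper in the proof of Proposition~\ref{P:AUEPSum}, is organised a little differently from yours. After establishing $P_i\psi(t)P_i = \pi_i(t)$ for every $t\in\rC^*(\M)$, Arveson applies the Schwarz inequality directly to an arbitrary $t$ (not just $m\in\M$) in the form
\[
P_i\psi(t)^*(I-P_i)\psi(t)P_i \leq P_i\psi(t^*t)P_i - P_i\psi(t)^*P_i\psi(t)P_i = \pi_i(t^*t) - \pi_i(t)^*\pi_i(t) = 0,
\]
which kills the off-diagonal blocks $(I-P_i)\psi(t)P_i$ in one stroke and gives $\psi=\pi$ immediately. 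Your route instead restricts first to $m\in\M$, uses the positivity-plus-vanishing-diagonal trick to force $\psi(m^*m)=\pi(m^*m)$, and then appeals to the multiplicative-domain machinery to propagate to all of $\rC^*(\M)$. Both arguments hinge on the same Schwarz-inequality idea; Arveson's is marginally shorter because it avoids the multiplicative-domain detour, while yours has the minor advantage of making explicit that $\psi$ is in fact multiplicative.
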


\begin{proof}
See \cite[Proposition 4.4]{arveson2011noncommutative} and \cite[Lemma 2.8]{clouatre2020finite}.
\end{proof}

Finally, we require a couple known facts on hyperrigidity and boundary representations. In particular, statement {\rm (iii)} will imply that, in the interest of the hyperrigidity conjecture, it is of no harm to assume that $\M$ generates its $\rC^*$-envelope.

\begin{theorem}\label{T:HR}
Let $\M\subset B(\H)$ be a unital operator space and $(\rC^*_e(\M), \varepsilon)$ denote the $\rC^*$-envelope of $\M$. Let $\theta:\rC^*(\M)\rightarrow \rC^*_e(\M)$ denote the $*$-representation that satisfies $\theta\mid_\M = \varepsilon$. Then, the following statements hold.\begin{enumerate}[\rm (i)]
\item There is an isometric $*$-representation of $\rC^*_e(\M)$ that possesses the unique extension property with respect to $\varepsilon(\M)$.
\item If $\beta:\rC^*(\M)\rightarrow B(\H_\beta)$ is a boundary representation for $\M$, then there is a $*$-representation $\widetilde{\beta}:\rC^*_e(\M)\rightarrow B(\H_\beta)$ such that $\beta=\widetilde{\beta}\circ\theta$.
\item If every irreducible $*$-representation of $\rC^*(\M)$ is a boundary representation for $\M$, then $\rC^*(\M)$ is the $\rC^*$-envelope of $\M$.
\end{enumerate}
\end{theorem}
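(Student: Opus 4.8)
The three parts sit on top of two imported inputs---the existence of sufficiently many boundary representations for $\M$ (\cite{arveson2008noncommutative}, \cite{davidson2015choquet}) and Arveson's extension theorem (\cite{arveson1969subalgebras})---and the rest is essentially bookkeeping. The plan is to deduce (i) from the direct-sum stability of the unique extension property, to give a self-contained factorization argument for (ii) that is logically independent of (i), and then to read off (iii) from (ii). Throughout I will use that $\theta$ is surjective onto $\rC^*_e(\M)$: its image is a $\rC^*$-subalgebra containing $\theta(\M)=\varepsilon(\M)$, hence contains $\rC^*(\varepsilon(\M))=\rC^*_e(\M)$.

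For (i) I would take a family $\{\beta_i:i\in I\}$ of boundary representations for $\M$ that is \emph{sufficiently many} in the sense that their common kernel is the Shilov ideal, i.e. $\ker\bigl(\bigoplus_{i}\beta_i\bigr)=\ker\theta$; this is exactly the content of the cited existence theorem. Writing $\sigma=\bigoplus_i\beta_i$, Lemma \ref{L:UEP} gives that $\sigma$ has the unique extension property with respect to $\M$, and the kernel identification lets me factor $\sigma=\rho\circ\theta$ for a \emph{faithful} (hence isometric) $*$-representation $\rho$ of $\rC^*_e(\M)$. The only genuinely new step is that $\rho$ inherits the unique extension property with respect to $\varepsilon(\M)$: given a unital completely positive $\Psi$ with $\Psi\mid_{\varepsilon(\M)}=\rho\mid_{\varepsilon(\M)}$, the map $\Psi\circ\theta$ is unital completely positive and agrees with $\sigma$ on $\M$, so the unique extension property of $\sigma$ forces $\Psi\circ\theta=\sigma=\rho\circ\theta$, whence $\Psi=\rho$ by surjectivity of $\theta$.

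For (ii) I would exploit that $\varepsilon=\theta\mid_\M\colon\M\to\varepsilon(\M)$ is a unital completely isometric bijection, so $\beta\circ\varepsilon^{-1}\colon\varepsilon(\M)\to B(\H_\beta)$ is a unital complete contraction; passing to the operator system generated by $\varepsilon(\M)$ and applying Arveson's extension theorem produces a unital completely positive $\Phi\colon\rC^*_e(\M)\to B(\H_\beta)$ extending it. Then $\Phi\circ\theta$ is unital completely positive with $(\Phi\circ\theta)\mid_\M=\beta\mid_\M$, so the unique extension property of $\beta$ yields $\Phi\circ\theta=\beta$. Finally $\Phi$ is multiplicative: for $a=\theta(s)$, $b=\theta(t)$ one computes $\Phi(ab)=\beta(st)=\beta(s)\beta(t)=\Phi(a)\Phi(b)$ using $\Phi\circ\theta=\beta$, so $\Phi$ is a $*$-representation and $\widetilde{\beta}:=\Phi$ satisfies $\beta=\widetilde\beta\circ\theta$. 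Statement (iii) is then immediate: (ii) gives $\ker\theta\subseteq\ker\beta$ for every boundary representation $\beta$, so under the hypothesis that every irreducible $*$-representation is a boundary representation we get $\ker\theta\subseteq\bigcap_{\pi\ \mathrm{irred}}\ker\pi=\{0\}$; thus $\theta$ is an injective (hence isometric) surjection intertwining $\id$ and $\varepsilon$, identifying $(\rC^*(\M),\id)$ with the $\rC^*$-envelope.

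The main obstacle is part (i): it rests entirely on the hard, imported fact that the boundary representations of $\M$ are plentiful enough to cut out precisely the Shilov ideal, and I would simply cite this rather than reprove it. The only subtlety I would need to treat with care elsewhere is the application of the extension theorem in (ii), since $\varepsilon(\M)$ is merely a unital operator \emph{space}; this is handled by first recording that a unital complete contraction on a unital operator space is completely positive on the operator system it generates, and only then invoking Arveson's theorem.
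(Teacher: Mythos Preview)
Your proposal is correct; all three parts go through as written. The route, however, differs from the paper's in a couple of places worth noting.

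For (i), the paper works directly on $\rC^*_e(\M)$: it cites a lemma giving a family of boundary representations for $\varepsilon(\M)$ defined on $\rC^*_e(\M)$ whose direct sum is isometric, and then invokes Lemma~\ref{L:UEP}. You instead take boundary representations of $\M$ on $\rC^*(\M)$, form their direct sum $\sigma$, identify $\ker\sigma=\ker\theta$, and push $\sigma$ down to a faithful $\rho$ on $\rC^*_e(\M)$, checking afterwards that $\rho$ inherits the unique extension property. Both rest on the same deep input (existence of sufficiently many boundary representations); the paper's version avoids the factoring and inheritance steps, while yours avoids the need for a separate citation giving boundary representations already living on $\rC^*_e(\M)$.

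For (ii), the paper argues via the Shilov ideal: it cites Arveson's 1969 results that identify $\rC^*_e(\M)$ with $\rC^*(\M)/\fJ_\Sigma$ where $\fJ_\Sigma$ is the intersection of the kernels of all boundary representations, so $\fJ_\Sigma\subset\ker\beta$ forces $\beta$ to factor through the quotient. Your argument is different and in a sense more elementary: you manufacture the factoring map $\widetilde\beta$ by Arveson extension, pin it down using the unique extension property of $\beta$, and verify multiplicativity by hand via surjectivity of $\theta$. This trades the Shilov-ideal identification for the extension theorem, and has the virtue of being logically independent of (i) and of the structure theory of the boundary ideal.

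For (iii) the two arguments are essentially the same once one unpacks that $\ker\theta$ equals the Shilov ideal.
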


\begin{proof}
{\rm (i)}: By \cite[Lemma 2.11]{clouatre2020finite}, there is a collection $\Delta$ consisting of boundary representations for $\varepsilon(\M)$, that are defined on $\rC^*_e(\M)$, such that $\bigoplus_{\delta\in\Delta}\delta$ is isometric. Thus, by Lemma \ref{L:UEP}, $\bigoplus_{\delta\in\Delta}\delta$ is an isometric $*$-representation of $\rC^*_e(\M)$ that has the unique extension property with respect to $\varepsilon(\M)$.

{\rm (ii)}: By \cite[Theorem 7.1]{arveson2008noncommutative} and \cite[Theorem 3.3]{davidson2015choquet}, there is a collection $\Sigma$ consisting of boundary representations for $\M$, that are defined on $\rC^*(\M)$, such that $\bigoplus_{\sigma\in\Sigma}\sigma$ is completely isometric on $\M$ (alternatively, see \cite[Theorem 2.10]{clouatre2020finite}). Define a closed two-sided ideal of $\rC^*(\M)$ by \[\fJ_\Sigma=\bigcap_{\sigma\in\Sigma}\ker\sigma.\]By \cite[Theorem 2.2.3]{arveson1969subalgebras}, the ideal $\fJ_\Sigma$ coincides with the intersection of the kernels of all boundary representations for $\M$ that are defined on $\rC^*(\M)$. Then, by \cite[Proposition 2.2.4 and Theorem 2.2.5]{arveson1969subalgebras}, we have that $\rC^*(\M)/\fJ_\Sigma$ is the $\rC^*$-envelope of $\M$. Since $\fJ_\Sigma\subset\ker\beta$, statement {\rm (ii)} is immediate.

{\rm (iii)}: Again, let $\fJ_\Sigma$ denote the intersection of the kernels of all boundary representations for $\M$ that are defined on $\rC^*(\M)$. By \cite[Proposition 2.2.4 and Theorem 2.2.5]{arveson1969subalgebras}, $\rC^*(\M)/\fJ_\Sigma$ is the $\rC^*$-envelope of $\M$. So {\rm (iii)} is immediate since $\fJ_\Sigma=\{0\}$.
\end{proof}

We remark that the closed two-sided ideal $\fJ_\Sigma$ constructed in the proof of Theorem \ref{T:HR} is referred to as the the \emph{Shilov ideal} of $\M$ in $\rC^*(\M)$. Since $\rC^*(\M)/\fJ_\Sigma$ is the $\rC^*$-envelope of $\M$, the collection of all boundary representations for $\M$ is typically thought of as the \emph{non-commutative Choquet boundary} for $\M$.

\subsection{Approximate unitary equivalence and the unique extension property}\label{SS:aue_uep}

For our investigation, we start by showing that the collection of $*$-representations that possess the unique extension property is rarely closed under approximate unitary equivalence.

\begin{proposition}\label{P:hr_aue}
Let $\M\subset B(\H)$ be a unital operator space and assume that $\rC^*(\M)$ is the $\rC^*$-envelope of $\M$. Then, the following statements are equivalent.\begin{enumerate}[\rm (i)]
\item The operator space $\M$ is hyperrigid.
\item Whenever $\pi:\rC^*(\M)\rightarrow B(\H_\pi)$ is a $*$-representation that is approximately unitarily equivalent to a $*$-representation with the unique extension property, then $\pi$ has the unique extension property.
\end{enumerate}
\end{proposition}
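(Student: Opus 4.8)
The plan is to handle the two implications separately. The forward implication (i)$\Rightarrow$(ii) is immediate: hyperrigidity asserts that \emph{every} $*$-representation of $\rC^*(\M)$ has the unique extension property with respect to $\M$, so in particular any $\pi$ as in (ii) enjoys this property, without even invoking the approximate unitary equivalence hypothesis. The content lies in (ii)$\Rightarrow$(i), which I would establish by combining the faithful representation furnished by Theorem \ref{T:HR}(i) with Hadwin's theorem and the direct-sum behaviour of the unique extension property recorded in Lemma \ref{L:UEP}.

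For (ii)$\Rightarrow$(i), let $\pi:\rC^*(\M)\rightarrow B(\H_\pi)$ be an arbitrary $*$-representation; it suffices to show that $\pi$ has the unique extension property. Since $\rC^*(\M)$ is assumed to be the $\rC^*$-envelope of $\M$, Theorem \ref{T:HR}(i) supplies an isometric, hence faithful, $*$-representation $\rho:\rC^*(\M)\rightarrow B(\H_\rho)$ with the unique extension property with respect to $\M$. Fix an infinite cardinal $\kappa\geq\dim\H_\pi$ and form the ampliation $\rho^{(\kappa)}$ together with the direct sum $\rho^{(\kappa)}\oplus\pi$. The heart of the argument is a rank computation feeding into Hadwin's theorem: because $\rho$ is faithful, $\text{rank}(\rho(t))\geq1$ for every nonzero $t$, so $\text{rank}(\rho^{(\kappa)}(t))\geq\kappa$ there, while the rank vanishes at $t=0$. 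As $\text{rank}(\pi(t))\leq\dim\H_\pi\leq\kappa$, infinite cardinal arithmetic absorbs the smaller summand, giving $\text{rank}((\rho^{(\kappa)}\oplus\pi)(t))=\text{rank}(\rho^{(\kappa)}(t))+\text{rank}(\pi(t))=\text{rank}(\rho^{(\kappa)}(t))$ for every $t$. By Theorem \ref{T:HadwinAUE}, $\rho^{(\kappa)}\oplus\pi$ is approximately unitarily equivalent to $\rho^{(\kappa)}$, and the latter has the unique extension property by Lemma \ref{L:UEP}.

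To conclude, I would apply hypothesis (ii) to the representation $\rho^{(\kappa)}\oplus\pi$: being approximately unitarily equivalent to $\rho^{(\kappa)}$, a representation with the unique extension property, it must itself have the unique extension property. The converse direction of Lemma \ref{L:UEP}, applied to the summands of this direct sum, then forces $\pi$ to have the unique extension property; since $\pi$ was arbitrary, $\M$ is hyperrigid. I expect the only delicate point to be the cardinal bookkeeping in the rank computation—ensuring $\kappa$ is chosen large enough that the faithful ampliation pointwise dominates the rank of $\pi$—after which Theorem \ref{T:HadwinAUE} and the bidirectional Lemma \ref{L:UEP} close the argument mechanically.
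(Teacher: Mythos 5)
Your proof is correct and follows essentially the same route as the paper: both arguments combine the isometric representation with the unique extension property from Theorem \ref{T:HR}(i), a rank computation feeding Hadwin's Theorem \ref{T:HadwinAUE}, and the bidirectional Lemma \ref{L:UEP}. The only (cosmetic) difference is that the paper first reduces to the case where $\pi$ is injective and then ampliates both representations to have constant rank $\kappa$, whereas you absorb $\pi$ directly into $\rho^{(\kappa)}$ via cardinal arithmetic; the underlying absorption idea is identical.
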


\begin{proof}
It is trivial that {\rm (i)}$\Rightarrow${\rm (ii)}. For the converse, let $\pi:\rC^*(\M)\rightarrow B(\H_\pi)$ be a $*$-representation. We show that $\pi$ has the unique extension property. Since $\M$ generates its $\rC^*$-envelope, by Lemma \ref{L:UEP} and Theorem \ref{T:HR} {\rm (i)}, we may assume that $\pi$ is injective. Define an infinite cardinal $\kappa = \max\{\aleph_0, \dim\H_\pi\}$. By considering $\pi^{(\kappa)}$ if necessary, we may additionally assume that $\text{rank}(\pi(t))=\kappa$ for each non-zero $t\in\rC^*(\M)$.

Now, by Theorem \ref{T:HR} {\rm (i)}, there is an injective $*$-representation $\sigma$ of $\rC^*(\M)$ that possesses the unique extension property with respect to $\M$. Again, we may assume that $\text{rank}(\sigma(t))=\kappa$ for every non-zero $t\in\rC^*(\M)$. By Theorem \ref{T:HadwinAUE}, we have that $\pi$ and $\sigma$ are approximately unitarily equivalent. Whence, $\pi$ has the unique extension property with respect to $\M$ by assumption.
\end{proof}

\begin{remark}\label{R:DirInt}
Assume that $\rC^*_e(\M)$ is separable and postliminal. To study hyperrigidity, the reader may find it desirable to consider how the unique extension property behaves with respect to direct integrals as this machinery can be used to classify multiplicity-free $*$-representations up to unitary equivalence \cite[Theorem 4.3.4]{arveson2012invitation}. However, due to \cite[Corollary 6.3]{hadwin1981nonseparable} and Lemma \ref{L:UEP}, the unique extension property is stable under direct integrals whenever it is stable under approximate unitary equivalence. We opt to study the latter, but remark that there has been some progress using direct integrals \cite{arveson2011noncommutative},\cite{kleski2014korovkin}.
\end{remark}

Now, we discuss approximate unitary equivalence of \emph{irreducible} $*$-representations with the unique extension property. As the next two examples demonstrate, the answer to this question is more subtle than the previous one.

We first note that a variation of Proposition \ref{P:hr_aue} fails even in the classical setting of function spaces. For this, recall that postliminal $\rC^*$-algebras satisfy the property that irreducible $*$-representations that possess the same kernel are necessarily unitarily equivalent \cite[Corollary 4.1.10]{diximier1977c}. Thus, whenever the $\rC^*$-envelope is postliminal, the non-commutative Choquet boundary is automatically stable under approximate unitary equivalence.

\begin{example}\label{E:all_BR_aue}
There is a compact metric space $X$ and a unital subalgbera $\A\subset\rC(X)$ that separates points in $X$ with the property that the Choquet boundary is a proper subset of the Shilov boundary \cite[pg. 42]{phelps2001lectures}. In other words, not every irreducible $*$-representation of $\rC^*_e(\A)$ is a boundary representation for $\A$ \cite[Theorem II.11.3]{gamelin2005uniform}.

On the other hand, suppose that $\pi$ is a boundary representation for $\A$. If $\sigma$ is an irreducible $*$-representation of $\rC^*_e(\A)$ that is approximately unitarily equivalent to $\pi$, then $\pi$ and $\sigma$ are unitarily equivalent by the previous discussion. Whence, $\sigma$ is also a boundary representation for $\A$. So, the boundary representations for $\A$ are stable under approximate unitary equivalence, yet not every irreducible $*$-representation of $\rC^*_e(\A)$ is a boundary representation for $\A$.
\end{example}

The above conclusion can be achieved for any other unital operator space that fails to be hyperrigid and has a postliminal $\rC^*$-envelope. However, when we remove the postliminal assumption, the non-commutative Choquet boundary is not necessarily closed under approximate unitary equivalence. Two examples that demonstrate the failure of this preservation are the operator system generated by a pair of freely independent semicircular operators \cite[Example 6.6.3]{davidson2019noncommutative} and the infinite Cuntz algebra \cite[Continuation of Example 2.7]{muhly1998tensor}. We recall the details of the latter example.

\begin{example}\label{E:CuntzInf}
Let $\O_\infty$ denote the infinite Cuntz algebra with generators $(V_n)_{n\in\bN},$ and consider the algebra $\A$ that is generated by $\{V_n : n\in\bN\}$. As $\rC^*(\A)\cong\O_\infty$ is simple, we have that all irreducible $*$-representations of $\O_\infty$ are approximately unitarily equivalent.

However, up to unitary equivalence, there is a unique irreducible $*$-representation of $\O_\infty$ that is not a boundary representation for $\A$. Indeed, the $*$-representations of $\O_\infty$ are in one-to-one correspondence with sequences of isometries $(W_n)_{n\geq0}$ satisfying $\sum_{n=1}^\infty W_nW_n^*\leq I$. An irreducible $*$-representation is a boundary representation for $\A$ if and only if the associated sequence of isometries satisfies $\sum_{n=1}^\infty W_nW_n^*= I$ (see \cite{muhly1998tensor} for details). By work of Popescu \cite[Theorem 1.3]{popescu1989isometric}, up to unitary equivalence, there is a unique such irreducible $*$-representation with $\sum_{n=1}^\infty W_nW_n^*\neq I$.
\end{example}

We remark that the operator system generated by a pair of freely independent semicircular operators possibly demonstrates an opposite extrema to that of the infinite Cuntz algebra. Indeed, within \cite[Example 6.6.3]{davidson2019noncommutative}, an uncountable class of irreducible $*$-representations were constructed where all but a finite number were boundary representations. Simultaneously, one can obtain that all irreducible $*$-representations of the $\rC^*$-envelope are approximately unitarily equivalent for the same reasoning as in Example \ref{E:CuntzInf}. On the other hand, in Example \ref{E:CuntzInf}, it was found that there is a unique irreducible $*$-representation that fails to be a boundary representation.

\subsection{Asymptotics of the unique extension property}\label{SS:asymp_uep}

The unique extension property enjoys a well-known characterization in the framework of pointwise convergent sequences. The separable version can be found within \cite[Proposition 2.2]{kleski2014korovkin}. We record the non-separable version, which is obtained by replacing sequences with nets.

\begin{proposition}\label{P:UEPasym}
Let $\M\subset B(\H)$ be a unital operator space and let $\pi: \rC^*(\M)\rightarrow B(\H_\pi)$ be a $*$-representation. Then, the following statements are equivalent.\begin{enumerate}[\rm (i)]
\item The $*$-representation $\pi$ has the unique extension property with respect to $\M$.
\item If $\psi_\alpha: \rC^*(\M)\rightarrow B(\H_\pi)$ is a net of unital completely positive maps such that $\psi_\alpha(m)\rightarrow\pi(m)$ in the weak operator topology for every $m\in\M$, then $\psi_\alpha(t)\rightarrow\pi(t)$ in the weak operator topology for every $t\in\rC^*(\M).$
\item If $\psi_\alpha: \rC^*(\M)\rightarrow B(\H_\pi)$ is a net of unital completely positive maps such that $\psi_\alpha(m)\rightarrow\pi(m)$ in the strong operator topology for every $m\in\M$, then $\psi_\alpha(t)\rightarrow\pi(t)$ in the strong operator topology for every $t\in\rC^*(\M).$
\end{enumerate}
\end{proposition}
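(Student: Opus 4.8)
The plan is to prove the cyclic chain of implications $(i)\Rightarrow(iii)\Rightarrow(ii)\Rightarrow(i)$, which is cleanest because the implication between the two asymptotic statements is essentially free. The central insight is that the unique extension property is a statement about individual unital completely positive (ucp) maps, while statements (ii) and (iii) concern nets of such maps; the bridge between them is a compactness/limit argument in the set of ucp maps from $\rC^*(\M)$ into $B(\H_\pi)$. I would rely on the fact that, by the Arveson extension theorem, every ucp map on $\M$ extends to a ucp map on $\rC^*(\M)$, together with the point-weak-$*$ compactness of the collection of ucp maps into $B(\H_\pi)$ (a Banach--Alaoglu type argument applied to the unit ball of completely bounded maps).

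For $(i)\Rightarrow(iii)$, I would assume $\pi$ has the unique extension property and take a net $\psi_\alpha$ of ucp maps with $\psi_\alpha(m)\to\pi(m)$ strongly for all $m\in\M$. Passing to a subnet, I may assume $\psi_\alpha$ converges point-weak-$*$ (equivalently, pointwise in the weak operator topology) to some ucp map $\psi$. Since strong convergence implies weak convergence, $\psi\mid_\M=\pi\mid_\M$, so the unique extension property forces $\psi=\pi$; hence the only point-weak-operator cluster point of $(\psi_\alpha)$ on all of $\rC^*(\M)$ is $\pi$, giving weak convergence $\psi_\alpha(t)\to\pi(t)$ for every $t$. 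To upgrade from weak to strong convergence on all of $\rC^*(\M)$, I would use that $\pi$ is a $*$-homomorphism, so $\pi(t^*t)=\pi(t)^*\pi(t)$, combined with the Kadison--Schwarz inequality $\psi_\alpha(t)^*\psi_\alpha(t)\le\psi_\alpha(t^*t)$ valid for ucp maps. For a fixed unit vector $\xi$, weak convergence of $\psi_\alpha(t^*t)$ to $\pi(t^*t)$ then controls $\|\psi_\alpha(t)\xi-\pi(t)\xi\|^2$ via the standard expansion $\|\psi_\alpha(t)\xi-\pi(t)\xi\|^2=\langle\psi_\alpha(t)^*\psi_\alpha(t)\xi,\xi\rangle-2\re\langle\psi_\alpha(t)\xi,\pi(t)\xi\rangle+\|\pi(t)\xi\|^2$, and each term converges to $\|\pi(t)\xi\|^2$. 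This is the key computation, and the Kadison--Schwarz inequality is exactly what makes it work.

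The implication $(iii)\Rightarrow(ii)$ follows by the same Kadison--Schwarz estimate run in reverse at the level of convergence modes: given $(ii)$'s hypothesis of weak convergence on $\M$, I cannot directly conclude strong convergence, so instead I would prove $(iii)\Rightarrow(ii)$ by noting that the estimate above shows weak convergence of $\psi_\alpha(t^*t)$ already yields strong convergence of $\psi_\alpha(t)$ to $\pi(t)$; thus statements (ii) and (iii) have the same force, and establishing one yields the other by observing that the net hypotheses are compatible. Concretely, I would argue $(iii)\Rightarrow(ii)$ directly: assuming $(iii)$ and a net with weak convergence on $\M$, the subnet/cluster-point argument again produces a limit ucp map agreeing with $\pi$ on $\M$; but to invoke $(iii)$ I need strong convergence on $\M$, which I get for the cluster point $\psi=\pi$ since then $\psi_\alpha(m^*m)\to\pi(m^*m)$ weakly and the Kadison--Schwarz step promotes this to strong convergence on $\M$. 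Finally, $(ii)\Rightarrow(i)$ is the most direct: a single ucp extension $\psi$ with $\psi\mid_\M=\pi\mid_\M$ can be viewed as a constant net, which converges weakly to $\pi$ on $\M$ trivially, so $(ii)$ forces $\psi(t)=\pi(t)$ for all $t$, i.e. $\psi=\pi$.

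I expect the main obstacle to be the bookkeeping around which convergence mode is available at each stage and ensuring the cluster-point extraction is legitimate in the non-separable setting, where I must genuinely work with nets and subnets rather than subsequences. The Kadison--Schwarz inequality does the real analytic work of converting weak to strong convergence, so once that mechanism is set up the remaining implications are formal. The non-separable adaptation of \cite[Proposition 2.2]{kleski2014korovkin} requires no new ideas beyond replacing sequences by nets, since the compactness argument underlying the cluster-point extraction is valid for the point-weak-$*$ topology regardless of separability.
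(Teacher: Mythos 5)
The paper does not actually prove Proposition \ref{P:UEPasym}: it cites the separable case \cite[Proposition 2.2]{kleski2014korovkin} and asserts that the net version is obtained by replacing sequences with nets. Your proposal is essentially that standard argument written out in full: point-weak-operator compactness of the unital completely positive maps into $B(\H_\pi)$ supplies cluster points, the unique extension property identifies every cluster point agreeing with $\pi$ on $\M$ as $\pi$ itself, and the Schwarz inequality $\psi_\alpha(t)^*\psi_\alpha(t)\le\psi_\alpha(t^*t)$ upgrades weak to strong convergence once the limit is known to be a $*$-homomorphism (this last step is precisely \cite[Lemma 2.1]{kleski2014korovkin}, which the paper invokes elsewhere, e.g.\ in the proof of Theorem \ref{T:AUEPsequence}). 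The implications obtained by viewing a single extension as a constant net are exactly right, and your remark that the compactness extraction works for nets without any separability is the whole content of the paper's ``replace sequences with nets.''

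One clause in your treatment of $(iii)\Rightarrow(ii)$ is wrong as stated, though harmlessly so. You claim that $\psi_\alpha(m^*m)\to\pi(m^*m)$ weakly for $m\in\M$ and that Kadison--Schwarz then promotes the convergence on $\M$ from weak to strong. But $\M$ is only a unital operator space, not an algebra, so $m^*m\notin\M$ in general, and the hypothesis of $(ii)$ gives no information whatsoever about $\psi_\alpha(m^*m)$; trying to apply $(iii)$ to the original net is therefore not available. The correct (and simpler) step is the one you gesture at earlier in the same sentence: take a point-weak-operator cluster point $\psi$ of the net; it satisfies $\psi\mid_\M=\pi\mid_\M$ exactly, so the \emph{constant} net with value $\psi$ converges to $\pi$ strongly (indeed in norm) on $\M$, and $(iii)$ applied to that constant net yields $\psi=\pi$. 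Since every cluster point of the original net is $\pi$ and the net lives in a point-weak-operator compact set, the whole net converges weakly to $\pi$ on $\rC^*(\M)$, which is $(ii)$. With that repair the cycle $(i)\Rightarrow(iii)\Rightarrow(ii)\Rightarrow(i)$ closes and the proof is complete.
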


We present a reinterpretation of Proposition \ref{P:UEPasym} that allows us to connect the unique extension property with a notion of approximate unitary equivalence for unital completely positive extensions. We will return to this philosophy in future sections.

\begin{proposition}\label{P:UEPbackwardsApprox}
Let $\M\subset B(\H)$ be a unital operator space and $\pi: \rC^*(\M)\rightarrow B(\H_\pi)$ be a $*$-representation. Then, the following statements are equivalent.\begin{enumerate}[\rm (i)]
\item The $*$-representation $\pi$ has the unique extension property with respect to $\M$.
\item If $\psi: \rC^*(\M)\rightarrow B(\H_\pi)$ is a unital completely positive map satisfying $\psi\mid_\M =\pi\mid_\M$, then there exists a net of unitary operators $w_\beta\in B(\H_\pi)$ such that $w_\beta^*\pi(t)w_\beta\rightarrow\psi(t)$ in the strong operator topology for every $t\in \rC^*(\M)$.
\end{enumerate}
\end{proposition}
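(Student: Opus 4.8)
The implication (i)$\Rightarrow$(ii) is immediate: if $\pi$ has the unique extension property and $\psi\mid_\M = \pi\mid_\M$, then the very definition of the property forces $\psi = \pi$, so the constant net $w_\beta = I$ witnesses (ii) trivially. The substance of the proposition therefore lies in the converse (ii)$\Rightarrow$(i).

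For that direction the plan is to start from an arbitrary unital completely positive map $\psi:\rC^*(\M)\rightarrow B(\H_\pi)$ with $\psi\mid_\M = \pi\mid_\M$ and deduce $\psi = \pi$. Applying the hypothesis (ii) to this $\psi$ produces a net of unitaries $w_\beta\in B(\H_\pi)$ with $w_\beta^*\pi(t)w_\beta\rightarrow\psi(t)$ in the strong operator topology for every $t\in\rC^*(\M)$. Writing $\psi_\beta(t) = w_\beta^*\pi(t)w_\beta$, each $\psi_\beta$ is a $*$-representation, so in particular $\psi_\beta(t^*t) = \psi_\beta(t)^*\psi_\beta(t)$, and I want to pass this multiplicativity to the limit.

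The key step is to promote the identity $\psi_\beta(t^*t)=\psi_\beta(t)^*\psi_\beta(t)$ to $\psi$, the obstacle being that strong operator convergence is not continuous under the adjoint and so $\psi_\beta(t)^*\psi_\beta(t)$ need not converge strongly to $\psi(t)^*\psi(t)$. I would circumvent this at the level of quadratic forms: for fixed $\xi\in\H_\pi$ and $t\in\rC^*(\M)$,
\[
\langle\psi_\beta(t^*t)\xi,\xi\rangle = \|\psi_\beta(t)\xi\|^2 \longrightarrow \|\psi(t)\xi\|^2 = \langle\psi(t)^*\psi(t)\xi,\xi\rangle ,
\]
using $\psi_\beta(t)\xi\rightarrow\psi(t)\xi$, while at the same time $\langle\psi_\beta(t^*t)\xi,\xi\rangle\rightarrow\langle\psi(t^*t)\xi,\xi\rangle$ from the strong convergence of $\psi_\beta$ at the element $t^*t$. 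Since a bounded operator on a complex Hilbert space is determined by its quadratic form, this gives $\psi(t^*t) = \psi(t)^*\psi(t)$ for every $t\in\rC^*(\M)$; replacing $t$ by $t^*$ and using $\psi(t^*)=\psi(t)^*$ likewise yields $\psi(tt^*)=\psi(t)\psi(t)^*$.

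Finally, a unital completely positive map satisfying these two equalities for all $t$ has multiplicative domain equal to all of $\rC^*(\M)$ and is therefore a $*$-homomorphism; equivalently, in a minimal Stinespring dilation $\psi = V^*\rho(\cdot)V$ the equality forces $\ran V$ to be reducing for $\rho$, which makes $\psi$ multiplicative. As $\psi$ and $\pi$ are then both $*$-homomorphisms agreeing on the generating set $\M$, they agree on all of $\rC^*(\M)$, so $\psi=\pi$ and $\pi$ has the unique extension property. I expect the only delicate point to be precisely the adjoint-insensitivity of the strong operator topology, which the quadratic-form computation above is designed to sidestep.
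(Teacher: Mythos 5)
Your proof is correct and follows essentially the same route as the paper: both directions are handled identically, and the heart of the matter in each case is showing that the SOT limit $\psi$ of the $*$-representations $\psi_\beta = w_\beta^*\pi(\cdot)w_\beta$ is itself multiplicative, whence $\psi=\pi$ because the two homomorphisms agree on the generating set $\M$. The paper obtains multiplicativity in one line from the joint SOT-continuity of multiplication on bounded sets applied to $\psi(st)=\lim_\beta\psi_\beta(s)\psi_\beta(t)$ --- which never forms an adjoint and so avoids the very issue your quadratic-form computation is designed to sidestep --- but your Schwarz-equality/multiplicative-domain argument is equally valid.
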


\begin{proof}
${\rm (i)}\Rightarrow {\rm (ii)}:$ In this case, $\psi = \pi$ and so we may take $w_\beta$ to be the identity operator on $\H_\pi$.

${\rm (ii)}\Rightarrow{\rm (i)}:$ Let $\psi: \rC^*(\M)\rightarrow B(\H_\pi)$ be a unital completely positive map satisfying $\psi\mid_\M = \pi\mid_\M$ and let $w_\beta\in B(\H_\pi)$ be a net of unitary operators such that $w_\beta^*\pi(t)w_\beta\rightarrow\psi(t)$ in the strong operator topology for each $t\in \rC^*(\M)$. As multiplication over bounded subsets is continuous in the strong operator topology, and as the net $(w_\beta\pi(\cdot)w_\beta)_\beta$ consists of $*$-representations, we may conclude that $\psi(st) =  \psi(s)\psi(t)$ for all $s,t\in\rC^*(\M).$ Since $\psi\mid_\M=\pi\mid_\M$, we then obtain that $\psi = \pi$.
\end{proof}

As we shall see in Section \ref{S:ApproxUEP}, the equivalence of Proposition \ref{P:UEPbackwardsApprox} is no longer true if one were to interchange the roles of the maps $\psi$ and $\pi$ in statement {\rm (ii)}.

\section{Approximate unique extension property}\label{S:ApproxUEP}

In accordance with studying the approximate unitary equivalence class of a $*$-representation possessing the unique extension property, we focus our attention on a variation of the unique extension property.

\begin{definition}\label{D:ApproxUEP}
Let $\M\subset B(\H)$ be a unital operator space and let $\pi: \rC^*(\M)\rightarrow B(\H_\pi)$ be a $*$-representation. We say that $\pi$ has the \emph{approximate unique extension property} with respect to $\M$ if, whenever $\psi: \rC^*(\M)\rightarrow B(\H_\pi)$ is a unital completely positive map satisfying $\psi\mid_\M = \pi\mid_\M$, then there is a net of unitaries $u_\beta\in B(\H_\pi)$ such that $u_\beta^*\psi(t)u_\beta\rightarrow\pi(t)$ in the weak operator topology for every $t\in\rC^*(\M)$.
\end{definition}

When there is no confusion, we will refrain from referencing the operator space and simply state that a $*$-representation possesses the approximate unique extension property. Moreover, when $\M$ and $\H_\pi$ are separable, then the net of unitaries may be replaced by a sequence.

We caution the reader that the approximate unique extension property differs from the characterization of the unique extension property that was seen within Proposition \ref{P:UEPbackwardsApprox} {\rm (ii)}. For example, we will see that this follows from Theorem \ref{T:UEPimplyAUEP}. Additionally, we remark that Definition \ref{D:ApproxUEP} is somewhat reminiscent of different conditions seen in the literature but remains largely independent \cite[Theorem 5.2]{davidson2017dilations},\cite[Theorem 2.4]{hadwin1987completely}.

Recall that boundary representations factor through the $\rC^*$-envelope by Theorem \ref{T:HR} {\rm (ii)}. Since the approximate unique extension property appears to be a significant constraint on the space of completely positive extensions, one may wonder whether similar behaviour is present. While the author is unaware of such a restriction, we start with a fact that is sufficient for our purposes.

\begin{lemma}\label{L:ApproxUEPquotient}
Let $\M\subset B(\H)$ be a unital operator space and $\fA\subset B(\K)$ be a $\rC^*$-algebra. Let $q: \rC^*(\M)\rightarrow \fA$ be a surjective $*$-homomorphism and $\pi: \fA\rightarrow B(\H_\pi)$ be a $*$-representation. If $\pi\circ q$ possesses the approximate unique extension property with respect to $\M$, then $\pi$ possesses the approximate unique extension property with respect to $q(\M)$.
\end{lemma}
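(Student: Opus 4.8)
The plan is to transport the problem back to $\rC^*(\M)$ by precomposing with $q$, invoke the hypothesised approximate unique extension property of $\pi\circ q$, and then push the resulting conclusion forward using surjectivity of $q$. First I would record the structural observation that, since $q$ is a surjective $*$-homomorphism, its image $\fA$ equals $\rC^*(q(\M))$; moreover $q$ carries the unit of $\rC^*(\M)$ to the unit of $\fA$, so that $q(\M)$ is a unital operator space generating $\fA$. This ensures that the phrase ``$\pi$ has the approximate unique extension property with respect to $q(\M)$'' is meaningful in the sense of Definition \ref{D:ApproxUEP}, with $\pi$ playing the role of a $*$-representation of $\rC^*(q(\M)) = \fA$.

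To verify the defining condition, I would take an arbitrary unital completely positive map $\psi:\fA\rightarrow B(\H_\pi)$ with $\psi\mid_{q(\M)} = \pi\mid_{q(\M)}$ and form the composition $\psi\circ q: \rC^*(\M)\rightarrow B(\H_\pi)$. As a composition of the unital $*$-homomorphism $q$ with the unital completely positive map $\psi$, this is again unital completely positive. The key compatibility check is that $\psi\circ q$ and $\pi\circ q$ agree on $\M$: for $m\in\M$ we have $q(m)\in q(\M)$, whence $(\psi\circ q)(m) = \psi(q(m)) = \pi(q(m)) = (\pi\circ q)(m)$. Thus $\psi\circ q$ is a unital completely positive extension of $(\pi\circ q)\mid_\M$.

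Now I would invoke the hypothesis that $\pi\circ q$ possesses the approximate unique extension property with respect to $\M$. This furnishes a net of unitaries $u_\beta\in B(\H_\pi)$ with $u_\beta^*(\psi\circ q)(t)u_\beta\rightarrow(\pi\circ q)(t)$ in the weak operator topology for every $t\in\rC^*(\M)$. Finally, using surjectivity once more, every $a\in\fA$ may be written as $a = q(t)$ for some $t\in\rC^*(\M)$, so that $u_\beta^*\psi(a)u_\beta = u_\beta^*(\psi\circ q)(t)u_\beta\rightarrow(\pi\circ q)(t) = \pi(a)$ in the weak operator topology. Since $\psi$ was arbitrary, this is exactly the assertion that $\pi$ has the approximate unique extension property with respect to $q(\M)$.

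There is no serious obstacle here: the argument is a straightforward pull-back and push-forward, and the only points demanding any care are organisational—confirming that the pulled-back map $\psi\circ q$ is a genuine unital completely positive extension agreeing with $\pi\circ q$ on $\M$, and using surjectivity of $q$ both to identify $\fA$ with $\rC^*(q(\M))$ and to transfer the weak operator convergence from all of $\rC^*(\M)$ down to all of $\fA$. Notably, the same net $u_\beta$ serves throughout, since $\pi\circ q$, $\psi\circ q$, $\pi$, and $\psi$ all act on the single Hilbert space $\H_\pi$.
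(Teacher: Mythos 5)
Your argument is correct and is essentially identical to the paper's proof: pull back $\psi$ along $q$ to get a unital completely positive extension of $(\pi\circ q)\mid_\M$, apply the hypothesis to obtain the net of unitaries, and use surjectivity of $q$ to transfer the weak operator convergence to all of $\fA$. The only addition is your preliminary remark that $\fA = \rC^*(q(\M))$, which the paper leaves implicit but which is a reasonable point to make explicit.
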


\begin{proof}
Suppose that $\psi: \fA\rightarrow B(\H_\pi)$ is a unital completely positive map such that $\psi\mid_{q(\M)} = \pi\mid_{q(\M)}$. Then $\varphi := \psi\circ q$ is a unital completely positive map satisfying $\varphi\mid_\M = (\pi\circ q)\mid_\M$. By the assumption, there is a net of unitary operators $u_\beta\in B(\H_\pi)$ such that \[ \lim_{\beta}u_\beta^*\varphi(t)u_\beta = (\pi\circ q)(t), \ \ \ \ \ \ t\in \rC^*(\M),\]in the weak operator topology. Thus, \[ \lim_{\beta}u_\beta^* \psi(s)u_\beta = \pi(s), \ \ \ \ \ \ s\in \fA,\]in the weak operator topology. So $\pi$ has the approximate unique extension property.
\end{proof}

As a completely isometric representation of $\M$ induces a $*$-isomorphism of $\rC^*$-envelopes \cite[pg. 219]{paulsen2002completely}, Lemma \ref{L:ApproxUEPquotient} implies that the $*$-representations of $\rC^*_e(\M)$ that possess the approximate unique extension property are invariant under completely isometric isomorphism.

\begin{proposition}\label{P:AUEPInv}
Let $\M\subset B(\H)$ and $\N\subset B(\K)$ be unital operator spaces that generate their respective $\rC^*$-envelopes. Suppose that $\iota:\M\rightarrow \N$ is a unital completely isometric linear map and let $\theta:\rC^*_e(\M)\rightarrow\rC^*_e(\N)$ be a $*$-isomorphism satisfying $\theta\mid_\M = \iota$. Then, $\pi:\rC^*_e(\N)\rightarrow B(\H_\pi)$ possesses the approximate unique extension property with respect to $\N$ if and only if $\pi\circ\theta$ possesses the approximate unique extension property with respect to $\M$.
\end{proposition}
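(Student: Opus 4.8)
The plan is to deduce both implications directly from Lemma~\ref{L:ApproxUEPquotient}, exploiting the fact that a $*$-isomorphism is invertible, so that the single one-directional lemma can be applied twice, once with $q = \theta$ and once with $q = \theta^{-1}$. First I would record the relevant bookkeeping. Since $\M$ and $\N$ generate their $\rC^*$-envelopes, we have $\rC^*(\M) = \rC^*_e(\M)$ and $\rC^*(\N) = \rC^*_e(\N)$. Because $\theta$ is a $*$-isomorphism, both $\theta$ and $\theta^{-1}$ are surjective $*$-homomorphisms. Moreover, since $\theta\mid_\M = \iota$ carries $\M$ onto $\N$, we get $\theta(\M) = \N$ and hence $\theta^{-1}(\N) = \M$. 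These two identities are exactly what is needed to make the image operator space produced by Lemma~\ref{L:ApproxUEPquotient} come out correctly in each direction.

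For the ``if'' direction, suppose $\pi\circ\theta$ possesses the approximate unique extension property with respect to $\M$. I would apply Lemma~\ref{L:ApproxUEPquotient} with the surjective $*$-homomorphism $q = \theta\colon \rC^*(\M)\to\rC^*_e(\N)$ and the $*$-representation $\pi\colon \rC^*_e(\N)\to B(\H_\pi)$. The composite $\pi\circ q = \pi\circ\theta$ has the approximate unique extension property with respect to $\M$ by hypothesis, so the lemma yields that $\pi$ has the approximate unique extension property with respect to $q(\M) = \theta(\M) = \N$, as desired.

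For the converse, suppose $\pi$ possesses the approximate unique extension property with respect to $\N$. Now I would set $\rho = \pi\circ\theta$, a $*$-representation of $\rC^*_e(\M)$, and apply Lemma~\ref{L:ApproxUEPquotient} with the surjective $*$-homomorphism $q = \theta^{-1}\colon \rC^*(\N)\to\rC^*_e(\M)$ and the representation $\rho$. Here the relevant composite is $\rho\circ q = \pi\circ\theta\circ\theta^{-1} = \pi$, which has the approximate unique extension property with respect to $\N$ by assumption; the lemma then gives that $\rho = \pi\circ\theta$ has the approximate unique extension property with respect to $q(\N) = \theta^{-1}(\N) = \M$. This completes the equivalence.

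I do not expect a genuine obstacle here: the content is entirely carried by Lemma~\ref{L:ApproxUEPquotient}, and the argument is a symmetric two-fold application using $\theta$ and $\theta^{-1}$. The only point that needs care is the verification that $\theta(\M) = \N$ (equivalently, that $\iota$ maps $\M$ \emph{onto} $\N$), since this is what aligns ``with respect to $q(\M)$'' in the lemma with ``with respect to $\N$'' in the statement; once that identification is in place, each direction is immediate.
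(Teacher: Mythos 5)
Your proposal is correct and follows essentially the same route as the paper: both directions are obtained by applying Lemma~\ref{L:ApproxUEPquotient} twice, once with $q=\theta$ and once with $q=\theta^{-1}$ via the factorization $\pi=(\pi\circ\theta)\circ\theta^{-1}$. Your remark that one must read $\iota$ as mapping $\M$ \emph{onto} $\N$ (so that $q(\M)=\N$ and $q(\N)=\M$) is a fair point of care that the paper leaves implicit.
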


\begin{proof}
If $\pi\circ\theta$ has the approximate unique extension property with respect to $\M$, then $\pi$ has the approximate unique extension property with respect to $\N$ by Lemma \ref{L:ApproxUEPquotient}.

Conversely, if $\pi = (\pi\circ\theta)\circ\theta^{-1}$ has the approximate unique extension property with respect to $\N$, then again $\pi\circ\theta$ has the approximate unique extension property with respect to $\M$ by Lemma \ref{L:ApproxUEPquotient}.
\end{proof}

Additionally, we remark that the approximate unique extension property has a characterization that is reminiscent of Proposition \ref{P:UEPasym}.

\begin{proposition}
Let $\M\subset B(\H)$ be a unital operator space and let $\pi:\rC^*(\M)\rightarrow B(\H_\pi)$ be a $*$-representation. Then, the following statements are equivalent.\begin{enumerate}[\rm (i)]
\item The $*$-representation $\pi$ possesses the approximate unique extension property.
\item If $\psi_\alpha:\rC^*(\M)\rightarrow B(\H_\pi)$ is a net of unital completely positive maps such that $\psi_\alpha(m)\rightarrow \pi(m)$ in the weak operator topology for every $m\in\M$, then there is a subnet $(\psi_{\alpha_\eta})_\eta$ of unital completely positive maps and a net of unitary operators $u_\beta\in B(\H_\pi)$ such that \[\lim_\beta\lim_\eta u_\beta^* \psi_{\alpha_\eta}(t)u_\beta = \pi(t), \ \ \ \ \ \ t\in\rC^*(\M),\]in the weak operator topology.
\end{enumerate}
\end{proposition}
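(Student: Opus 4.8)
The plan is to treat the two implications separately. The implication ${\rm (ii)}\Rightarrow{\rm (i)}$ is the routine one: given a unital completely positive map $\psi$ with $\psi\mid_\M=\pi\mid_\M$, I would feed the \emph{constant} net $\psi_\alpha=\psi$ into statement {\rm (ii)}. This net trivially satisfies $\psi_\alpha(m)\rightarrow\pi(m)$ on $\M$, so {\rm (ii)} yields a subnet (necessarily still equal to $\psi$) together with a net of unitaries $u_\beta$ realizing the iterated limit. Since the inner limit over $\eta$ of a constant net is simply $u_\beta^*\psi(t)u_\beta$, the iterated limit collapses to $\lim_\beta u_\beta^*\psi(t)u_\beta=\pi(t)$ in the weak operator topology, which is precisely the approximate unique extension property of Definition \ref{D:ApproxUEP}.

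The substance lies in ${\rm (i)}\Rightarrow{\rm (ii)}$, and here the engine is a compactness principle: the set of unital completely positive maps from $\rC^*(\M)$ into $B(\H_\pi)$ is compact in the topology of pointwise weak-operator convergence. I would justify this by a Banach--Alaoglu argument, viewing such a map as a point of the product $\prod_{t\in\rC^*(\M)}\{T\in B(\H_\pi):\|T\|\leq\|t\|\}$ of weak-operator-compact balls; unitality and complete positivity are preserved under pointwise weak-operator limits because the matrix positive cones are weak-operator closed, so the set of unital completely positive maps sits as a closed, hence compact, subset. Given the net $\psi_\alpha$ from the hypothesis of {\rm (ii)}, I would extract a subnet $\psi_{\alpha_\eta}$ converging pointwise in the weak operator topology to some unital completely positive map $\psi$.

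It then remains to identify $\psi$ and apply the hypothesis. Because $\psi_\alpha(m)\rightarrow\pi(m)$ for every $m\in\M$, the same holds along the subnet, forcing $\psi\mid_\M=\pi\mid_\M$. Invoking {\rm (i)}, I obtain a net of unitaries $u_\beta\in B(\H_\pi)$ with $u_\beta^*\psi(t)u_\beta\rightarrow\pi(t)$ in the weak operator topology for every $t\in\rC^*(\M)$. For each \emph{fixed} $\beta$, conjugation $T\mapsto u_\beta^*Tu_\beta$ is continuous in the weak operator topology, so $\lim_\eta u_\beta^*\psi_{\alpha_\eta}(t)u_\beta=u_\beta^*\psi(t)u_\beta$; taking the outer limit in $\beta$ then gives $\lim_\beta\lim_\eta u_\beta^*\psi_{\alpha_\eta}(t)u_\beta=\pi(t)$, as required. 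The only genuinely delicate point is the compactness assertion together with the verification that complete positivity survives pointwise weak-operator limits; once that is in hand, the bookkeeping of the iterated limit is straightforward.
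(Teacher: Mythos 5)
Your proof is correct and follows essentially the same route as the paper: for {\rm (i)}$\Rightarrow${\rm (ii)} the paper likewise extracts, by compactness of the unital completely positive maps in the pointwise weak-operator topology (citing \cite[Theorem 7.4]{paulsen2002completely} rather than re-deriving it), a subnet converging to a limit map $\varphi$ agreeing with $\pi$ on $\M$, and then applies the approximate unique extension property to $\varphi$; the paper dismisses {\rm (ii)}$\Rightarrow${\rm (i)} as trivial, which is exactly your constant-net observation. The only difference is that you spell out the Banach--Alaoglu-style compactness argument and the continuity of conjugation by a fixed unitary, both of which are fine.
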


\begin{proof}
{\rm (i)}$\Rightarrow${\rm (ii)}: Let $\psi_\alpha:\rC^*(\M)\rightarrow B(\H_\pi)$ be a net of unital completely positive maps such that $\psi_\alpha(m)\rightarrow \pi(m)$ in the weak operator topology for every $m\in\M$. By compactness, we may obtain a subnet $(\psi_{\alpha_\eta})_\eta$ of unital completely positive maps that converges to a unital completely positive map $\varphi:\rC^*(\M)\rightarrow B(\H_\pi)$ in the pointwise weak operator topology \cite[Theorem 7.4]{paulsen2002completely}. In other words, we have that $\psi_{\alpha_\eta}(t)\rightarrow\varphi(t)$ in the weak operator topology for every $t\in\rC^*(\M)$. Consequently, we have that $\varphi\mid_\M = \pi\mid_\M$.

Since $\pi$ has the approximate unique extension property, there is a net of unitary operators $u_\beta\in B(\H_\pi)$ such that $u_\beta^*\varphi(t)u_\beta\rightarrow\pi(t)$ in the weak operator topology for every $t\in\rC^*(\M)$. So we conclude that \[\lim_\beta\lim_\eta u_\beta^*\psi_{\alpha_\eta}(t)u_\beta = \lim_\beta u_\beta^*\varphi(t)u_\beta = \pi(t), \ \ \ \ \ \ t\in\rC^*(\M),\]as desired.

{\rm (ii)}$\Rightarrow${\rm (i)}: Trivial.
\end{proof}

We now work towards identifying a large class of $*$-representations that possess the approximate unique extension property. For this, we require that the approximate unique extension property be preserved under approximate unitary equivalence.

\begin{proposition}\label{P:AUEP_aue}
Let $\M\subset B(\H)$ be a unital operator space and let $\pi:\rC^*(\M)\rightarrow B(\H_\pi)$ be a $*$-representation. Suppose that $\pi$ possesses the approximate unique extension property. If $\sigma:\rC^*(\M)\rightarrow B(\H_\sigma)$ is a $*$-representation that is approximately unitarily equivalent to $\pi$, then $\sigma$ also possesses the approximate unique extension property.
\end{proposition}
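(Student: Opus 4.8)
The plan is to transport the problem from $\H_\sigma$ to $\H_\pi$ along the approximate unitary equivalence, apply the approximate unique extension property of $\pi$ there, and then carry the resulting unitaries back. Fix a net of unitaries $v_\gamma:\H_\sigma\to\H_\pi$ witnessing the equivalence, so that $\|v_\gamma^*\pi(t)v_\gamma-\sigma(t)\|\to 0$, equivalently $\|\pi(t)-v_\gamma\sigma(t)v_\gamma^*\|\to 0$, for every $t\in\rC^*(\M)$. Given a unital completely positive map $\psi:\rC^*(\M)\to B(\H_\sigma)$ with $\psi\mid_\M=\sigma\mid_\M$, I would form the transported maps $\psi_\gamma:=v_\gamma\psi(\cdot)v_\gamma^*:\rC^*(\M)\to B(\H_\pi)$, each of which is unital completely positive, and observe that on $\M$ they satisfy $\psi_\gamma(m)=v_\gamma\sigma(m)v_\gamma^*\to\pi(m)$ in norm.

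The next step addresses the fact that the approximate unique extension property of $\pi$ requires a test map that restricts to $\pi$ on $\M$ \emph{exactly}, whereas the $\psi_\gamma$ only agree with $\pi$ on $\M$ in the limit. By compactness of the unital completely positive maps in the point--weak-operator topology \cite[Theorem 7.4]{paulsen2002completely}, I would pass to a subnet $(\psi_{\gamma_\lambda})_\lambda$ converging pointwise in the weak operator topology to a unital completely positive map $\phi:\rC^*(\M)\to B(\H_\pi)$. Since $\psi_{\gamma_\lambda}(m)\to\pi(m)$ in norm for $m\in\M$, uniqueness of limits forces $\phi\mid_\M=\pi\mid_\M$. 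Applying the approximate unique extension property of $\pi$ to $\phi$ then yields a net of unitaries $u_\beta\in B(\H_\pi)$ with $u_\beta^*\phi(t)u_\beta\to\pi(t)$ in the weak operator topology for every $t$.

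It remains to assemble the three nets $(v_\gamma)$, $(u_\beta)$, and $(\psi_{\gamma_\lambda})$ into a single net of unitaries on $\H_\sigma$. The candidate is $w=v_{\gamma_\lambda}^*u_\beta v_\gamma\in B(\H_\sigma)$, for which a direct computation gives
\[
w^*\psi(t)w=v_\gamma^*\,u_\beta^*\bigl(v_{\gamma_\lambda}\psi(t)v_{\gamma_\lambda}^*\bigr)u_\beta\,v_\gamma=v_\gamma^*\,u_\beta^*\psi_{\gamma_\lambda}(t)u_\beta\,v_\gamma.
\]
Reading off the three approximations in order, $\psi_{\gamma_\lambda}(t)\approx\phi(t)$, then $u_\beta^*\phi(t)u_\beta\approx\pi(t)$, then $v_\gamma^*\pi(t)v_\gamma\approx\sigma(t)$ (the last in norm), shows that $w^*\psi(t)w$ can be driven arbitrarily close to $\sigma(t)$ in the weak operator topology. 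To realize this as a genuine net, I would index by triples $(F,E,\eps)$ with $F\subset\rC^*(\M)$ and $E\subset\H_\sigma$ finite and $\eps>0$, directed in the usual way, and for each index choose the parameters in the correct dependent order: first $\gamma$ large so that $\|v_\gamma^*\pi(t)v_\gamma-\sigma(t)\|$ is small on $F$; then $\beta$ large relative to the now-fixed vectors $\{v_\gamma\xi:\xi\in E\}$; and finally $\lambda$ large relative to the fixed vectors $\{u_\beta v_\gamma\xi:\xi\in E\}$.

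The main obstacle is exactly this final bookkeeping: the three convergences live in different topologies (norm for the approximate unitary equivalence, weak operator topology for the other two) and must be invoked in a strict nested order, because each selection fixes the vectors against which the subsequent weak-operator estimate is tested. Since conjugation by a fixed unitary is weak-operator continuous, each individual step is harmless, and the care lies solely in organizing the selection so that the composite unitaries $w$ form a net realizing the iterated limit. Once this is arranged, $w^*\psi(t)w\to\sigma(t)$ in the weak operator topology for every $t\in\rC^*(\M)$, which is precisely the approximate unique extension property for $\sigma$.
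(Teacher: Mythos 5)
Your proposal is correct and follows essentially the same route as the paper's proof: transport $\psi$ to $\H_\pi$ by conjugating with the unitaries witnessing the approximate equivalence, use point--weak-operator compactness of the unital completely positive maps to extract a limit extension $\phi$ of $\pi\mid_\M$, apply the approximate unique extension property of $\pi$ to $\phi$, and then convert the resulting iterated limit into a single net of unitaries (the paper cites Kelley for this last step, where you carry out the bookkeeping by hand). The only cosmetic difference is that the paper first reduces to $\H_\sigma=\H_\pi$, whereas you keep the unitaries $v_\gamma:\H_\sigma\to\H_\pi$ between the two spaces throughout.
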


\begin{proof}
The approximate unique extension property is easily seen to be stable under unitary equivalence. Thus, since there is a unitary operator $U:\H_\sigma\rightarrow\H_\pi$, we may assume for simplicity that $\H_\sigma = \H_\pi$. In this case, there is a net of unitary operators $w_\gamma\in B(\H_\pi)$ such that \[ \lim_\gamma \lVert w_\gamma^*\sigma(t)w_\gamma - \pi(t)\rVert=0, \ \ \ \ \ \ t\in \rC^*(\M).\]In particular, we also have that \[\lim_\gamma\| \sigma(t) - w_\gamma\pi(t) w_\gamma^*\| = 0, \ \ \ \ \ \ t\in\rC^*(\M).\]

Let $\psi: \rC^*(\M)\rightarrow B(\H_\pi)$ be a unital completely positive map such that $\psi\mid_\M = \sigma\mid_\M$. By compactness \cite[Theorem 7.4]{paulsen2002completely}, we may obtain a subnet $(w_{\gamma_\eta}^*\psi(\cdot)w_{\gamma_\eta})_{\eta}$ of unital completely positive maps and a unital completely positive map $\varphi:\rC^*(\M)\rightarrow B(\H_\pi)$ such that \[\lim_\eta w_{\gamma_\eta}^*\psi(t)w_{\gamma_\eta}=\varphi(t), \ \ \ \ \ \ t\in\rC^*(\M),\]in the weak operator topology. In particular, \[ \varphi(m) = \lim_{\eta}w_{\gamma_\eta}^*\psi(m)w_{\gamma_\eta} = \lim_\eta w_{\gamma_\eta}^*\sigma(m) w_{\gamma_\eta} = \pi(m), \ \ \ \ \ \ \ m\in\M.\]As $\pi$ has the approximate unique extension property, there is a net of unitaries $u_\beta\in B(\H_\pi)$ such that \[\pi(t) = \lim_{\beta} u_\beta^* \varphi(t) u_\beta = \lim_\beta \lim_\eta u_\beta^* w_{\gamma_\eta}^* \psi(t) w_{\gamma_\eta} u_\beta, \ \ \ \ \ \ t\in \rC^*(\M).\]Then we see that\[\lim_\gamma \lim_\beta \lim_\eta  w_\gamma u_\beta^* w_{\gamma_\eta}^* \psi(t) w_{\gamma_\eta} u_\beta w_\gamma^* = \lim_\gamma w_\gamma\pi(t)w_\gamma^* = \sigma(t), \ \ \ \ \ \ t\in \rC^*(\M).\]Thus, there is a net of unitaries $v_\alpha:=w_{\gamma_\alpha}u_{\beta_\alpha}^*w_{\gamma_{\eta_\alpha}}^*$ that corresponds to the above iterated limit (for example -- see \cite[pg. 69]{kelley2017general}) and so, we have that \[\lim_\alpha v_\alpha^*\psi(t)v_\alpha = \sigma(t), \ \ \ \ \ \ t\in\rC^*(\M).\]Therefore $\sigma$ has the approximate unique extension property.
\end{proof}

This allows us to obtain a particularly significant class of $*$-representations that possess the approximate unique extension property.

\begin{theorem}\label{T:UEPimplyAUEP}
Let $\M\subset B(\H)$ be a unital operator space. Suppose that $\pi:\rC^*(\M)\rightarrow B(\H_\pi)$ is a $*$-representation that is approximately unitarily equivalent to a $*$-representation with the unique extension property. Then, we have that $\pi$ has the approximate unique extension property.
\end{theorem}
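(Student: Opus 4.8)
The plan is to deduce the theorem from two ingredients already in hand: the trivial observation that the unique extension property implies the approximate unique extension property, together with the stability of the latter under approximate unitary equivalence recorded in Proposition~\ref{P:AUEP_aue}.

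First I would verify that any $*$-representation $\rho$ possessing the unique extension property automatically possesses the approximate unique extension property. Indeed, suppose $\psi:\rC^*(\M)\rightarrow B(\H_\rho)$ is a unital completely positive map with $\psi\mid_\M = \rho\mid_\M$. The unique extension property forces $\psi = \rho$ outright, so the constant net consisting of the identity operator on $\H_\rho$ satisfies $u_\beta^*\psi(t)u_\beta = \rho(t)$ for every $t\in\rC^*(\M)$, and this converges to $\rho(t)$ in the weak operator topology trivially. Hence $\rho$ has the approximate unique extension property.

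Next, let $\rho$ be a $*$-representation with the unique extension property to which $\pi$ is approximately unitarily equivalent, as furnished by the hypothesis. By the preceding step, $\rho$ has the approximate unique extension property. Applying Proposition~\ref{P:AUEP_aue} with $\rho$ in the role of the representation possessing the property and $\pi$ in the role of the representation approximately unitarily equivalent to it, we conclude that $\pi$ has the approximate unique extension property.

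I do not expect a genuine obstacle, since all of the analytic content has been discharged into Proposition~\ref{P:AUEP_aue}, whose proof assembles the required net of unitaries as an iterated limit built from the conjugating unitaries witnessing the approximate unitary equivalence together with those supplied by the approximate unique extension property of $\rho$. The only point worth monitoring is that the orientation of approximate unitary equivalence assumed in the hypothesis (namely, $\pi$ approximately unitarily equivalent to $\rho$) matches exactly the orientation demanded by Proposition~\ref{P:AUEP_aue}, so no separate appeal to symmetry of the relation is required.
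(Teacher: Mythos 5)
Your proposal is correct and follows exactly the paper's own argument: reduce to the case of a representation with the unique extension property (where the identity net works trivially) and then invoke Proposition~\ref{P:AUEP_aue} to transfer the approximate unique extension property across the approximate unitary equivalence. No further comment is needed.
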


\begin{proof}
By Proposition \ref{P:AUEP_aue}, it suffices to show that $*$-representations with the unique extension property have the approximate unique extension property. Thus, we assume that $\pi$ has the unique extension property. In which case, if $\psi: \rC^*(\M)\rightarrow B(\H_\pi)$ is a unital completely positive map satisfying $\psi\mid_\M = \pi\mid_\M$, then $\psi=\pi$ and so we may take $u_\beta$ to be the identity operator.
\end{proof}

We remark that one form of Voiculescu's Theorem states that, whenever $\pi$ and $\sigma$ are approximately unitarily equivalent $*$-representations of a separable $\rC^*$-algebra $\fA$ on a separable Hilbert space, then the limiting difference $\sigma(t)-u_n^*\pi(t)u_n$ can be made compact for every $t\in\fA$ and every $n\in\bN$ \cite[Theorem 1.5]{voiculescu1976non}. A similar statement has been obtained for tuples of separable operators \cite[Theorem 5.2]{davidson2017dilations}. Thus, one may wonder whether the difference in our setting, given by $\pi(t)-u_\beta^*\psi(t)u_\beta$, can be made compact under separability assumptions. However, we do not know if such a refinement is possible in our context due to the application of compactness in Proposition \ref{P:AUEP_aue}.

Since the usual unique extension property is not preserved under approximate unitary equivalence (Example \ref{E:CuntzInf}), by Theorem \ref{T:UEPimplyAUEP} we obtain that the approximate unique extension property is truly different than the usual unique extension property. Moreover, note that whenever $\pi:\rC^*(\M)\rightarrow B(\H_\pi)$ has the approximate unique extension property and $\psi:\rC^*(\M)\rightarrow B(\H_\pi)$ is a completely positive extension of $\pi\mid_\M$, we may conclude by \cite[Lemma 2.1]{kleski2014korovkin} that there is a net of unitaries $u_\beta\in B(\H_\pi)$ such that $u_\beta^*\psi(t)u_\beta\rightarrow \pi(t)$ in the \emph{strong operator topology} for every $t\in\rC^*(\M)$. In particular, one may not interchange the roles of the maps $\psi$ and $\pi$ in Proposition \ref{P:UEPbackwardsApprox}.

Notwithstanding the above differences, the approximate unique extension property still behaves well enough to form a well-behaved class of $*$-representations.

\begin{proposition}\label{P:AUEPSum}
Let $\M\subset B(\H)$ be a unital operator space. If $\{\pi_i:\rC^*(\M)\rightarrow B(\H_i) : i\in I\}$ is a collection of $*$-representations that possess the approximate unique extension property, then $\pi = \bigoplus_{i\in I}\pi_i$ also possesses the approximate unique extension property.
\end{proposition}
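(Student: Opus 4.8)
The plan is to test $\pi=\bigoplus_{i\in I}\pi_i$ against an arbitrary unital completely positive extension of $\pi\mid_\M$ and to build the required unitaries out of the ones supplied by the individual summands. So suppose $\psi:\rC^*(\M)\to B(\H_\pi)$ is unital completely positive with $\psi\mid_\M=\pi\mid_\M$, where $\H_\pi=\bigoplus_{i\in I}\H_i$. Let $P_i$ be the orthogonal projection of $\H_\pi$ onto $\H_i$ and $\iota_i:\H_i\to\H_\pi$ the inclusion. Since $\pi(m)=\bigoplus_i\pi_i(m)$ is block diagonal for $m\in\M$ and $\psi\mid_\M=\pi\mid_\M$, the off-diagonal blocks $P_j\psi(m)P_i$ vanish for $i\neq j$, and the compression $\psi_{ii}:=\iota_i^*\psi(\cdot)\iota_i:\rC^*(\M)\to B(\H_i)$ is unital completely positive with $\psi_{ii}\mid_\M=\pi_i\mid_\M$. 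As each $\pi_i$ has the approximate unique extension property, for every $i$ I obtain a net of unitaries $(u^{(i)}_\beta)_{\beta\in B_i}$ on $\H_i$ with $(u^{(i)}_\beta)^*\psi_{ii}(t)u^{(i)}_\beta\to\pi_i(t)$ in the weak operator topology for all $t$.

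To produce a single net I would index by the product directed set $\Lambda=\prod_{i\in I}B_i$ with the coordinatewise order and set $u_\lambda=\bigoplus_{i\in I}u^{(i)}_{\lambda_i}$, a unitary on $\H_\pi$; write $\phi_\lambda=u_\lambda^*\psi(\cdot)u_\lambda$, which is again unital completely positive. Because $u_\lambda$ is block diagonal, $\iota_i^*\phi_\lambda(s)\iota_i=(u^{(i)}_{\lambda_i})^*\psi_{ii}(s)u^{(i)}_{\lambda_i}$, so along $\Lambda$ the $(i,i)$ block of $\phi_\lambda(s)$ converges weakly to $\pi_i(s)$ for every $s$ and every $i$. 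This settles the diagonal; the crux is to show the off-diagonal blocks of $\phi_\lambda(t)$ die in the limit.

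The main obstacle is precisely this off-diagonal control, and I would resolve it with the Kadison--Schwarz inequality rather than any multiplicativity of $\psi$ (which is unavailable). Fix $i$ and a vector $\xi\in\H_i\subset\H_\pi$. From $\phi_\lambda(t^*t)\geq\phi_\lambda(t)^*\phi_\lambda(t)$, pairing against $\xi$ gives
\[
\langle\phi_\lambda(t^*t)\xi,\xi\rangle\ \geq\ \|\phi_\lambda(t)\xi\|^2\ =\ \sum_{j\in I}\|P_j\phi_\lambda(t)\xi\|^2,
\]
whence
\[
\sum_{j\neq i}\|P_j\phi_\lambda(t)\xi\|^2\ \leq\ \langle\phi_\lambda(t^*t)\xi,\xi\rangle-\|P_i\phi_\lambda(t)\xi\|^2 .
\]
As $\lambda$ increases, $\langle\phi_\lambda(t^*t)\xi,\xi\rangle\to\langle\pi_i(t^*t)\xi,\xi\rangle=\|\pi_i(t)\xi\|^2$ by the diagonal convergence, while $P_i\phi_\lambda(t)\xi$ corresponds to $(u^{(i)}_{\lambda_i})^*\psi_{ii}(t)u^{(i)}_{\lambda_i}\xi\to\pi_i(t)\xi$ weakly, so weak lower semicontinuity of the norm gives $\|\pi_i(t)\xi\|\leq\liminf_\lambda\|P_i\phi_\lambda(t)\xi\|$. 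Hence the right-hand side has nonpositive limit superior and is nonnegative, forcing $\sum_{j\neq i}\|P_j\phi_\lambda(t)\xi\|^2\to0$. Thus every off-diagonal block of $\phi_\lambda(t)$ tends to $0$ in norm on $\H_i$, while the diagonal block tends weakly to $\pi_i(t)$.

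Finally I would assemble these facts. For $\xi$ in a single summand $\H_i$ the estimate shows $\phi_\lambda(t)\xi\to\pi(t)\xi=\pi_i(t)\xi$ weakly, and by linearity the same holds for $\xi$ in the algebraic direct sum of the $\H_i$, which is dense in $\H_\pi$. Since $\|\phi_\lambda(t)\|\leq\|t\|$ uniformly in $\lambda$, a routine density argument upgrades this to $\phi_\lambda(t)\to\pi(t)$ in the weak operator topology for every $t\in\rC^*(\M)$, so $(u_\lambda)_\lambda$ witnesses the approximate unique extension property for $\pi$. Should one prefer to bypass the lower-semicontinuity step, the same conclusion follows by first upgrading each individual convergence to the strong operator topology via \cite[Lemma 2.1]{kleski2014korovkin}, which makes $\|P_i\phi_\lambda(t)\xi\|^2\to\|\pi_i(t)\xi\|^2$ directly.
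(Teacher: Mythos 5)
Your proof is correct, and it follows the same overall strategy as the paper: compress $\psi$ to each summand, invoke the approximate unique extension property of each $\pi_i$ to get block-diagonal unitaries, observe that the diagonal blocks converge, and kill the off-diagonal blocks with the Schwarz inequality. The two places where you genuinely diverge are worth noting. First, you assemble the single net by indexing over the product directed set $\prod_{i\in I}B_i$ with the coordinatewise order, whereas the paper runs over finite subsets of $I$ with iterated limits and then appeals to Kelley's theorem on iterated limits to produce one net; your route is cleaner and avoids that machinery, and it is valid since each coordinate projection $\Lambda\to B_i$ is monotone and cofinal, so the diagonal convergence survives. Second, you apply the Schwarz inequality to the conjugated maps $\phi_\lambda$ themselves and close the argument with weak lower semicontinuity of the norm (or, in your fallback, with the WOT-to-SOT upgrade of \cite[Lemma 2.1]{kleski2014korovkin}), whereas the paper first extracts, by compactness of the space of unital completely positive maps in the pointwise weak operator topology, a subnet converging to a limit map $\varphi$ and only then applies Schwarz to $\varphi$ to show $(I-P_{\H_i})\varphi(t)P_{\H_i}=0$. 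Your version dispenses with the compactness/subnet-extraction step entirely, so the witnessing net is the full product net rather than a subnet; since the conclusion is only weak operator convergence either way, nothing stronger is gained, but the argument is more self-contained. One small point of care: your limsup estimate needs $\limsup_\lambda\bigl(\langle\phi_\lambda(t^*t)\xi,\xi\rangle-\|P_i\phi_\lambda(t)\xi\|^2\bigr)\leq\limsup_\lambda\langle\phi_\lambda(t^*t)\xi,\xi\rangle-\liminf_\lambda\|P_i\phi_\lambda(t)\xi\|^2\leq 0$, which is exactly what your lower semicontinuity step supplies, so the squeeze on the nonnegative off-diagonal sum is legitimate.
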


\begin{proof}
Let $\psi:\rC^*(\M)\rightarrow B(\bigoplus_{i\in I}\H_i)$ be a unital completely positive map satisfying $\psi\mid_\M = \pi\mid_\M$. For each $i$, define a unital completely positive map \[\psi_i:\rC^*(\M)\rightarrow B(\H_i), \ \ \ \ \ \ t\mapsto P_{\H_i}\psi(t)\mid_{\H_i},\] and note that $\psi_i\mid_\M= \pi_i\mid_\M$. In turn, as each $\pi_i$ possesses the approximate unique extension property, we may obtain a directed set $\Lambda_i$ and a corresponding net of unitary operators $u_{\lambda_i}\in B(\H_i)$, $\lambda_i\in\Lambda_i,$ such that \[\lim_{\lambda_i\in\Lambda_i}u_{\lambda_i}^*\psi_i(t)u_{\lambda_i}= \pi_i(t), \ \ \ \ \ \ t\in\rC^*(\M),\]where convergence is in the weak operator topology.

Now, let $\F$ denote the directed set consisting of finite subsets of $I$. Given $F = \{ i_1, \ldots, i_n\}\in\F$ and $\lambda_{i_k}\in\Lambda_{i_k}$, define a unitary operator on $\bigoplus_{i\in I}\H_i$ by \[u_{F, \lambda_{i_1}, \ldots, \lambda_{i_n}} = \bigoplus_{i\in \F}u_{\lambda_i}\oplus\bigoplus_{i\notin\F} I_{\H_i}\]where $I_{\H_i}$ denotes the identity operator on $\H_i$. For $t\in\rC^*(\M)$, we have that\[ P_{\H_i}u_{F,\lambda_{i_1}, \ldots, \lambda_{i_n}}^* \psi(t) u_{F,\lambda_{i_1}, \ldots, \lambda_{i_n}}\mid_{\H_i} = \begin{cases}u_{\lambda_i}^*\psi_i(t)u_{\lambda_i}, & i\in F\\ \psi_i(t), & i\notin F.\end{cases}\]Thus, we must have that the iterated limit \[\lim_{\lambda_{i_1}\in\Lambda_{i_1}}\lim_{\lambda_{i_2}\in\Lambda_{i_2}}\ldots\lim_{\lambda_{i_n}\in\Lambda_{i_n}} P_{\H_i}u_{F,\lambda_{i_1}, \ldots, \lambda_{i_n}}^* \psi(t) u_{F,\lambda_{i_1}, \ldots, \lambda_{i_n}}\mid_{\H_i}\]exists in the weak operator topology for any $i\in I$ and any $F=\{i_1, \ldots, i_n\}\in\F$. Indeed, if $i\in\F$, then the limit equals $\pi_i(t)$ and, when $i\notin\F$, we have that the limit is equal to $\psi_i(t)$. So\[\lim_{\{i_1,\ldots,i_n\}\in\F}\lim_{\lambda_{i_1}\in\Lambda_{i_1}}\lim_{\lambda_{i_2}\in\Lambda_{i_2}}\ldots\lim_{\lambda_{i_n}\in\Lambda_{i_n}} P_{\H_i}u_{F,\lambda_{i_1}, \ldots, \lambda_{i_n}}^* \psi(t) u_{F,\lambda_{i_1}, \ldots, \lambda_{i_n}}\mid_{\H_i} = \pi_i(t)\]for every $i\in I$. Thus, by \cite[pg. 69]{kelley2017general}, there is a net of unitary operators $u_\beta\in B(\bigoplus_{i\in I}\H_i)$ such that \[ \lim_\beta P_{\H_i} u_\beta^* \psi(t) u_\beta\mid_{\H_i} = \pi_i(t), \ \ \ \ \ \ t\in\rC^*(\M),\]for every $i\in I$.

By compactness, we may extract a subnet $(u_{\beta_\eta}^*\psi(\cdot)u_{\beta_\eta})_\eta$ and a unital completely positive map $\varphi:\rC^*(\M)\rightarrow B(\bigoplus_{i\in I}\H_i)$ such that \[\lim_\eta u_{\beta_\eta}^*\psi(t)u_{\beta_\eta}=\varphi(t), \ \ \ \ \ \ \ t\in\rC^*(\M),\]in the weak operator topology \cite[Theorem 7.4]{paulsen2002completely}. From here, the argument is a direct adaptation of \cite[Proposition 4.4]{arveson2011noncommutative}. Indeed, observe that $P_{\H_i}\varphi(t)P_{\H_i} = \pi(t)P_{\H_i}$ for each $i\in I$ and so, by the Schwarz inequality, \begin{align*}
P_{\H_i}\varphi(t)^*(I- P_{\H_i})\varphi(t) P_{\H_i} & = P_{\H_i}\varphi(t)^*\varphi(t)P_{\H_i} - P_{\H_i} \varphi(t)^* P_{\H_i} \varphi(t)P_{\H_i}\\
& \leq P_{\H_i} \varphi(t^*t)P_{\H_i}  - P_{\H_i}\varphi(t)^* P_{\H_i} \varphi(t) P_{\H_i}\\
& = \pi(t^*t)P_{\H_i} - \pi(t)^*\pi(t)P_{\H_i}=0.
\end{align*}Therefore $(I-P_{\H_i})\varphi(t)P_{\H_i} = 0$ and so \[ \lim_\eta u_{\beta_\eta}^*\psi(t)u_{\beta_\eta} = \varphi(t)=\sum_{i\in I}\varphi(t) P_{\H_i} = \sum_{i\in I} P_{\H_i}\varphi(t)P_{\H_i} = \sum_{i\in I}\pi(t)P_{\H_i} = \pi(t).\]
\end{proof}

Recall that Lemma \ref{L:UEP} states that the implication in Proposition \ref{P:AUEPSum} is known to be an equivalence when the approximate unique extension property is replaced by the usual unique extension property. On the contrary, the converse will be shown to be far from true for $*$-representations that possess the approximate unique extension property (Proposition \ref{P:AUEPsubrep}). First, we require a consequence to Proposition \ref{P:AUEPSum} that allows us to construct many more $*$-representations that possess the approximate unique extension property.

\begin{proposition}\label{P:AUEPker}
Let $\M\subset B(\H)$ be a unital operator space. Suppose that $\pi:\rC^*(\M)\rightarrow B(\H_\pi)$ is a $*$-representation possessing the approximate unique extension property. Then, the following statements hold.\begin{enumerate}[\rm (i)]
\item If $\sigma:\rC^*(\M)\rightarrow B(\H_\sigma)$ is a $*$-representation such that $\ker\sigma = \ker\pi$, then there is some cardinal $\kappa$ such that $\sigma^{(\kappa)}$ possesses the approximate unique extension property.
\item If $\rho: \rC^*_e(\M)\rightarrow B(\H_\rho)$ is an isometric $*$-representation, then there is a cardinal $\mu$ such that $\rho^{(\mu)}$ possesses the approximate unique extension property.
\end{enumerate}
\end{proposition}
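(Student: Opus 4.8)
The plan is to reduce both statements to Hadwin's rank criterion (Theorem \ref{T:HadwinAUE}) combined with the two stability results already in hand: that the approximate unique extension property passes to arbitrary direct sums (Proposition \ref{P:AUEPSum}) and is preserved under approximate unitary equivalence (Proposition \ref{P:AUEP_aue}).

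For part {\rm (i)}, write $\fJ = \ker\pi = \ker\sigma$ and set the infinite cardinal $\kappa = \max\{\aleph_0, \dim\H_\pi, \dim\H_\sigma\}$. First I would verify that $\pi^{(\kappa)}$ and $\sigma^{(\kappa)}$ have pointwise equal rank. If $t\in\fJ$, both amplifications annihilate $t$, so both ranks are $0$. If $t\notin\fJ$, then $\pi(t)$ and $\sigma(t)$ are nonzero and each has rank between $1$ and $\kappa$; passing to $\kappa$ copies and invoking the cardinal identity $\kappa\cdot r = \kappa$ for $1\leq r\leq\kappa$ shows that both $\text{rank}(\pi^{(\kappa)}(t))$ and $\text{rank}(\sigma^{(\kappa)}(t))$ equal $\kappa$. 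By Theorem \ref{T:HadwinAUE}, $\pi^{(\kappa)}$ and $\sigma^{(\kappa)}$ are then approximately unitarily equivalent. Since $\pi^{(\kappa)}$ inherits the approximate unique extension property from $\pi$ by Proposition \ref{P:AUEPSum}, Proposition \ref{P:AUEP_aue} transfers it to $\sigma^{(\kappa)}$.

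For part {\rm (ii)}, I would bootstrap from part {\rm (i)} using a well-behaved isometric representation of the envelope. By Theorem \ref{T:HR} {\rm (i)} there is an isometric $*$-representation $\sigma_0$ of $\rC^*_e(\M)$ possessing the unique extension property with respect to $\varepsilon(\M)$, and by Theorem \ref{T:UEPimplyAUEP} this $\sigma_0$ also possesses the approximate unique extension property. Applying part {\rm (i)} in the envelope, where the ambient $\rC^*$-algebra is $\rC^*_e(\M)=\rC^*(\varepsilon(\M))$, with $\sigma_0$ playing the role of the representation carrying the approximate unique extension property and $\rho$ playing the role of $\sigma$ (both are isometric, so $\ker\rho = \{0\} = \ker\sigma_0$), produces a cardinal $\mu$ for which $\rho^{(\mu)}$ has the approximate unique extension property with respect to $\varepsilon(\M)$.

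The only delicate point is the cardinal bookkeeping in part {\rm (i)}: one must confirm that a single cardinal $\kappa$ dominating both $\dim\H_\pi$ and $\dim\H_\sigma$ simultaneously collapses every nonzero rank of $\pi$ and of $\sigma$ to exactly $\kappa$, so that $\pi^{(\kappa)}$ and $\sigma^{(\kappa)}$ agree in rank at every point of $\rC^*(\M)$ rather than only off the common kernel. This is precisely the identity $\kappa\cdot r = \kappa$ for $1\leq r\leq\kappa$ recorded above, and once it is secured the remaining steps are direct appeals to results already established, so I anticipate no further obstruction.
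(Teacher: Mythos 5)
Your proposal is correct and follows essentially the same route as the paper: the same cardinal $\kappa = \max\{\aleph_0,\dim\H_\pi,\dim\H_\sigma\}$, the same rank computation feeding into Theorem \ref{T:HadwinAUE}, and the same combination of Propositions \ref{P:AUEPSum} and \ref{P:AUEP_aue} for part {\rm (i)}, with part {\rm (ii)} bootstrapped from part {\rm (i)} via the isometric representation with the unique extension property supplied by Theorem \ref{T:HR} {\rm (i)}. Your explicit invocation of Theorem \ref{T:UEPimplyAUEP} in part {\rm (ii)} only makes visible a step the paper leaves implicit.
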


\begin{proof}
{\rm (i)}: Define an infinite cardinal $\kappa = \max\{\dim\H_\pi,\dim\H_\sigma, \aleph_0\}$. For each $t\in\rC^*(\M)\setminus \ker\sigma$, we have that \[\text{rank}(\sigma^{(\kappa)}(t)) =\kappa\text{rank}(\sigma(t)) = \kappa,\]and likewise, $\text{rank}(\pi^{(\kappa)}(t)) = \kappa.$ Since $\ker\pi=\ker\sigma$, Theorem \ref{T:HadwinAUE} implies that $\sigma^{(\kappa)}$ is approximately unitarily equivalent to $\pi^{(\kappa)}$. By Propositions \ref{P:AUEP_aue} and \ref{P:AUEPSum}, we find that $\sigma^{(\kappa)}$ possesses the approximate unique extension property.

{\rm (ii)}: By Theorem \ref{T:HR} {\rm (i)}, there is an isometric $*$-representation $\beta:\rC^*_e(\M)\rightarrow B(\H_\beta)$ that possesses the unique extension property. Thus, it follows by {\rm (i)} that there is a cardinal $\mu$ such that $\rho^{(\mu)}$ possesses the approximate unique extension property.
\end{proof}

Due to Proposition \ref{P:AUEPker} {\rm (ii)}, we may confirm that the approximate unique extension property is badly behaved with respect to sub-representations.

\begin{proposition}\label{P:AUEPsubrep}
Let $\M\subset B(\H)$ be a unital operator space. Assume that whenever $\pi:\rC^*_e(\M)\rightarrow B(\H_\pi)$ is a $*$-representation possessing the approximate unique extension property and $\F\subset\H_\pi$ is a closed subspace that is reducing for $\text{Im}\pi$, then we have that the $*$-representation\[t\mapsto\pi(t)\mid_\F, \ \ \ \ \ \ \ t\in\rC^*_e(\M),\]also possesses the approximate unique extension property. Then, we have that every $*$-representation of $\rC^*_e(\M)$ has the approximate unique extension property.
\end{proposition}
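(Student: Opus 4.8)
The plan is to reduce the case of an arbitrary $*$-representation to one that is already known to possess the approximate unique extension property, and then to invoke the standing subrepresentation hypothesis to descend. The engine of the argument is Proposition \ref{P:AUEPker} {\rm (ii)}: amplifications of isometric representations automatically have the property, so it suffices to exhibit an arbitrary given representation as a restriction, to a reducing subspace, of a suitable amplification of an isometric representation.

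First I would fix an arbitrary $*$-representation $\sigma:\rC^*_e(\M)\rightarrow B(\H_\sigma)$ and choose any isometric (equivalently, faithful) $*$-representation $\rho:\rC^*_e(\M)\rightarrow B(\H_\rho)$; such a $\rho$ exists since every $\rC^*$-algebra admits a faithful representation. I would then form the direct sum $\sigma\oplus\rho$ and note that $\ker(\sigma\oplus\rho)=\ker\sigma\cap\ker\rho=\{0\}$, so that $\sigma\oplus\rho$ is an injective, hence isometric, $*$-representation of $\rC^*_e(\M)$.

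Next, applying Proposition \ref{P:AUEPker} {\rm (ii)} to the isometric representation $\sigma\oplus\rho$, I obtain a cardinal $\mu$ for which $(\sigma\oplus\rho)^{(\mu)}$ possesses the approximate unique extension property. Realizing this amplification on $\bigoplus_{\mu}(\H_\sigma\oplus\H_\rho)$, the representation acts diagonally on the summands and does not mix them, so any single copy of $\H_\sigma$ sitting inside the first summand $\H_\sigma\oplus\H_\rho$ is a closed subspace reducing $\text{Im}(\sigma\oplus\rho)^{(\mu)}$, and the restriction of $(\sigma\oplus\rho)^{(\mu)}$ to that subspace is exactly $\sigma$. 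Invoking the standing hypothesis with $\pi=(\sigma\oplus\rho)^{(\mu)}$ and $\F$ this copy of $\H_\sigma$, I conclude that $\sigma$ has the approximate unique extension property, which is what is required.

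The argument contains no serious obstacle once the construction is arranged; the only point demanding care is to ensure that $\sigma$ genuinely occurs as a restriction, to a reducing subspace, of a representation already known to have the property, so that the hypothesis applies verbatim. The role of adjoining $\rho$ is precisely to make $\sigma\oplus\rho$ isometric and thereby unlock Proposition \ref{P:AUEPker} {\rm (ii)}; without such a faithful summand there would be no a priori route to the property for any amplification of $\sigma$.
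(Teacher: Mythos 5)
Your proof is correct and follows essentially the same strategy as the paper's: absorb the given representation into a large faithful representation that is known to have the approximate unique extension property (via the amplification machinery of Proposition \ref{P:AUEPker}, which rests on Hadwin's theorem and Propositions \ref{P:AUEP_aue} and \ref{P:AUEPSum}), and then descend to the reducing subspace using the standing hypothesis. The paper instead takes $\beta^{(\kappa)}\oplus\pi$ with $\beta$ isometric with the property and invokes Hadwin directly, but this is only a cosmetic difference from your construction $(\sigma\oplus\rho)^{(\mu)}$.
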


\begin{proof}
Let $\beta$ be an isometric $*$-representation of $\rC^*_e(\M)$ on a separable Hilbert space that possesses the approximate unique extension property (which exists by either Theorem \ref{T:HR} or Proposition \ref{P:AUEPker}) and let $\pi:\rC^*_e(\M)\rightarrow B(\H_\pi)$ be a $*$-representation. Then, there is a cardinal $\kappa$ such that $\beta^{(\kappa)}$ has the approximate unique extension property by Proposition \ref{P:AUEPSum}. By Theorem \ref{T:HadwinAUE}, we have that $\beta^{(\kappa)}\oplus\pi$ is approximately unitarily equivalent to $\beta^{(\kappa)}$. So $\beta^{(\kappa)}\oplus\pi$ possesses the approximate unique extension property by Propostion \ref{P:AUEP_aue}. By the assumption, we conclude that $\pi$ has the approximate unique extension property. 
\end{proof}

We conclude this section by connecting our work with Arveson's hyperrigidity conjecture. To start, we note the following.

\begin{proposition}\label{P:AUEPHR}
Let $\M\subset B(\H)$ be a separable unital operator space. Then, the following statements are equivalent.\begin{enumerate}[\rm (i)]
\item Every irreducible $*$-representation of $\rC^*(\M)$ possesses the approximate unique extension property.
\item Every $*$-representation of $\rC^*(\M)$ possesses the approximate unique extension property.
\end{enumerate}
\end{proposition}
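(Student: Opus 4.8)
The implication (ii)$\Rightarrow$(i) is immediate, since the irreducible $*$-representations form a subclass of all $*$-representations of $\rC^*(\M)$. All of the content lies in proving (i)$\Rightarrow$(ii).

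The plan is as follows. Let $\pi:\rC^*(\M)\rightarrow B(\H_\pi)$ be an arbitrary $*$-representation; I want to show that $\pi$ possesses the approximate unique extension property. The only real difficulty is that $\H_\pi$ need not be separable, so I cannot appeal to Voiculescu's Theorem on $\H_\pi$ directly. I would repair this by first passing to a cyclic decomposition. Since $\M$ is separable, $\rC^*(\M)$ is a separable $\rC^*$-algebra. Decomposing $\H_\pi$ into an orthogonal direct sum of cyclic subspaces for $\pi$ yields a decomposition $\pi = \bigoplus_{j} \pi_j$ into cyclic subrepresentations. Crucially, each cyclic subspace $\overline{\pi(\rC^*(\M))\xi_j}$ is separable precisely because $\rC^*(\M)$ is separable, so each $\pi_j$ acts on a separable Hilbert space.

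Next I would apply Voiculescu's Theorem \cite{voiculescu1976non} to each piece. Because $\rC^*(\M)$ and $\H_{\pi_j}$ are both separable, $\pi_j$ lies in the point--norm closure of the unitary orbit of some direct sum $\sigma_j = \bigoplus_k \rho_{j,k}$ of irreducible $*$-representations; that is, $\pi_j$ is approximately unitarily equivalent to $\sigma_j$. By hypothesis (i), each irreducible $\rho_{j,k}$ possesses the approximate unique extension property, whence Proposition \ref{P:AUEPSum} gives that the direct sum $\sigma_j$ does as well. Proposition \ref{P:AUEP_aue} then transfers the property across the approximate unitary equivalence, so each $\pi_j$ possesses the approximate unique extension property.

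Finally I would reassemble the pieces: since $\pi = \bigoplus_j \pi_j$ is a direct sum of $*$-representations each possessing the approximate unique extension property, a second application of Proposition \ref{P:AUEPSum} yields that $\pi$ itself possesses it. The main subtlety---rather than a genuine obstacle---is exactly the passage through the non-separable case; the cyclic decomposition is what converts a single, possibly non-separable, representation into a (possibly uncountable) family of separable cyclic pieces to which Voiculescu's Theorem applies. The rest is bookkeeping, as the three structural inputs---stability of the approximate unique extension property under approximate unitary equivalence and under arbitrary direct sums, together with Voiculescu's Theorem---are all already available.
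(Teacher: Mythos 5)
Your argument is correct and is essentially the paper's own proof: decompose $\pi$ into a direct sum of subrepresentations on separable Hilbert spaces (the paper does not spell out the cyclic decomposition, but that is exactly what it is), apply Voiculescu's Theorem to each summand, and then invoke Propositions \ref{P:AUEP_aue} and \ref{P:AUEPSum} to transfer and reassemble the approximate unique extension property. No meaningful differences.
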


\begin{proof}
For the non-trivial direction, assume that every irreducible $*$-representation of $\rC^*(\M)$ possesses the approximate unique extension property and let $\pi:\rC^*(\M)\rightarrow B(\H_\pi)$ be a $*$-representation. As $\rC^*(\M)$ is separable, we may express $\pi = \bigoplus_{i\in I}\pi_i$ where $\pi_i:\rC^*(\M)\rightarrow B(\H_i)$ is a $*$-representation on a separable Hilbert space $\H_i$. Since $\H_i$ is separable, we have that $\pi_i$ is approximately unitarily equivalent to a direct sum of irreducible $*$-representations \cite[Corollary 1.6]{voiculescu1976non}. By the assumption, Proposition \ref{P:AUEP_aue}, and Proposition \ref{P:AUEPSum}, we may conclude that each $\pi_i$ has the approximate unique extension property. Thus, $\pi$ has the approximate unique extension property by Proposition \ref{P:AUEPSum}.
\end{proof}

We remain unsure if separability can be removed in Proposition \ref{P:AUEPHR}. Indeed, we required work of Voiculescu \cite{voiculescu1976non}, which states that every $*$-representation is approximately unitarily equivalent to a direct sum of irreducible $*$-representations provided that certain separability conditions are met. However, we are unaware if there is an analogue to this fact in the non-separable setting.

Also, note that Proposition \ref{P:AUEPHR} may be viewed as a positive answer to an approximate version of Arveson's hyperrigidity conjecture. Indeed, as reflected in Proposition \ref{P:AUEPHR}, to determine the class of $*$-representations that possess the approximate unique extension property, it is sufficient to study those \emph{irreducible} $*$-representations that possess the approximate unique extension property.

We may now state the main result of this section.

\begin{theorem}\label{T:AUESummary}
Let $\M\subset B(\H)$ be a separable unital operator space. Consider the following statements:\begin{enumerate}[\rm (i)]
\item The operator space $\M$ is hyperrigid.
\item Every irreducible $*$-representation of $\rC^*(\M)$ is a boundary representation.
\item Every irreducible $*$-representation of $\rC^*(\M)$ is approximately unitarily equivalent to a boundary representation.
\item Every $*$-representation of $\rC^*(\M)$ has the approximate unique extension property.
\end{enumerate}Then, we have that {\rm (i)}$\Rightarrow${\rm (ii)}$\Rightarrow${\rm (iii)}$\Rightarrow${\rm (iv)} and {\rm (iii)}$\not\Rightarrow${\rm (ii)}.\end{theorem}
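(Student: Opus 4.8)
The plan is to establish the three implications and the non-implication by assembling results already proved in this section, together with the standard facts recorded in Section \ref{S:AUE_UEP}. The overall strategy is that each implication should follow almost formally once the right prior result is invoked, so the bulk of the work is in choosing the correct machinery rather than in fresh computation.

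First I would handle {\rm (i)}$\Rightarrow${\rm (ii)}. By Theorem \ref{T:HR} {\rm (iii)} we may assume, at the cost of replacing $\M$ by its image in the $\rC^*$-envelope, that $\rC^*(\M)=\rC^*_e(\M)$; hyperrigidity means every $*$-representation has the unique extension property, and in particular every irreducible one does, which is exactly the definition of a boundary representation. A minor care point here is that hyperrigidity is a property of $\M$ inside $\rC^*(\M)$, so one should check that reducing to the $\rC^*$-envelope does not alter the class of irreducible $*$-representations under consideration; this is the content of Theorem \ref{T:HR} {\rm (ii)}, which lets boundary representations factor through $\rC^*_e(\M)$.

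Next, {\rm (ii)}$\Rightarrow${\rm (iii)} is immediate, since any $*$-representation is trivially approximately unitarily equivalent to itself (take the net of unitaries to be constant), so if every irreducible $*$-representation is already a boundary representation then in particular each is approximately unitarily equivalent to one. For {\rm (iii)}$\Rightarrow${\rm (iv)}, I would argue representation-by-representation: an arbitrary irreducible $\sigma$ is, by hypothesis, approximately unitarily equivalent to a boundary representation $\beta$; since $\beta$ has the unique extension property, Theorem \ref{T:UEPimplyAUEP} gives that $\beta$ has the approximate unique extension property, and then Proposition \ref{P:AUEP_aue} transports this along the approximate unitary equivalence to conclude $\sigma$ has the approximate unique extension property. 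Thus every irreducible $*$-representation has the approximate unique extension property, and Proposition \ref{P:AUEPHR} (which is exactly where separability of $\M$ is used) upgrades this to all $*$-representations, giving {\rm (iv)}.

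The remaining claim, {\rm (iii)}$\not\Rightarrow${\rm (ii)}, is the one requiring a concrete witness rather than a formal deduction, and I expect it to be the main obstacle in the sense that it forces one to exhibit an operator space where the two conditions genuinely separate. Here I would invoke Example \ref{E:CuntzInf}: taking $\A$ generated by the Cuntz isometries inside $\O_\infty$, simplicity of $\O_\infty$ forces all irreducible $*$-representations to be approximately unitarily equivalent, and since at least one of them \emph{is} a boundary representation, every irreducible $*$-representation is approximately unitarily equivalent to a boundary representation, establishing {\rm (iii)}. Yet by Popescu's result there is a distinguished irreducible $*$-representation (the one whose defining isometries satisfy $\sum_{n=1}^\infty W_nW_n^*\neq I$) that fails to be a boundary representation, so {\rm (ii)} does not hold. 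The only subtlety to verify is that $\A$ is separable so that the hypotheses of the theorem apply to this example, which holds since $\O_\infty$ is countably generated.
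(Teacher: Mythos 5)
Your proposal is correct and follows essentially the same route as the paper: (i)$\Rightarrow$(ii)$\Rightarrow$(iii) by definition, (iii)$\Rightarrow$(iv) via Theorem \ref{T:UEPimplyAUEP} (with Proposition \ref{P:AUEP_aue}) and Proposition \ref{P:AUEPHR}, and (iii)$\not\Rightarrow$(ii) via Example \ref{E:CuntzInf}. The only quibble is that your reduction to the $\rC^*$-envelope in (i)$\Rightarrow$(ii) is unnecessary (hyperrigidity and boundary representations are both defined relative to $\rC^*(\M)$, so the implication is purely definitional) and the cited Theorem \ref{T:HR} (iii) runs in the wrong direction for that purpose, but since the detour is not needed the argument stands.
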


\begin{proof}
The directions {\rm (i)}$\Rightarrow${\rm (ii)}$\Rightarrow${\rm (iii)} are trivial and {\rm (iii)}$\Rightarrow${\rm (iv)} is a consequence to Theorem \ref{T:UEPimplyAUEP} and Proposition \ref{P:AUEPHR}. Finally, {\rm (iii)}$\not\Rightarrow${\rm (ii)} by considering Example \ref{E:CuntzInf}.
\end{proof}

Later, it will be revealed that the approximate unique extension property coincides with the usual unique extension property for those irreducible $*$-representations whose image contains a compact operator (Theorem \ref{T:Compact}). Consequently, statements {\rm (ii)}, {\rm (iii)}, and {\rm (iv)} will be equivalent whenever $\rC^*(\M)$ is postliminal.

\section{Split sequences for completely positive extensions}\label{S:SplitSeq}

Suppose that $\pi$ has the approximate unique extension property with respect to a unital operator space $\M$. We will now present a restriction on the structure of the unital completely positive extensions of $\pi\mid_\M$. This will be the driving force of our arguments throughout this section and is the main result of our work.

For this recall that, given $\rC^*$-algebras $\fA\subset\fB$, a \emph{conditional expectation} is a contractive linear projection $E:\fB\rightarrow\fA$. When $E$ is multiplicative, we say that the map is \emph{homomorphic}. By Tomiyama's Theorem \cite[Theorem 1.5.10]{brown2008textrm}, a conditional expectation is automatically completely positive and satisfies $E(aba') = a E(b)a'$ for every $a,a'\in\fA$ and $b\in\fB$. Consequently, a homomorphic conditional expectation is a $*$-homomorphism.

\begin{theorem}\label{T:AUEPsequence}
Let $\M\subset B(\H)$ be a unital operator space. Suppose that $\pi: \rC^*(\M)\rightarrow B(\H_\pi)$ is a $*$-representation possessing the approximate unique extension property and let $\psi: \rC^*(\M)\rightarrow B(\H_\pi)$ be a unital completely positive map satisfying $\psi\mid_\M = \pi\mid_\M$. Then, the following statements hold.\begin{enumerate}[\rm (i)]
\item There is a homomorphic conditional expectation $\Gamma_\psi: \rC^*(\text{Im}\psi)\rightarrow \text{Im}\pi$ satisfying $\Gamma_\psi\circ\psi = \pi$.
\item If $\H_\pi$ is separable, then $\text{rank}(\pi(t))\leq \text{rank}(\psi(t))$ for each $t\in \rC^*(\M)$.
\item We have that $\ker\Gamma_\psi$ is the smallest closed two-sided ideal of $\rC^*(\text{Im}\psi)$ that contains $\text{Im}(\psi-\pi)$.
\item We have that $\text{Im}(id-\Gamma_\psi) = \ker\Gamma_\psi$ and $\rC^*(\text{Im}\psi) = \ker\Gamma_\psi+\text{Im}\pi$.
\item We have that $\psi = \pi$ if and only if $\Gamma_\psi$ is isometric.
\end{enumerate}
\end{theorem}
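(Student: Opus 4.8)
The plan is to build the conditional expectation $\Gamma_\psi$ directly from the defining net of the approximate unique extension property and then to read off (ii)--(v) from its structure. As noted after Theorem~\ref{T:UEPimplyAUEP} (using \cite[Lemma 2.1]{kleski2014korovkin}), the net of unitaries $u_\beta\in B(\H_\pi)$ supplied by the approximate unique extension property can be chosen so that $u_\beta^*\psi(t)u_\beta\to\pi(t)$ in the \emph{strong} operator topology for every $t\in\rC^*(\M)$. I would then consider the unital $*$-automorphisms $\operatorname{Ad}u_\beta\colon a\mapsto u_\beta^*au_\beta$ of $B(\H_\pi)$ and define $\Gamma_\psi$ as their limit on $\rC^*(\text{Im}\psi)$. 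The decisive point is that strong convergence passes through products of uniformly bounded nets: for a word one gets $u_\beta^*\psi(t_1)\cdots\psi(t_k)u_\beta=\prod_j\big(u_\beta^*\psi(t_j)u_\beta\big)\to\pi(t_1)\cdots\pi(t_k)=\pi(t_1\cdots t_k)$ strongly. Hence on the dense $*$-subalgebra $\mathcal{A}_0$ generated by $\text{Im}\psi$ the limit exists, is multiplicative and $*$-preserving, takes values in $\text{Im}\pi$, and is contractive because each $\operatorname{Ad}u_\beta$ is isometric while the norm is weakly lower semicontinuous. It therefore extends to a unital $*$-homomorphism $\Gamma_\psi\colon\rC^*(\text{Im}\psi)\to\text{Im}\pi$ with $\Gamma_\psi\circ\psi=\pi$. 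Finally, since $\pi(m)=\psi(m)$ for $m\in\M$, every word in $\psi(\M)$ coincides with the same word in $\pi(\M)$ and is fixed by $\Gamma_\psi$; by density $\Gamma_\psi$ restricts to the identity on $\text{Im}\pi$, so it is idempotent and thus a homomorphic conditional expectation (automatically completely positive by Tomiyama's Theorem). This proves (i).

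For (ii), I would use that $\pi(t)=\Gamma_\psi(\psi(t))$ is the strong limit of $u_\beta^*\psi(t)u_\beta$, each of which has the same rank as $\psi(t)$ since unitary conjugation preserves rank. When $\text{rank}(\psi(t))$ is finite, the set of operators of rank at most $r$ is closed in the strong operator topology (linear independence of finitely many image vectors is an open condition under strong convergence), so the limit $\pi(t)$ has rank at most $\text{rank}(\psi(t))$; when $\text{rank}(\psi(t))$ is infinite, separability of $\H_\pi$ forces $\text{rank}(\pi(t))\le\aleph_0=\text{rank}(\psi(t))$. For (iii), note first that $\Gamma_\psi(\psi(t)-\pi(t))=\pi(t)-\pi(t)=0$, so the closed ideal $\ker\Gamma_\psi$ contains $\text{Im}(\psi-\pi)$ and hence $\langle\langle\text{Im}(\psi-\pi)\rangle\rangle$. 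For the reverse inclusion I would pass to the quotient $q\colon\rC^*(\text{Im}\psi)\to\rC^*(\text{Im}\psi)/\langle\langle\text{Im}(\psi-\pi)\rangle\rangle$; there $q(\psi(t))=q(\pi(t))$, so the generators of the quotient already lie in $q(\text{Im}\pi)$, whence $q|_{\text{Im}\pi}$ is onto. Since $\Gamma_\psi$ factors through $q$ and restricts to the identity on $\text{Im}\pi$, the surjection $q|_{\text{Im}\pi}$ acquires a left inverse and is therefore a $*$-isomorphism, which forces $\ker\Gamma_\psi=\ker q=\langle\langle\text{Im}(\psi-\pi)\rangle\rangle$.

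Statement (iv) is then a formal consequence of $\Gamma_\psi$ being idempotent: every $a$ splits as $\Gamma_\psi(a)+(a-\Gamma_\psi(a))\in\text{Im}\pi+\ker\Gamma_\psi$, and $a-\Gamma_\psi(a)$ ranges over exactly $\ker\Gamma_\psi$ as $a$ does, giving $\text{Im}(\text{id}-\Gamma_\psi)=\ker\Gamma_\psi$. For (v), if $\psi=\pi$ then $\rC^*(\text{Im}\psi)=\text{Im}\pi$ and $\Gamma_\psi=\text{id}$ is isometric; conversely an isometric $*$-homomorphism is injective, so $\ker\Gamma_\psi=0$, and (iii) gives $\langle\langle\text{Im}(\psi-\pi)\rangle\rangle=0$, i.e. $\psi=\pi$.

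I expect the crux to be part (i), and within it the passage from weak to strong convergence: only strong convergence lets one push the limit through products and recover multiplicativity of $\Gamma_\psi$, which is precisely what upgrades a mere completely positive limit to a $*$-homomorphic conditional expectation. A secondary subtlety is the minimality half of (iii), where one must check that the quotient by $\langle\langle\text{Im}(\psi-\pi)\rangle\rangle$ collapses $\text{Im}\psi$ onto $\text{Im}\pi$ isomorphically rather than merely surjectively; the relation $\Gamma_\psi|_{\text{Im}\pi}=\text{id}$ is what supplies the needed left inverse.
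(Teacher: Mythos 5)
Your proposal is correct and, for parts (i), (ii), (iv) and (v), follows essentially the same route as the paper: upgrade the net to strong convergence via \cite[Lemma 2.1]{kleski2014korovkin}, push the limit through words to get a multiplicative, norm-lower-semicontinuous limit $\Gamma_\psi=\lim_\beta\operatorname{Ad}u_\beta$ on the dense span of words in $\text{Im}\psi$, and check it fixes $\text{Im}\pi$.

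The one genuine divergence is the minimality half of (iii). The paper proves directly, by induction on the length of words $\psi(t_{i_1})\cdots\psi(t_{i_n})$, that every element of $\rC^*(\text{Im}\psi)$ decomposes as $J+\pi(t)$ with $J\in\fJ:=\langle\langle\text{Im}(\psi-\pi)\rangle\rangle$, and then applies $\Gamma_\psi$ to such a decomposition of an element of $\ker\Gamma_\psi$. You instead pass to the quotient $q:\rC^*(\text{Im}\psi)\to\rC^*(\text{Im}\psi)/\fJ$, observe that $q(\psi(t))=q(\pi(t))$ forces $q|_{\text{Im}\pi}$ to be onto (using that the image of a $\rC^*$-algebra under a $*$-homomorphism is a closed subalgebra), and use $\Gamma_\psi|_{\text{Im}\pi}=\mathrm{id}$ to produce a left inverse, so $q|_{\text{Im}\pi}$ is a $*$-isomorphism and $\ker\Gamma_\psi=\ker q=\fJ$. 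The two arguments encode the same structural fact ($\rC^*(\text{Im}\psi)=\fJ+\text{Im}\pi$, which is how the paper phrases part (iv)), but yours trades the explicit combinatorial induction for the automatic closedness of $*$-homomorphic images; it is a little slicker, while the paper's version makes the additive decomposition in (iv) completely explicit. Both are sound.
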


\begin{proof}
{\rm (i)}: As $\pi$ possesses the approximate unique extension property, there is a net of unitaries $u_\beta\in B(\H_\pi)$ such that $u_\beta^*\psi(t)u_\beta\rightarrow \pi(t)$ in the weak operator topology for each $t\in \rC^*(\M)$. By \cite[Lemma 2.1]{kleski2014korovkin}, we may conclude that convergence is in the strong operator topology. Consequently, given a $*$-polynomial $p$ in non-commuting variables and finitely many elements $t_1, \ldots, t_n\in \rC^*(\M)$, we have that \[\lim_\beta u_\beta^* p(\psi(t_1), \ldots, \psi(t_n)) u_\beta = p(\pi(t_1), \ldots, \pi(t_n))\]in the strong operator topology. Since the linear span of words in $\text{Im}\psi$ is dense within $\rC^*(\text{Im}\psi)$, we obtain a $*$-homomorphism \[\Gamma_\psi: \rC^*(\text{Im}\psi)\rightarrow\text{Im}\pi,  \ \ \ \ \ \ x\mapsto \lim_\beta u_\beta^*xu_\beta.\]Evidently, $\Gamma_\psi\circ\psi = \pi$. Note that \[\text{Im}\pi= \rC^*(\pi(\M)) = \rC^*(\psi(\M)) \subset\rC^*(\text{Im}\psi).\] Since \[(\Gamma_\psi\circ\pi)\mid_\M = (\Gamma_\psi\circ\psi)\mid_\M = \pi\mid_\M\]and $\Gamma_\psi$ is a $*$-representation, it follows that $\Gamma_\psi\circ\pi = \pi$. Therefore, $\Gamma_\psi$ is a conditional expectation as desired.

{\rm (ii)}: It is well-known that the rank function is lower semicontinous in the weak operator topology provided that $\H_\pi$ is separable. Then {\rm (ii)} follows instantly since there is a net of unitaries $u_\beta\in B(\H_\pi)$ such that $u_\beta^*\psi(t)u_\beta\rightarrow \pi(t)$ in the weak operator topology.

{\rm (iii)}: Since $\Gamma_\psi\circ\psi = \pi=\Gamma_\psi\circ\pi$, it follows that \[\text{Im}(\psi-\pi)\subset\ker\Gamma_\psi.\]Let $\fJ$ be the smallest closed two-sided ideal of $\rC^*(\text{Im}\psi)$ containing $\text{Im}(\psi-\pi)$. Necessarily, we have that $\fJ\subset\ker\Gamma_\psi$. We will show that each element $\zeta\in\rC^*(\text{Im}\psi)$ may expressed as $J+\pi(t)$ where $J\in\fJ$ and $t\in\rC^*(\M)$. Observe that \[\fD = \{ J+\pi(t) ~:~ J\in\fJ,\ t\in\rC^*(\M)\}\]is a closed subset of $\rC^*(\text{Im}\psi)$ as it is the pre-image of $\text{Im}\pi/\fJ$ under the quotient map $q:\rC^*(\text{Im}\psi)\rightarrow\rC^*(\text{Im}\psi)/\fJ$. Hence, by density, it suffices to consider when $\zeta\in\rC^*(\text{Im}\psi)$ lies within the linear span of words in $\text{Im}\psi$. In this case, since $\psi$ and $\rC^*(\M)$ are unital, we may obtain a finite sum of words\[ \zeta = \sum_i\psi(t_{i_1})\ldots\psi(t_{i_n})\]where $n$ is a fixed integer and $t_{k_\ell}\in\rC^*(\M)$ for each $k,\ell$. For a fixed $i$, we will show that $\psi(t_{i_1})\ldots\psi(t_{i_n})$ may be expressed as $J+\pi(t)$ by an easy inductive argument. For the base step, observe that \[\psi(t_{i_1}) = \psi(t_{i_1})-\pi(t_{i_1})+\pi(t_{i_1})\]and $(\psi(t_{i_1})-\pi(t_{i_1}))\in\fJ$. Now, for the inductive step, we may obtain $K\in\fJ$ and $s\in\rC^*(\M)$ such that $\psi(t_{i_1})\ldots\psi(t_{i_{n-1}}) = K+\pi(s)$. Then we see that \begin{align*}
\psi(t_{i_1})\ldots\psi(t_{i_n}) & = (K+\pi(s))\psi(t_{i_n})\\
& = (K+\pi(s))(\psi(t_{i_n})-\pi(t_{i_n})+\pi(t_{i_n}))\\
& = K(\psi(t_{i_n})-\pi(t_{i_n})+\pi(t_{i_n})) + \pi(s)(\psi(t_{i_n})-\pi(t_{i_n})) + \pi(st_{i_n}).
\end{align*}Therefore, we have that the claim also holds for $\zeta$.

Now, let $k\in\ker\Gamma_\psi$ and express $k = J+\pi(t)$ where $J\in\fJ$ and $t\in\rC^*(\M)$. Then we obtain that \[ k-J = \pi(t)= \Gamma_\psi(\pi(t)) = \Gamma_\psi(k-J) = 0.\]So $k = J$ and we have that $\fJ = \ker\Gamma_\psi$.

{\rm (iv):} This follows from a standard fact on linear idempotents, but we record it for later arguments. Since $\Gamma_\psi$ is a linear idempotent, it is easy to see that $\text{Im}(id-\Gamma_\psi) = \ker\Gamma_\psi$. Then, \[\rC^*(\text{Im}\psi) = \text{Im}(id-\Gamma_\psi) + \text{Im}\Gamma_\psi = \ker\Gamma_\psi + \text{Im}\pi.\] 

{\rm (v):} This is an immediate consequence of {\rm (iii)}.
\end{proof}

A statement similar to (i) was already known for faithful $*$-representations of nuclear $\rC^*$-algebras satisfying the property that all factor representations possess the unique extension property \cite[Theorem 4.2]{kleski2014korovkin}. The construction therein required non-trivial machinery on direct integrals and disintegration of measures. In contrast, our methods in Theorem \ref{T:AUEPsequence} provide a non-trivial implication for all unital operator spaces that fail to be hyperrigid. Indeed, whenever $\M\subset\rC^*_e(\M)$ fails to be hyperrigid, there is a $*$-representation of $\rC^*_e(\M)$ that has the approximate unique extension property but not the usual unique extension property by Proposition \ref{P:hr_aue} and Theorem \ref{T:UEPimplyAUEP}.

As a consequence to Theorem \ref{T:AUEPsequence}, we may derive a statement of independent interest. As far as we are aware, the following has not been recorded elsewhere.

\begin{corollary}\label{C:AUEPisom}
Let $\M\subset B(\H)$ be a unital operator space. Suppose that $\pi:\rC^*_e(\M)\rightarrow B(\H_\pi)$ is an isometric $*$-representation and let $\psi:\rC^*_e(\M)\rightarrow B(\H_\pi)$ be a unital completely positive map satisfying $\psi\mid_\M = \pi\mid_\M$. Then, the following statements hold.\begin{enumerate}[\rm (i)]
\item The map $\psi$ is completely isometric.
\item We have that $\rC^*_e(\text{Im}\psi)\cong \rC^*_e(\M)$.
\end{enumerate}
\end{corollary}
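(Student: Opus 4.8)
The plan is to deduce both statements from Theorem \ref{T:AUEPsequence}, applied not to $\pi$ itself---which need not possess the approximate unique extension property---but to a sufficiently large amplification. Since $\pi$ is isometric, Proposition \ref{P:AUEPker}(ii) supplies a cardinal $\mu$ for which $\pi^{(\mu)}$ has the approximate unique extension property. Because $\psi\mid_\M=\pi\mid_\M$, the amplified map $\psi^{(\mu)}$ is a unital completely positive extension of $\pi^{(\mu)}\mid_\M$, so Theorem \ref{T:AUEPsequence}(i) furnishes a homomorphic conditional expectation $\Gamma:\rC^*(\text{Im}\,\psi^{(\mu)})\to\text{Im}\,\pi^{(\mu)}$ with $\Gamma\circ\psi^{(\mu)}=\pi^{(\mu)}$. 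The entire argument rests on exploiting this factorization together with the fact that $\pi^{(\mu)}$, being an isometric $*$-representation, is completely isometric.

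For (i) I would argue as follows. As a $*$-homomorphism, $\Gamma$ is completely contractive, while $\psi^{(\mu)}$ is unital completely positive and hence also completely contractive. Amplifying $\Gamma\circ\psi^{(\mu)}=\pi^{(\mu)}$ to matrices and using that $\pi^{(\mu)}$ is completely isometric, I obtain, for every $T\in M_n(\rC^*_e(\M))$, the chain $\|T\|=\|(\pi^{(\mu)}\otimes\id_n)(T)\|\le\|(\psi^{(\mu)}\otimes\id_n)(T)\|\le\|T\|$, so that $\psi^{(\mu)}$ is completely isometric. Finally, the norm of $(\psi^{(\mu)}\otimes\id_n)(T)$ coincides with that of $(\psi\otimes\id_n)(T)$, since the amplified image is unitarily equivalent to a direct sum of identical copies of $(\psi\otimes\id_n)(T)$; thus complete isometry descends from $\psi^{(\mu)}$ to $\psi$, giving (i).

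For (ii), part (i) shows that $\psi:\rC^*_e(\M)\to\text{Im}\,\psi$ is a unital surjective complete isometry, and as a unital complete isometry it is a complete order isomorphism onto the closed, self-adjoint, unital image $\text{Im}\,\psi$. The $\rC^*$-envelope is invariant under such maps \cite[pg. 219]{paulsen2002completely}, so $\rC^*_e(\text{Im}\,\psi)\cong\rC^*_e(\rC^*_e(\M))$, where on the right I regard $\rC^*_e(\M)$ as a unital operator space. Since a unital completely positive map agreeing with a $*$-representation on all of a $\rC^*$-algebra necessarily equals it, every $*$-representation of the $\rC^*$-algebra $\rC^*_e(\M)$ has the unique extension property with respect to itself; equivalently, the $\rC^*$-envelope of a unital $\rC^*$-algebra is the algebra itself, whence $\rC^*_e(\rC^*_e(\M))=\rC^*_e(\M)$ and (ii) follows.

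The hard part will be the reduction in the first paragraph: because $\pi$ may fail to have the approximate unique extension property, Theorem \ref{T:AUEPsequence} cannot be invoked directly, and one must pass to $\pi^{(\mu)}$ and then verify that complete isometry---an inherently matricial, amplification-sensitive property---is unaffected by the inflation $(\mu)$ when descending back to $\psi$. Once the factorization $\Gamma\circ\psi^{(\mu)}=\pi^{(\mu)}$ is secured, the remaining ingredients are the routine observation that a contraction composed after an isometry forces the inner map to be isometric, and the standard functoriality of the $\rC^*$-envelope.
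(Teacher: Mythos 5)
Your proposal follows the paper's own argument essentially verbatim: amplify to $\pi^{(\mu)}$ so that Proposition \ref{P:AUEPker} and Theorem \ref{T:AUEPsequence} yield the homomorphic conditional expectation $\Gamma$ with $\Gamma\circ\psi^{(\mu)}=\pi^{(\mu)}$, deduce complete isometry of $\psi^{(\mu)}$ from the contractivity sandwich, descend to $\psi$, and then identify $\rC^*_e(\text{Im}\psi)$ via the induced complete order isomorphism onto the $\rC^*$-algebra $\text{Im}\pi\cong\rC^*_e(\M)$. All steps are correct; you have merely spelled out the matricial norm chain and the fact that a unital $\rC^*$-algebra is its own $\rC^*$-envelope, which the paper leaves implicit.
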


\begin{proof}
{\rm (i)}: By Proposition \ref{P:AUEPker}, there is a cardinal $\kappa$ such that $\Pi := \pi^{(\kappa)}$ has the approximate unique extension property. Note that the map $\Psi := \psi^{(\kappa)}$ agrees with $\Pi$ over $\M$. Thus, by Theorem \ref{T:AUEPsequence}, there is a homomorphic conditional expectation $\Gamma_\Psi: \rC^*(\text{Im}\Psi)\rightarrow\text{Im}\Pi$ satisfying $\Gamma_{\Psi}\circ\Psi = \Pi$. Since $\Pi$ is isometric, it immediately follows that $\Psi$ is completely isometric. Finally, this implies that $\psi$ itself must be completely isometric.

{\rm (ii)}: By statement {\rm (i)}, there is a complete order isomorphism $\iota: \text{Im}\psi\rightarrow \text{Im}\pi$ such that $\iota\circ\psi = \pi$. Since $\text{Im}\pi$ is a $\rC^*$-algebra, it then follows that $\text{Im}\pi\cong \rC^*_e(\M)$ is the $\rC^*$-envelope for $\text{Im}\psi$.
\end{proof}

The argument in Corollary \ref{C:AUEPisom} may be repurposed. If $\pi$ has the approximate unique extension property and $\psi$ is a completely positive extension such that $\ker\pi\subset\ker\psi$, then in fact we have equality of kernels and hence, there is a complete order isomorphism $\iota:\text{Im}\psi\rightarrow\text{Im}\pi$ given by $\Gamma_\psi\mid_{\text{Im}\psi}$. In the language of \cite[Theorem 10.5]{kennedy2021noncommutative}, we may conclude that the nc state space of the operator system $\text{Im}\psi$ is an ``nc Bauer simplex."

We briefly remark that the existence of the conditional expectation $\Gamma_\psi$ provides a tool for determining when a completely positive extension $\psi$ of $\pi\mid_\M$ will coincide with the $*$-representation $\pi$.

\begin{corollary}\label{C:OrthComp}
Let $\M\subset B(\H)$ be a unital operator space. Suppose that $\pi: \rC^*(\M)\rightarrow B(\H_\pi)$ is a $*$-representation possessing the approximate unique extension property and let $\psi: \rC^*(\M)\rightarrow B(\H_\pi)$ be a unital completely positive map satisfying $\psi\mid_\M = \pi\mid_\M$. Then, the following statements are equivalent.\begin{enumerate}[\rm (i)]
\item The equality $\psi = \pi$ holds.
\item The linear map $(id-\Gamma_\psi)$ is contractive.
\end{enumerate}
\end{corollary}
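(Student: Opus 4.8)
The plan is to deduce both implications directly from the structural properties of the homomorphic conditional expectation $\Gamma_\psi$ recorded in Theorem \ref{T:AUEPsequence}. The central point is that $\Gamma_\psi$ is a $*$-homomorphism of $\rC^*(\text{Im}\psi)$ onto $\text{Im}\pi$ whose kernel $\ker\Gamma_\psi$ is a closed two-sided ideal, and that, by part {\rm (iv)}, the complementary idempotent $id-\Gamma_\psi$ is precisely the linear projection onto $\ker\Gamma_\psi$ along $\text{Im}\pi$.

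For the implication {\rm (i)}$\Rightarrow${\rm (ii)}, I would observe that if $\psi=\pi$ then $\Gamma_\psi$ is isometric by Theorem \ref{T:AUEPsequence} {\rm (v)}; being an isometric $*$-homomorphism, it has trivial kernel, so $\text{Im}(id-\Gamma_\psi)=\ker\Gamma_\psi=\{0\}$ by part {\rm (iv)}. Hence $id-\Gamma_\psi=0$, which is trivially contractive.

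For the converse {\rm (ii)}$\Rightarrow${\rm (i)}, I would argue by contraposition. Suppose $\psi\neq\pi$; then $\Gamma_\psi$ is not isometric by part {\rm (v)}, and since $\Gamma_\psi$ is a $*$-homomorphism this forces $J:=\ker\Gamma_\psi\neq\{0\}$. Choosing a nonzero $a\in J$ and normalizing $j:=a^*a/\lVert a^*a\rVert$, the ideal property gives $j\in J$, and $j$ is a positive contraction with $\lVert j\rVert=1$, so $0\leq j\leq 1$. The decisive step is to test $id-\Gamma_\psi$ on the self-adjoint ``reflection'' $x=1-2j$, where $1=\pi(1)\in\text{Im}\pi$ is the unit: since $\Gamma_\psi(1)=1$ and $\Gamma_\psi(j)=0$, we have $(id-\Gamma_\psi)(x)=-2j$, of norm $2$, whereas the spectrum of $x$ lies in $[-1,1]$ so that $\lVert x\rVert\leq 1$. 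Thus $\lVert(id-\Gamma_\psi)(x)\rVert=2>1\geq\lVert x\rVert$, and $id-\Gamma_\psi$ fails to be contractive.

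I expect the only non-routine ingredient to be the choice of this test element. Feeding a positive element of the ideal straight into $id-\Gamma_\psi$ merely reproduces it and yields no contradiction; the needed insight is that an idempotent and its complement can both be contractive only when the projection behaves orthogonally, and one detects a nonzero kernel by exhibiting a norm-one self-adjoint element that $id-\Gamma_\psi$ sends to an element of norm $2$. The factor of $2$ in $1-2j$ is exactly what converts the existence of a nonzero positive element of $\ker\Gamma_\psi$ into a violation of contractivity.
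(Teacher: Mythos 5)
Your proof is correct, but the direction {\rm (ii)}$\Rightarrow${\rm (i)} follows a genuinely different route from the paper's. The paper argues directly (not by contraposition): it notes that $id-\Gamma_\psi$ is a contractive linear idempotent onto the closed ideal $\ker\Gamma_\psi$, invokes Tomiyama's Theorem to upgrade contractivity to the conditional-expectation bimodule property, and then computes $xk=(id-\Gamma_\psi)(xk)=(id-\Gamma_\psi)(x)k=xk-\Gamma_\psi(x)k$ for $k\in\ker\Gamma_\psi$, forcing $\Gamma_\psi(x)k=0$ for all $x$ and hence $k=0$, so that $\Gamma_\psi$ is injective and Theorem \ref{T:AUEPsequence} {\rm (v)} applies. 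You instead bypass Tomiyama entirely and exhibit an explicit witness: a norm-one positive element $j$ of the nonzero ideal $\ker\Gamma_\psi$ yields the self-adjoint contraction $1-2j$ sent by $id-\Gamma_\psi$ to $-2j$ of norm $2$. All the steps check out ($\ker\Gamma_\psi$ is a $*$-ideal so $a^*a\in\ker\Gamma_\psi$; $\Gamma_\psi$ is unital so $(id-\Gamma_\psi)(1-2j)=-2j$; and $\operatorname{spec}(1-2j)\subset[-1,1]$ gives $\lVert 1-2j\rVert\le 1$). Your argument is more elementary and also quantitative, showing that $\lVert id-\Gamma_\psi\rVert\ge 2$ whenever $\psi\neq\pi$ rather than merely exceeding $1$; the paper's argument is shorter given that Tomiyama's Theorem is already in play elsewhere in the text, and it isolates the structural reason contractivity fails, namely that a contractive projection onto an ideal must be multiplicative against that ideal.
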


\begin{proof}
It is clear from Theorem \ref{T:AUEPsequence} that $\Gamma_\pi$ is the identity representation and so, {\rm (i)}$\Rightarrow$ {\rm (ii)} is immediate.

{\rm (ii)}$\Rightarrow${\rm (i)}: The map $(id-\Gamma_\psi)$ is a linear projection onto $\ker\Gamma_\psi$ by Theorem \ref{T:AUEPsequence} {\rm (iv)}. By the assumption, combined with Tomiyama's Theorem \cite[Theorem 1.5.10]{brown2008textrm}, we conclude that $(id-\Gamma_\psi)$ is a conditional expectation. Thus, given $k\in\ker\Gamma_\psi$ and $x\in\rC^*(\text{Im}\psi)$, we have that \[ xk = (id-\Gamma_\psi)(xk) = (id-\Gamma_\psi)(x)k = xk-\Gamma_\psi(x)k\]and so $\Gamma_\psi(x)k = 0$. As $x\in\rC^*(\text{Im}\psi)$ is arbitrary, we must have that $k=0$. So $\Gamma_\psi$ is injective and thus, $\psi = \pi$ by Theorem \ref{T:AUEPsequence} {\rm (v)}.
\end{proof}

For our next development, we will require some terminology. Let $\M\subset \N$ be unital operator spaces and let $\rU\rC\rP(\M,\N)$ denote the collection of unital completely positive maps from $\M$ to $\N$. There is a natural ordering among the idempotent elements of $\rU\rC\rP(\M,\N)$ defined by $\varphi\prec\varphi'$ if and only if $\varphi\circ\varphi' = \varphi'\circ\varphi = \varphi$. Given a subspace $\F$ of $\M$, an idempotent $\varphi$ is said to be a \emph{minimal $\F$-projection} in $\rU\rC\rP(\M,\N)$ if $\varphi$ is minimal among the idempotent elements of $\rU\rC\rP(\M,\N)$ that fix $\F$. Minimal $\F$-projections have found previous success in helping to prove existence of the injective envelope of an operator system \cite[Chapter 15]{paulsen2002completely}. We will find that the map $\Gamma_\psi$ enjoys this minimality condition and is uniquely determined by its behaviour over $\text{Im}\psi$.

\begin{theorem}\label{T:GammaUEP}
Let $\M\subset B(\H)$ be a unital operator space. Suppose that $\pi: \rC^*(\M)\rightarrow B(\H_\pi)$ is a $*$-representation possessing the approximate unique extension property and let $\psi: \rC^*(\M)\rightarrow B(\H_\pi)$ be a unital completely positive map satisfying $\psi\mid_\M = \pi\mid_\M$. Then, the following statements hold.\begin{enumerate}[\rm (i)]
\item Every $*$-representation $\sigma:\rC^*(\text{Im}\psi)\rightarrow B(\H_\sigma)$ such that $\sigma\mid_{\ker\Gamma_\psi}=0$ has the unique extension property with respect to $\text{Im}\psi$.
\item The map $\Gamma_\psi$ is a minimal $\text{Im}\pi$-projection in $\rU\rC\rP(\rC^*(\text{Im}\psi), B(\H_\pi))$.
\end{enumerate}
\end{theorem}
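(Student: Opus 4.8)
The plan is to prove (i) first and then deduce (ii) from it as an essentially formal consequence. Throughout I would use the homomorphic conditional expectation $\Gamma_\psi:\rC^*(\text{Im}\psi)\rightarrow\text{Im}\pi$ furnished by Theorem \ref{T:AUEPsequence}, together with its defining relations $\Gamma_\psi\circ\psi=\pi$ and the fact that $\Gamma_\psi$ is a $*$-homomorphism. The heart of (i) is to show that any unital completely positive extension $\rho$ of $\sigma\mid_{\text{Im}\psi}$ has $\text{Im}\psi$ inside its multiplicative domain, which forces $\rho$ to be a $*$-homomorphism and hence to agree with $\sigma$ on the generated $\rC^*$-algebra.

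For (i), fix a $*$-representation $\sigma:\rC^*(\text{Im}\psi)\rightarrow B(\H_\sigma)$ with $\sigma\mid_{\ker\Gamma_\psi}=0$ and a unital completely positive $\rho$ with $\rho\mid_{\text{Im}\psi}=\sigma\mid_{\text{Im}\psi}$. First I would record that $\psi(t)^*\psi(t)-\psi(t^*t)\in\ker\Gamma_\psi$ for every $t\in\rC^*(\M)$; this is immediate since $\Gamma_\psi$ is a $*$-homomorphism with $\Gamma_\psi(\psi(t))=\pi(t)$, so both terms are sent to $\pi(t^*t)$. The central computation is then a two-sided squeeze on $\rho(\psi(t)^*\psi(t))$. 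On one side, the Schwarz inequality for the completely positive map $\psi$ gives $\psi(t)^*\psi(t)\leq\psi(t^*t)$, so applying the positive map $\rho$ and using $\rho(\psi(t^*t))=\sigma(\psi(t^*t))$ yields $\rho(\psi(t)^*\psi(t))\leq\sigma(\psi(t^*t))$. On the other side, since $\sigma$ annihilates $\ker\Gamma_\psi$ and is multiplicative, $\sigma(\psi(t^*t))=\sigma(\psi(t)^*\psi(t))=\sigma(\psi(t))^*\sigma(\psi(t))=\rho(\psi(t))^*\rho(\psi(t))$, while the Schwarz inequality for $\rho$ gives $\rho(\psi(t)^*\psi(t))\geq\rho(\psi(t))^*\rho(\psi(t))$. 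Combining the two bounds forces $\rho(\psi(t)^*\psi(t))=\rho(\psi(t))^*\rho(\psi(t))$, and the same argument applied to $t^*$ supplies the adjoint identity $\rho(\psi(t)\psi(t)^*)=\rho(\psi(t))\rho(\psi(t))^*$. By Choi's characterization of the multiplicative domain \cite{paulsen2002completely}, every $\psi(t)$ lies in the multiplicative domain of $\rho$; as $\text{Im}\psi$ generates $\rC^*(\text{Im}\psi)$ and the multiplicative domain is a $\rC^*$-subalgebra, $\rho$ is a $*$-homomorphism. Since $\rho$ and $\sigma$ are then $*$-homomorphisms agreeing on the generating set $\text{Im}\psi$, we conclude $\rho=\sigma$, which is precisely the unique extension property for $\sigma$ with respect to $\text{Im}\psi$.

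For (ii), I would first observe that $\Gamma_\psi$ is an idempotent unital completely positive map fixing $\text{Im}\pi$, so it is an $\text{Im}\pi$-projection in $\rU\rC\rP(\rC^*(\text{Im}\psi),B(\H_\pi))$. To establish minimality, let $\varphi$ be any idempotent in $\rU\rC\rP(\rC^*(\text{Im}\psi),B(\H_\pi))$ that fixes $\text{Im}\pi$ and satisfies $\varphi\prec\Gamma_\psi$, so in particular $\varphi\circ\Gamma_\psi=\varphi$. Evaluating this identity at $\psi(t)$ and using $\Gamma_\psi(\psi(t))=\pi(t)$ together with the fact that $\varphi$ fixes $\text{Im}\pi$, I obtain $\varphi(\psi(t))=\varphi(\pi(t))=\pi(t)=\Gamma_\psi(\psi(t))$, hence $\varphi\mid_{\text{Im}\psi}=\Gamma_\psi\mid_{\text{Im}\psi}$. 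Now $\Gamma_\psi$ is itself a $*$-representation of $\rC^*(\text{Im}\psi)$ that annihilates $\ker\Gamma_\psi$, so by part (i) it has the unique extension property with respect to $\text{Im}\psi$; therefore the completely positive map $\varphi$, being an extension of $\Gamma_\psi\mid_{\text{Im}\psi}$, must equal $\Gamma_\psi$. This gives the desired minimality.

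I expect the main obstacle to be the two-sided squeeze in part (i): one must choose the right companion element $\psi(t^*t)$ and verify that the Schwarz inequality for $\psi$ and the Schwarz inequality for $\rho$ pinch $\rho(\psi(t)^*\psi(t))$ to the common value $\sigma(\psi(t^*t))$, with the hypothesis $\sigma\mid_{\ker\Gamma_\psi}=0$ being exactly what identifies the two ends of the squeeze. Once this equality is in hand, the passage to the multiplicative domain and the reduction of (ii) to the unique extension property of $\Gamma_\psi$ are routine.
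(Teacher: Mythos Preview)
Your proof is correct and follows essentially the same route as the paper. In part (i) the paper factors $\sigma=\widetilde{\sigma}\circ\Gamma_\psi$ to run the Schwarz squeeze, whereas you phrase the same identification via $\psi(t)^*\psi(t)-\psi(t^*t)\in\ker\Gamma_\psi$; the resulting chain of inequalities and the multiplicative-domain conclusion are identical, and part (ii) is derived from (i) in the same way in both arguments.
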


\begin{proof}
{\rm (i)}: Let $\varphi:\rC^*(\text{Im}\psi)\rightarrow B(\H_\sigma)$ be a unital completely positive map satisfying $\varphi\mid_{\text{Im}\psi} =\sigma\mid_{\text{Im}\psi}.$ As $\sigma$ vanishes over $\ker\Gamma_\psi$, we may express $\sigma = \widetilde{\sigma}\circ\Gamma_\psi$ where $\widetilde{\sigma}:\text{Im}\pi\rightarrow B(\H_\sigma)$ is a $*$-representation. Observe that, by Theorem \ref{T:AUEPsequence}, \[ (\varphi\circ\psi)(t) = (\sigma\circ\psi)(t) = (\widetilde{\sigma}\circ\Gamma_\psi\circ\psi)(t) = (\widetilde{\sigma}\circ\pi)(t), \ \ \ \ \ t\in\rC^*(\M).\]By the Schwarz inequality, we have that for each $t\in\rC^*(\M)$, \[\varphi(\psi(t)^*\psi(t))\leq (\varphi\circ\psi)(t^*t) = (\widetilde{\sigma}\circ\pi)(t^*t) = \varphi(\psi(t))^*\varphi(\psi(t)) \leq \varphi(\psi(t)^*\psi(t))\]and so $\varphi(\psi(t)^*\psi(t)) = \varphi(\psi(t))^*\varphi(\psi(t))$. Consequently, we have that $\text{Im}\psi$ is a subset of the multiplicative domain for $\varphi$ \cite[Theorem 3.18]{paulsen2002completely}. Therefore, since $\varphi\mid_{\text{Im}\psi} = \sigma\mid_{\text{Im}\psi}$, we must have that $\varphi=\sigma$.

{\rm (ii)}: Let $\Omega:\rC^*(\text{Im}\psi)\rightarrow B(\H_\pi)$ be a unital completely positive idempotent such that $\Omega\circ\pi = \pi$ and $\Omega\prec\Gamma_\psi$. In this case, by Theorem \ref{T:AUEPsequence}, \[ \Omega\circ\psi = \Omega\circ\Gamma_\psi\circ\psi = \Omega\circ\pi = \pi.\]As $\Omega\mid_{\text{Im}\psi}=\Gamma_\psi\mid_{\text{Im}\psi}$, it follows by {\rm (i)} that $\Omega = \Gamma_\psi$.
\end{proof}

As a result of Theorem \ref{T:GammaUEP}, we show next that there is a one-to-one correspondence between the idempotents $\Gamma_\psi$ and the domain of such maps. In particular, a completely positive extension $\psi$ will be equal to a $*$-representation $\pi$ with the approximate unique extension property provided that the image of the completely positive extension is sufficiently small. This generalizes \cite[Corollary 3.3]{clouatre2018unperforated}.

\begin{corollary}\label{C:Correspondence}
Let $\M\subset B(\H)$ be a unital operator space. Suppose that $\pi:\rC^*(\M)\rightarrow B(\H_\pi)$ is a $*$-representation possessing the approximate unique extension property and let $\psi,\psi':\rC^*(\M)\rightarrow B(\H_\pi)$ be unital completely positive maps satisfying $\psi\mid_\M = \psi'\mid_\M = \pi\mid_\M$. Then, the following statements are equivalent.\begin{enumerate}[\rm (i)]
\item We have that $\ker\Gamma_\psi \subset\ker\Gamma_{\psi'}$.
\item We have that $\rC^*(\text{Im}\psi) \subset \rC^*(\text{Im}\psi')$.
\item We have that $\Gamma_{\psi'}\mid_{\rC^*(\text{Im}\psi)} = \Gamma_\psi$.
\end{enumerate}In particular, we have that $\Gamma_\psi = \Gamma_{\psi'}$ if and only if $\rC^*(\text{Im}\psi) = \rC^*(\text{Im}\psi').$ Further, $\psi=\pi$ whenever $\text{Im}\psi\subset\text{Im}\pi$.
\end{corollary}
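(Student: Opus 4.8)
The plan is to establish the three-way equivalence cyclically, in the order {\rm (i)}$\Rightarrow${\rm (ii)}$\Rightarrow${\rm (iii)}$\Rightarrow${\rm (i)}, and then to read off the two concluding assertions. Throughout I would use that $\text{Im}\pi = \rC^*(\pi(\M)) = \rC^*(\psi(\M))$ sits inside both $\rC^*(\text{Im}\psi)$ and $\rC^*(\text{Im}\psi')$, together with two structural facts from Theorem \ref{T:AUEPsequence}: the decomposition $\rC^*(\text{Im}\psi) = \ker\Gamma_\psi + \text{Im}\pi$ from {\rm (iv)}, and the isometry criterion from {\rm (v)}. Recall also that $\Gamma_\psi$ and $\Gamma_{\psi'}$ are homomorphic conditional expectations, hence $*$-homomorphisms.

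For {\rm (i)}$\Rightarrow${\rm (ii)}, I would simply combine the decomposition $\rC^*(\text{Im}\psi) = \ker\Gamma_\psi + \text{Im}\pi$ with the hypothesis $\ker\Gamma_\psi\subset\ker\Gamma_{\psi'}$: since both $\ker\Gamma_{\psi'}$ and $\text{Im}\pi$ lie inside $\rC^*(\text{Im}\psi')$, so does their sum, which contains $\rC^*(\text{Im}\psi)$.

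The implication {\rm (ii)}$\Rightarrow${\rm (iii)} is the heart of the matter, and the main obstacle is to show that $\Gamma_{\psi'}$ collapses $\psi$ back to $\pi$, that is, $\Gamma_{\psi'}(\psi(t)) = \pi(t)$ for every $t$. My strategy here is self-referential. Assuming {\rm (ii)}, every element $\psi(t)$ lies in $\text{Im}\psi\subset\rC^*(\text{Im}\psi)\subset\rC^*(\text{Im}\psi')$, so the composite $\Lambda := \Gamma_{\psi'}\circ\psi$ is a well-defined unital completely positive map on $\rC^*(\M)$ with $\Lambda\mid_\M = \pi\mid_\M$ (because $\Gamma_{\psi'}$ fixes $\text{Im}\pi$ and $\psi\mid_\M = \pi\mid_\M$) and with $\text{Im}\Lambda\subset\text{Im}\pi$. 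Applying Theorem \ref{T:AUEPsequence} {\rm (i)} to the extension $\Lambda$ itself produces a homomorphic conditional expectation $\Gamma_\Lambda: \rC^*(\text{Im}\Lambda)\rightarrow\text{Im}\pi$ with $\Gamma_\Lambda\circ\Lambda = \pi$. The crucial observation is that $\rC^*(\text{Im}\Lambda) = \text{Im}\pi$: indeed $\text{Im}\Lambda\subset\text{Im}\pi$ forces $\rC^*(\text{Im}\Lambda)\subset\text{Im}\pi$, while $\text{Im}\pi = \rC^*(\Lambda(\M))\subset\rC^*(\text{Im}\Lambda)$. Thus $\Gamma_\Lambda$ is a conditional expectation of $\text{Im}\pi$ onto itself, hence fixes its whole domain, so $\Gamma_\Lambda = \id$ and therefore $\Lambda = \pi$, which is precisely the identity $\Gamma_{\psi'}(\psi(t)) = \pi(t)$. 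With this in hand, $\Gamma_{\psi'}\mid_{\rC^*(\text{Im}\psi)}$ and $\Gamma_\psi$ are both $*$-homomorphisms sending each generator $\psi(t)$ to $\pi(t)$; since $\text{Im}\psi$ generates $\rC^*(\text{Im}\psi)$, they coincide, which is {\rm (iii)}.

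For {\rm (iii)}$\Rightarrow${\rm (i)}, I would note that {\rm (iii)} presupposes $\rC^*(\text{Im}\psi)\subset\rC^*(\text{Im}\psi')$ and then compute $\ker\Gamma_\psi = \ker\big(\Gamma_{\psi'}\mid_{\rC^*(\text{Im}\psi)}\big) = \ker\Gamma_{\psi'}\cap\rC^*(\text{Im}\psi)\subset\ker\Gamma_{\psi'}$. Finally, the two concluding statements follow from the equivalence: if $\rC^*(\text{Im}\psi) = \rC^*(\text{Im}\psi')$ then {\rm (ii)} holds in both directions, so {\rm (iii)} yields $\Gamma_{\psi'} = \Gamma_\psi$ on the common domain, while the reverse is immediate since equal maps have equal domains. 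For the last assertion, if $\text{Im}\psi\subset\text{Im}\pi$ I would take $\psi' = \pi$, so that $\rC^*(\text{Im}\psi)\subset\rC^*(\text{Im}\pi) = \text{Im}\pi = \rC^*(\text{Im}\psi')$ gives {\rm (ii)}; since $\Gamma_\pi = \id$, {\rm (iii)} forces $\Gamma_\psi = \id\mid_{\rC^*(\text{Im}\psi)}$, which is isometric, whence $\psi = \pi$ by Theorem \ref{T:AUEPsequence} {\rm (v)}.
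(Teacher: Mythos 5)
Your proof is correct and follows essentially the same route as the paper: {\rm (i)}$\Rightarrow${\rm (ii)} via the decomposition $\rC^*(\text{Im}\psi)=\ker\Gamma_\psi+\text{Im}\pi$ from Theorem \ref{T:AUEPsequence} {\rm (iv)}, and {\rm (ii)}$\Rightarrow${\rm (iii)} by applying Theorem \ref{T:AUEPsequence} {\rm (i)} to the composite $\Gamma_{\psi'}\circ\psi$ to deduce $\Gamma_{\psi'}\circ\psi=\pi$, with the remaining implications and the two concluding assertions handled exactly as in the paper. The only (harmless) deviation is the last step of {\rm (ii)}$\Rightarrow${\rm (iii)}: where the paper invokes the unique extension property of $\Gamma_\psi$ with respect to $\text{Im}\psi$ from Theorem \ref{T:GammaUEP} {\rm (i)}, you instead observe that $\Gamma_{\psi'}\mid_{\rC^*(\text{Im}\psi)}$ and $\Gamma_\psi$ are $*$-homomorphisms agreeing on the generating set $\text{Im}\psi$, which is a slightly more elementary way to finish.
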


\begin{proof}
The implication {\rm (iii)}$\Rightarrow${\rm (i)} is trivial and {\rm (i)}$\Rightarrow${\rm (ii)} follows directly from Theorem \ref{T:AUEPsequence} {\rm (iv)} because\[ \rC^*(\text{Im}\psi) = \ker\Gamma_\psi + \text{Im}\pi \subset \ker\Gamma_{\psi'}+\text{Im}\pi = \rC^*(\text{Im}\psi').\]

{\rm (ii)}$\Rightarrow${\rm (iii)}: In this case, we obtain a well-defined unital completely positive map $\varphi := \Gamma_{\psi'}\circ\psi: \rC^*(\M)\rightarrow\text{Im}\pi$ satisfying \[\varphi\mid_\M = (\Gamma_{\psi'}\circ\psi)\mid_\M =(\Gamma_{\psi'}\circ\pi)\mid_\M = \pi\mid_\M.\]Thus, by Theorem \ref{T:AUEPsequence} {\rm (i)}, \[ \pi(t) = (\Gamma_\varphi\circ\varphi)(t) = \varphi(t), \ \ \ \ \ \ \ t\in\rC^*(\M),\]on account of $\varphi(t)$ lying in the image of the idempotent $\Gamma_\varphi: \rC^*(\text{Im}\varphi)\rightarrow\text{Im}\pi.$ It follows that $\Gamma_{\psi'}\circ\psi = \pi$. However, by Theorem \ref{T:GammaUEP} {\rm (i)}, the map $\Gamma_\psi$ has the unique extension property with respect to $\text{Im}\psi$. So, since $\Gamma_{\psi'}\mid_{\text{Im}\psi} = \Gamma_\psi\mid_{\text{Im}\psi}$, it follows that $\Gamma_{\psi'}\mid_{\rC^*(\text{Im}\psi)} = \Gamma_\psi$.

For the final statement note that, whenever $\text{Im}\psi\subset\text{Im}\pi$, it follows from condition {\rm (iii)} that \[\Gamma_\psi = \Gamma_\pi\mid_{\rC^*(\text{Im}\psi)} = id,\]where $id$ denotes the identity representation of $\rC^*(\text{Im}\psi)$. By Theorem \ref{T:AUEPsequence} {\rm (v)}, we conclude that $\psi = \pi$.
\end{proof}

Recall that, by Proposition \ref{P:AUEPHR}, to determine the class of $*$-representations that possess the approximate unique extension property it suffices to determine the class of \emph{irreducible} $*$-representations that possess the approximate unique extension property (provided that $\M$ is separable). Any other $*$-representation is the result of a direct sum or an approximate unitary equivalence of $*$-representations of this form. Here, we will mention how the conditional expectation $\Gamma_\psi$ behaves with respect to direct sums and approximate unitary equivalences of completely positive extensions. We start with direct sums.

\begin{corollary}\label{C:GammaSum}
Let $\M\subset B(\H)$ be a unital operator space and $\{\pi_i:\rC^*(\M)\rightarrow B(\H_i) : i\in I\}$ be a collection of $*$-representations that possess the approximate unique extension property. For each $i\in I$, let $\psi_i:\rC^*(\M)\rightarrow B(\H_i)$ be a unital completely positive map satisfying $\psi_i\mid_\M = \pi_i\mid_\M$. Then, defining $\pi = \bigoplus_{i\in I}\pi_i$ and $\psi = \bigoplus_{i\in I}\psi_i$, we have that the map $\Gamma_\psi:\rC^*(\text{Im}\psi)\rightarrow\text{Im}\pi$ satisfies\[\Gamma_\psi((x_i)_{i\in I}) = (\Gamma_{\psi_i}(x_i))_{i\in I}, \ \ \ \ \ \ (x_i)_{i\in I}\in\rC^*(\text{Im}\psi).\]
\end{corollary}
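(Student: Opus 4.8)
The plan is to verify that the block-diagonal formula defines a map with the correct universal characterization and then invoke the uniqueness already established. First I would check that the candidate map $\Phi((x_i)_{i\in I}) := (\Gamma_{\psi_i}(x_i))_{i\in I}$ is actually well-defined as a map $\rC^*(\text{Im}\psi)\to\text{Im}\pi$. The subtle point here is that $\rC^*(\text{Im}\psi)$ need not split as a direct sum of the $\rC^*(\text{Im}\psi_i)$; however, since $\psi = \bigoplus_i \psi_i$, each word in $\text{Im}\psi$ is block-diagonal with $i$-th entry a word in $\text{Im}\psi_i$, and by density the coordinate compressions $x\mapsto P_{\H_i}xP_{\H_i}\!\mid_{\H_i}$ carry $\rC^*(\text{Im}\psi)$ into $\rC^*(\text{Im}\psi_i)$. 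Thus $\Phi$ is the restriction to $\rC^*(\text{Im}\psi)$ of the block-diagonal $*$-homomorphism $\bigoplus_i \Gamma_{\psi_i}$, and in particular it is a unital completely positive map.

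Next I would confirm that $\Phi$ satisfies the two defining properties that Theorem \ref{T:GammaUEP} {\rm (ii)} says characterize $\Gamma_\psi$ uniquely: namely that $\Phi$ is a unital completely positive idempotent fixing $\text{Im}\pi$ with $\Phi\circ\pi = \pi$, and that $\Phi$ is minimal among such maps. The relations $\Phi\circ\psi = \pi$ and $\Phi\circ\pi = \pi$ follow coordinatewise from the corresponding identities $\Gamma_{\psi_i}\circ\psi_i = \pi_i$ and $\Gamma_{\psi_i}\circ\pi_i = \pi_i$ of Theorem \ref{T:AUEPsequence} {\rm (i)}, applied in each summand. Idempotency and the fact that $\Phi$ fixes $\text{Im}\pi = \bigoplus_i\text{Im}\pi_i$ are likewise immediate from the block structure, since $\Gamma_{\psi_i}$ fixes $\text{Im}\pi_i$.

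The cleanest route to the conclusion, however, is not to re-prove minimality directly but to leverage the uniqueness supplied by Theorem \ref{T:GammaUEP} {\rm (i)}. By Proposition \ref{P:AUEPSum}, $\pi$ has the approximate unique extension property, so $\Gamma_\psi$ exists and, by Theorem \ref{T:GammaUEP} {\rm (i)}, has the unique extension property with respect to $\text{Im}\psi$. I would compute that $\Phi\mid_{\text{Im}\psi} = \Gamma_\psi\mid_{\text{Im}\psi}$: for $\psi(t) = (\psi_i(t))_{i\in I}$ we have $\Phi(\psi(t)) = (\Gamma_{\psi_i}(\psi_i(t)))_{i\in I} = (\pi_i(t))_{i\in I} = \pi(t) = \Gamma_\psi(\psi(t))$. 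Since $\Phi$ is a unital completely positive map agreeing with the $*$-homomorphism $\Gamma_\psi$ on the generating set $\text{Im}\psi$, and $\Gamma_\psi$ has the unique extension property with respect to $\text{Im}\psi$, it follows that $\Phi = \Gamma_\psi$ on all of $\rC^*(\text{Im}\psi)$, which is precisely the claimed formula.

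The main obstacle I anticipate is the first step: ensuring that the coordinate compressions genuinely land in $\rC^*(\text{Im}\psi_i)$ and that $\Phi$ is well-defined, since the hypotheses give nothing about how the summands interact, and a priori $\rC^*(\text{Im}\psi)$ may be strictly larger than a direct sum. This is resolved by working with the dense $*$-algebra of words in $\text{Im}\psi$, where block-diagonality is manifest, and extending by continuity; once that is in hand the remaining verifications are routine applications of the results already established.
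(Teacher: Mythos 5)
Your proposal is correct and follows essentially the same route as the paper: define the block-diagonal candidate map, observe (using Proposition \ref{P:AUEPSum}) that $\pi$ has the approximate unique extension property so that $\Gamma_\psi$ exists, check that the candidate is unital completely positive with $\Phi\circ\psi=\pi$, and conclude $\Phi=\Gamma_\psi$ from the unique extension property of $\Gamma_\psi$ with respect to $\text{Im}\psi$ supplied by Theorem \ref{T:GammaUEP} {\rm (i)}. Your extra care in verifying that the coordinate compressions land in $\rC^*(\text{Im}\psi_i)$, so that the candidate map is well-defined, is a point the paper leaves implicit but is handled correctly here.
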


\begin{proof}
Note that $\psi$ is a unital completely positive map satisfying $\psi\mid_\M = \pi\mid_\M$ and recall that $\pi$ has the approximate unique extension property by Proposition \ref{P:AUEPSum}. Thus, if we define \[\Lambda:\rC^*(\text{Im}\psi)\rightarrow \text{Im}\pi, \ \ \ \ \ \ (x_i)_{i\in I}\mapsto(\Gamma_{\psi_i}(x_i))_{i\in I},\]then observe that $\Lambda$ is a completely positive map satisfying $\Lambda\circ\psi = \pi.$ So we have that $\Lambda = \Gamma_\psi$ by Theorem \ref{T:GammaUEP} {\rm (i)}.
\end{proof}

Next, we address how the conditional expectation $\Gamma_\psi$ is affected by approximately unitarily equivalent completely positive extensions. We say that unital completely positive maps $\psi,\psi':\rC^*(\M)\rightarrow B(\H_\pi)$ are \emph{approximately unitarily equivalent} whenever there is a net of unitary operators $w_\gamma\in B(\H_\pi)$ satisfying \[ \lim_\gamma \| w_\gamma^*\psi'(t)w_\gamma - \psi(t)\| = 0, \ \ \ \ \ \ \ t\in\rC^*(\M).\]In particular, when $\psi$ and $\psi'$ are approximately unitarily equivalent, there is a $*$-isomorphism $\Omega:\rC^*(\text{Im}\psi)\rightarrow\rC^*(\text{Im}\psi')$ satisfying $\Omega\circ\psi = \psi'$.

\begin{corollary}\label{C:UniqueNorm}
Let $\M\subset B(\H)$ be a unital operator space. Suppose that $\pi:\rC^*(\M)\rightarrow B(\H_\pi)$ is a $*$-representation possessing the approximate unique extension property and let $\psi,\psi': \rC^*(\M)\rightarrow B(\H_\pi)$ be unital completely positive maps satisfying $\psi\mid_\M = \psi'\mid_\M = \pi\mid_\M$. Assume that there is a $*$-isomorphism $\Omega:\rC^*(\text{Im}\psi)\rightarrow\rC^*(\text{Im}\psi')$ satisfying $\Omega\circ\psi = \psi'.$ Then, we have that $\rC^*(\text{Im}\psi) = \rC^*(\text{Im}\psi')$ and $\Gamma_\psi = \Gamma_{\psi'}$.
\end{corollary}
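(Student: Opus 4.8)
The plan is to deduce everything from the single \emph{concrete} equality $\rC^*(\text{Im}\psi)=\rC^*(\text{Im}\psi')$ of $\rC^*$-subalgebras of $B(\H_\pi)$: once this is known, the final clause of Corollary \ref{C:Correspondence} immediately yields $\Gamma_\psi=\Gamma_{\psi'}$. To begin, Theorem \ref{T:AUEPsequence} (i) supplies the two homomorphic conditional expectations $\Gamma_\psi:\rC^*(\text{Im}\psi)\rightarrow\text{Im}\pi$ and $\Gamma_{\psi'}:\rC^*(\text{Im}\psi')\rightarrow\text{Im}\pi$ with $\Gamma_\psi\circ\psi=\pi$ and $\Gamma_{\psi'}\circ\psi'=\pi$. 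The first observation is that $\Omega$ restricts to the identity on $\text{Im}\pi$: for $m\in\M$ we have $\Omega(\pi(m))=\Omega(\psi(m))=\psi'(m)=\pi(m)$, and since $\Omega$ is a $*$-homomorphism and $\text{Im}\pi=\rC^*(\pi(\M))$, it fixes all of $\text{Im}\pi$.

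Next I would establish the intertwining identity $\Gamma_{\psi'}\circ\Omega=\Gamma_\psi$. The composite $\Gamma_{\psi'}\circ\Omega$ is a unital completely positive map of $\rC^*(\text{Im}\psi)$ into $\text{Im}\pi$ which agrees with $\Gamma_\psi$ on $\text{Im}\psi$, because $(\Gamma_{\psi'}\circ\Omega)(\psi(t))=\Gamma_{\psi'}(\psi'(t))=\pi(t)=\Gamma_\psi(\psi(t))$. As $\Gamma_\psi$ has the unique extension property with respect to $\text{Im}\psi$ by Theorem \ref{T:GammaUEP} (i), the two maps coincide; this is the same uniqueness step used in the proof of Corollary \ref{C:GammaSum}. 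Since $\Omega$ is injective, it follows that $\Omega(\ker\Gamma_\psi)=\ker\Gamma_{\psi'}$.

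The heart of the argument is the concrete equality $\rC^*(\text{Im}\psi)=\rC^*(\text{Im}\psi')$. I would aim to obtain both inclusions from Corollary \ref{C:Correspondence}: using the decomposition $\rC^*(\text{Im}\psi)=\ker\Gamma_\psi+\text{Im}\pi$ of Theorem \ref{T:AUEPsequence} (iv) together with $\text{Im}\pi\subseteq\rC^*(\text{Im}\psi')$, the inclusion $\rC^*(\text{Im}\psi)\subseteq\rC^*(\text{Im}\psi')$ is equivalent to the containment of operators $\ker\Gamma_\psi\subseteq\ker\Gamma_{\psi'}$, and the reverse inclusion then follows by running the same argument with $(\psi,\psi',\Omega)$ replaced by $(\psi',\psi,\Omega^{-1})$. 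I expect this to be the main obstacle, and a genuine one: the relation $\Omega(\ker\Gamma_\psi)=\ker\Gamma_{\psi'}$ from the previous paragraph matches the two ideals only \emph{through} $\Omega$, whereas one needs them to agree as sets of operators in $B(\H_\pi)$, and $\Omega$ is certainly not the identity map once $\psi\neq\psi'$ (it moves $\text{Im}\psi$ onto $\text{Im}\psi'$). To close the gap I would exploit the concrete origin of $\Omega$: when $\psi$ and $\psi'$ are approximately unitarily equivalent through unitaries $w_\gamma$, one has $\lVert w_\gamma^*\psi'(t)w_\gamma-\psi(t)\rVert\rightarrow0$ for \emph{every} $t\in\rC^*(\M)$, so that $\Omega(x)=\lim_\gamma w_\gamma xw_\gamma^*$ in norm on $\rC^*(\text{Im}\psi)$. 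The task is then to leverage this norm convergence, together with the approximate unique extension property of $\pi$ (which is what rigidly constrains the extensions $\psi,\psi'$, and without which the conditional expectations would not even exist), to place each generator $\psi'(t)$ inside $\rC^*(\text{Im}\psi)$.

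Once $\rC^*(\text{Im}\psi)=\rC^*(\text{Im}\psi')$ is established, the equality $\Gamma_\psi=\Gamma_{\psi'}$ is immediate from the final clause of Corollary \ref{C:Correspondence}. All of the content is thus concentrated in the third paragraph: the first two steps are purely formal consequences of the uniqueness furnished by Theorem \ref{T:GammaUEP} (i), while identifying the two generated $\rC^*$-algebras as the \emph{same} subalgebra of $B(\H_\pi)$ — rather than as abstractly isomorphic algebras — is where the approximate-unitary-equivalence data must be used in an essential way.
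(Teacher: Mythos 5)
Your first two steps coincide exactly with the paper's own proof: the paper likewise observes that $\pi=\Gamma_{\psi'}\circ\psi'=\Gamma_{\psi'}\circ\Omega\circ\psi$ and invokes Theorem \ref{T:GammaUEP} (i) to conclude $\Gamma_{\psi'}\circ\Omega=\Gamma_\psi$. At the point where you stop, the paper disposes of the matter in one line --- ``$\ker\Gamma_{\psi'}=\ker\Gamma_\psi$ on account of $\Omega$ being a $*$-isomorphism'' --- and then applies Corollary \ref{C:Correspondence}. As a proof, your proposal is therefore incomplete: your third paragraph is a plan rather than an argument, and you never actually place the generators $\psi'(t)$ inside $\rC^*(\text{Im}\psi)$.

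That said, your diagnosis of the obstacle is correct, and the gap does not appear to be closable: the intertwining relation yields only $\Omega(\ker\Gamma_\psi)=\ker\Gamma_{\psi'}$, which identifies the two kernels \emph{through} $\Omega$ but not as subsets of $B(\H_\pi)$. Indeed, take $\pi_0:\rC^*(\M)\rightarrow B(\H_0)$ with the approximate unique extension property but not the unique extension property (e.g.\ the non-boundary irreducible representation of $\O_\infty$ in Example \ref{E:CuntzInf}), choose a unital completely positive extension $\psi_0\neq\pi_0$ of $\pi_0\mid_\M$, and set $\pi=\pi_0\oplus\pi_0$, $\psi=\psi_0\oplus\pi_0$, $\psi'=\pi_0\oplus\psi_0$, with $\Omega=\mathrm{Ad}_{u^*}$ for the flip unitary $u$ on $\H_0\oplus\H_0$. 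All hypotheses hold ($\pi$ has the approximate unique extension property by Proposition \ref{P:AUEPSum}, and $\Omega\circ\psi=\psi'$), yet $\rC^*(\text{Im}\psi)\subset\rC^*(\text{Im}\psi_0)\oplus\text{Im}\pi_0$ while $\rC^*(\text{Im}\psi')\subset\text{Im}\pi_0\oplus\rC^*(\text{Im}\psi_0)$; since $\psi_0\neq\pi_0$ forces $\psi_0(t_0)\notin\text{Im}\pi_0$ for some $t_0$ (final clause of Corollary \ref{C:Correspondence}), the element $\psi(t_0)$ lies in the first algebra but not the second. Note that this configuration is an honest \emph{unitary} equivalence of $\psi$ and $\psi'$, so your proposed repair via the norm limits $w_\gamma xw_\gamma^*$ cannot succeed either. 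In short, you have faithfully reproduced the sound portion of the paper's argument, and the step you flag as ``a genuine obstacle'' is, as far as I can see, a genuine error in the paper's proof and statement; what survives is the relation $\Gamma_{\psi'}\circ\Omega=\Gamma_\psi$, i.e.\ equality of the two conditional expectations only up to the identification furnished by $\Omega$.
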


\begin{proof}
Observe that, by Theorem \ref{T:AUEPsequence} {\rm (i)}, we have that\[\pi = \Gamma_{\psi'}\circ\psi' = \Gamma_{\psi'}\circ\Omega\circ\psi.\]By Theorem \ref{T:GammaUEP} {\rm (i)}, it follows that $\Gamma_{\psi'}\circ\Omega = \Gamma_\psi$. Whence, we obtain that $\ker\Gamma_{\psi'}=\ker\Gamma_\psi$ on account of $\Omega$ being a $*$-isomorphism. By Corollary \ref{C:Correspondence}, we conclude that $\rC^*(\text{Im}\psi) = \rC^*(\text{Im}\psi')$ and $\Gamma_\psi = \Gamma_{\psi'}$. \end{proof}

We close this section by providing an example of when the approximate unique extension property is equivalent to the usual unique extension property. This allows us to improve Theorem \ref{T:AUESummary} in a special case.

\begin{theorem}\label{T:Compact}
Let $\M\subset B(\H)$ be a unital operator space and let $\pi: \rC^*(\M)\rightarrow B(\H_\pi)$ be a $*$-representation such that $\fK(\H)\subset \text{Im}\pi$. Then, the following statements are equivalent.\begin{enumerate}[\rm (i)]
\item $\pi$ possesses the unique extension property.
\item $\pi$ is approximately unitarily equivalent to a boundary representation.
\item $\pi$ possesses the approximate unique extension property.
\end{enumerate}
In particular, if $\M$ is separable and $\rC^*(\M)$ is postliminal, then every irreducible $*$-representation of $\rC^*(\M)$ is a boundary representation if and only if every $*$-representation of $\rC^*(\M)$ possesses the approximate unique extension property.
\end{theorem}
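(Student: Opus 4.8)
The plan is to establish the cycle {\rm (i)}$\Rightarrow${\rm (ii)}$\Rightarrow${\rm (iii)}$\Rightarrow${\rm (i)}, and then deduce the concluding ``in particular'' clause by combining the main equivalence with Theorem \ref{T:AUESummary}. The structural observation I would record at the outset is that the hypothesis $\fK(\H_\pi)\subset\text{Im}\pi$ forces $\pi$ to be irreducible: since the compact operators act irreducibly, $(\text{Im}\pi)'\subset\fK(\H_\pi)'=\bC I$. This is precisely what permits passing freely between ``boundary representation'' and ``irreducible together with the unique extension property.''

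With irreducibility in hand, the two easy implications are immediate. For {\rm (i)}$\Rightarrow${\rm (ii)}: a $*$-representation that is both irreducible and possesses the unique extension property is by definition a boundary representation, and $\pi$ is trivially approximately unitarily equivalent to itself. For {\rm (ii)}$\Rightarrow${\rm (iii)}: a boundary representation has the unique extension property, so any $*$-representation approximately unitarily equivalent to one inherits the approximate unique extension property by Theorem \ref{T:UEPimplyAUEP}.

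The main obstacle is {\rm (iii)}$\Rightarrow${\rm (i)}. Here I would take an arbitrary unital completely positive map $\psi$ extending $\pi\mid_\M$ and invoke Theorem \ref{T:AUEPsequence} {\rm (i)} to produce the homomorphic conditional expectation $\Gamma_\psi:\rC^*(\text{Im}\psi)\rightarrow\text{Im}\pi$ with $\Gamma_\psi\circ\psi=\pi$ and $\Gamma_\psi\mid_{\text{Im}\pi}=\text{id}$. By Theorem \ref{T:AUEPsequence} {\rm (v)} it suffices to show that $\Gamma_\psi$ is isometric, i.e. that the ideal $\fJ:=\ker\Gamma_\psi$ of $\rC^*(\text{Im}\psi)$ is zero. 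The key point is that $\fK(\H_\pi)\subset\text{Im}\pi\subset\rC^*(\text{Im}\psi)$ and that $\Gamma_\psi$ fixes $\fK(\H_\pi)$ pointwise (being the identity on $\text{Im}\pi$), so $\fJ\cap\fK(\H_\pi)=0$. Since $\fK(\H_\pi)$ is a two-sided ideal of $\rC^*(\text{Im}\psi)$, the product satisfies $\fJ\cdot\fK(\H_\pi)\subset\fJ\cap\fK(\H_\pi)=0$; hence every $a\in\fJ$ annihilates all rank-one operators and is therefore $0$. Thus $\fJ=0$, so $\Gamma_\psi$ is injective and $\psi=\pi$, which is the unique extension property. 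The subtlety to get right is that $\fK(\H_\pi)$ really is an \emph{essential} ideal of $\rC^*(\text{Im}\psi)$, so that triviality of the intersection $\fJ\cap\fK(\H_\pi)$ forces triviality of $\fJ$; this is exactly the annihilation argument above.

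Finally, for the concluding equivalence under the postliminal hypothesis, I would reason as follows. If every irreducible $*$-representation of $\rC^*(\M)$ is a boundary representation, then Theorem \ref{T:AUESummary} (the implication {\rm (ii)}$\Rightarrow${\rm (iv)}) already yields that every $*$-representation has the approximate unique extension property. Conversely, if every $*$-representation has the approximate unique extension property, then in particular each irreducible $\pi$ does; postliminality guarantees $\fK(\H_\pi)\subset\text{Im}\pi$, so the main equivalence {\rm (iii)}$\Rightarrow${\rm (i)} upgrades this to the unique extension property, whence $\pi$, being irreducible, is a boundary representation.
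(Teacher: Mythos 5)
Your proof is correct and follows essentially the same route as the paper: the first two implications are handled identically, and for {\rm (iii)}$\Rightarrow${\rm (i)} you apply Theorem \ref{T:AUEPsequence} to get $\Gamma_\psi$ and then argue that a homomorphic conditional expectation fixing $\fK(\H_\pi)$ must be injective, which is exactly the step the paper disposes of by citing \cite[Proposition 3.5]{paulsen2011weak}. Your explicit essential-ideal argument for that step, and your observation that $\fK(\H_\pi)\subset\text{Im}\pi$ forces irreducibility (which the paper leaves implicit in calling {\rm (i)}$\Rightarrow${\rm (ii)} trivial), are both sound.
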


\begin{proof}
The direction {\rm (i)} $\Rightarrow${\rm (ii)} is trivial and {\rm (ii)}$\Rightarrow${\rm (iii)} is Theorem \ref{T:UEPimplyAUEP}. For {\rm (iii)}$\Rightarrow${\rm (i)}, let $\psi:\rC^*(\M)\rightarrow B(\H_\pi)$ be a completely positive extension of $\pi\mid_\M$. Then, observe that $\Gamma_\psi$ is a $*$-representation that fixes $\fK(\H)$ and consequently, must be the identity representation (for example, see \cite[Proposition 3.5]{paulsen2011weak}). In which case, by Theorem \ref{T:AUEPsequence} {\rm (v)}, we have that $\psi = \pi$ and so $\pi$ has the unique extension property. 

For the last statement, if every irreducible $*$-representation of $\rC^*(\M)$ is a boundary representation, then every $*$-representation of $\rC^*(\M)$ possesses the approximate unique extension property by Theorem \ref{T:AUESummary}. For the converse, note that every irreducible $*$-representation $\sigma:\rC^*(\M)\rightarrow B(\H_\sigma)$ possesses the approximate unique extension property. Since $\rC^*(\M)$ is postliminal, we have that $\fK(\H_\sigma)\subset \text{Im}\sigma$. Therefore, the previous paragraph guarantees that $\sigma$ has the unique extension property.\end{proof}

We remark that the last statement within Theorem \ref{T:Compact} allows us to rephrase Arveson's hyperrigidity conjecture in a particular case. Let $\M\subset B(\H)$ be a separable unital operator space such that $\rC^*_e(\M)$ is postliminal and suppose that there is a $*$-representation of $\rC^*_e(\M)$ that does not possess the unique extension property. Then, for Arveson's conjecture to be true, there must be a $*$-representation of $\rC^*_e(\M)$ that does not possess the approximate unique extension property.

Finally, note that Theorem \ref{T:Compact} allows us to readily identify many instances where $*$-representations with the approximate unique extension property will necessarily factor through the $\rC^*$-envelope.

\begin{example}\label{E:DA}
Let $H^2$ denote the classical Hardy space on the open unit disc $\bD\subset\bC$ and let $\bT\subset\bC$ denote the unit circle. We let $\rA(\bD)$ denote the disc algebra, which is the closed subalgebra of holomorphic functions in $\rC(\ol{\bD})$.

Consider the subalgebra $\A\subset B(H^2)$ of multiplication operators $M_\varphi$ where $\varphi\in\rA(\bD)$. The reader may consult \cite{agler2002pick},\cite{douglas2012banach},\cite{hoffman2007banach} for more details on this algebra. For our purposes, we note that there is a well-known split exact sequence of $\rC^*$-algebras \[\begin{tikzcd}
0 \arrow{r} & \fK(H^2) \arrow{r} & \rC^*(\A) \arrow{r} & \rC(\bT) \arrow{r} & 0\end{tikzcd}\]and that $\rC^*(\A)$ is the Toeplitz algebra. Moreover, we have that $\rC^*_e(\A)\cong\rC(\bT)$ (for example, this follows by \cite[Corollary 6.4]{clouatre2018multiplier}).

Note that the image of the identity representation $\pi:\rC^*(\A)\rightarrow B(H^2)$ contains $\fK(H^2)$. Since $\pi$ does not factor through the $\rC^*$-envelope of $\A$, we have that $\pi$ cannot be a boundary representation for $\A$ by Theorem \ref{T:HR} {\rm (ii)}. So $\pi$ does not possess the approximate unique extension property either by Theorem \ref{T:Compact}.
\end{example}

\section{Concrete applications}\label{S:Examples}

In this section, we apply our work to a couple classes of operator spaces. This allows us to reinterpret the classical \v{S}a\v{s}kin Theorem (Proposition \ref{P:Saskin}) and obtain new formulations of Arveson's essential normality conjecture (Theorem \ref{T:HREssNorm}).

\subsection{Function spaces}\label{SS:Function}

Here, we study the approximate unique extension property in the presence of commutativity. For this, we recall the classical theorem of \v{S}a\v{s}kin \cite{vsavskin1967mil}. Given a compact metric space $X$ and a unital function space $\M\subset\rC(X)$, \v{S}a\v{s}kin proved that every irreducible $*$-representation of $\rC(X)$ is a boundary representation for $\M$ if and only if, whenever $\psi_n:\rC(X)\rightarrow\rC(X)$ is a sequence of unital completely positive maps satisfying $\| \psi_n(m)-m\|\rightarrow0$ for every $m\in\M$, then we have that $\| \psi_n(t)-t\|\rightarrow0$ for every $t\in\rC(X)$. A non-separable extension also holds upon replacing sequences with nets \cite[Theorem 5.3]{davidson2021choquet}.

\begin{proposition}\label{P:Saskin}
Let $X$ be a compact Hausdorff space and $\M\subset\rC(X)$ be a unital function space. Assume that $\rC(X)$ is the $\rC^*$-envelope of $\M$. Then, the following statements are equivalent.\begin{enumerate}[\rm (i)]
\item Whenever $\pi:\rC(X)\rightarrow B(\H_\pi)$ is a $*$-representation possessing the approximate unique extension property and $\psi:\rC(X)\rightarrow B(\H_\pi)$ is a unital completely positive map such that $\rC^*(\text{Im}\psi)$ is commutative and $\psi\mid_\M=\pi\mid_\M$, we have that $\psi = \pi$.
\item Whenever $\psi_\alpha:\rC(X)\rightarrow\rC(X)$ is a net of unital completely positive maps satisfying $\| \psi_\alpha(g)-g\|\rightarrow0$ for every $g\in\M$, then we have that $\| \psi_\alpha(f)-f\|\rightarrow0$ for every $f\in\rC(X)$.
\item Every irreducible $*$-representation for $\rC(X)$ is a boundary representation. 
\end{enumerate}
\end{proposition}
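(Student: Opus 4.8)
The plan is to treat the equivalence (ii)$\Leftrightarrow$(iii) as the net form of the classical \v{S}a\v{s}kin Theorem, cited from \cite{vsavskin1967mil} and \cite[Theorem 5.3]{davidson2021choquet}, and to splice statement (i) into the cycle by proving (iii)$\Rightarrow$(i) and (i)$\Rightarrow$(iii). I would deliberately avoid a direct proof of (i)$\Rightarrow$(ii): the approximate unique extension property only yields convergence in the weak (hence strong) operator topology after conjugation, whereas (ii) demands uniform convergence in the norm of $\rC(X)$, so there is no clean passage. The commutativity hypothesis on $\rC^*(\text{Im}\psi)$ in (i) is precisely what lets one pass to the Gelfand picture and invoke the Choquet-boundary dichotomy.

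For (iii)$\Rightarrow$(i), suppose $\pi$ has the approximate unique extension property and $\psi$ is a unital completely positive extension of $\pi\mid_\M$ with $\rC^*(\text{Im}\psi)$ commutative. Theorem \ref{T:AUEPsequence} furnishes a homomorphic conditional expectation $\Gamma_\psi:\rC^*(\text{Im}\psi)\rightarrow\text{Im}\pi$ with $\Gamma_\psi\circ\psi=\pi$, and by part (v) it suffices to show $\Gamma_\psi$ is injective. Since $\text{Im}\pi\subset\rC^*(\text{Im}\psi)$, both are commutative; write $\rC^*(\text{Im}\psi)=\rC(Y)$ and $\text{Im}\pi=\rC(Z)$ by Gelfand theory. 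Arguing by contradiction, if $\ker\Gamma_\psi\neq\{0\}$ I would choose $f_*\in\ker\Gamma_\psi$ and $y_0\in Y$ with $f_*(y_0)\neq0$. The evaluation $\delta_{y_0}$ restricts to a character $\delta_{z_0}$ on $\text{Im}\pi$, and the state $\phi:=\delta_{y_0}\circ\psi$ agrees on $\M$ with the character $\chi_{x_0}:=\delta_{z_0}\circ\pi$ of $\rC(X)$, because $\psi\mid_\M=\pi\mid_\M$ takes values in $\text{Im}\pi$. Hypothesis (iii) says $\chi_{x_0}$ is a boundary representation, hence has the unique extension property, forcing $\phi=\chi_{x_0}$; unwinding this gives $\delta_{y_0}(\psi(t))=\delta_{y_0}(\pi(t))$ for every $t$, so $\delta_{y_0}$ annihilates $\text{Im}(\psi-\pi)$. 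As $\ker\Gamma_\psi$ is the smallest closed ideal containing $\text{Im}(\psi-\pi)$ (Theorem \ref{T:AUEPsequence}(iii)) while $\ker\delta_{y_0}$ is such an ideal, we get $\ker\Gamma_\psi\subset\ker\delta_{y_0}$, contradicting $f_*(y_0)\neq0$. Hence $\Gamma_\psi$ is isometric and $\psi=\pi$.

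For (i)$\Rightarrow$(iii) I would argue the contrapositive, building an explicit counterexample to (i) from a single non-boundary character. If (iii) fails, some character $\chi_{x_0}$ of $\rC(X)$ fails the unique extension property, so there is a probability measure $\nu\neq\delta_{x_0}$ on $X$ with $\int_X g\,d\nu=g(x_0)$ for every $g\in\M$. Let $\rho$ be a faithful isometric $*$-representation of $\rC(X)=\rC^*_e(\M)$ with the unique extension property (Theorem \ref{T:HR}(i)), fix an infinite cardinal $\kappa\geq\dim\H_\rho$, and set $\pi:=\rho^{(\kappa)}\oplus\chi_{x_0}$. Since $\ker\pi=\{0\}=\ker\rho^{(\kappa)}$ and the ranks agree on every element (both equal to $\kappa$ off the common kernel $\{0\}$), Theorem \ref{T:HadwinAUE} makes $\pi$ approximately unitarily equivalent to $\rho^{(\kappa)}$; as $\rho^{(\kappa)}$ has the unique extension property (Lemma \ref{L:UEP}) and therefore the approximate unique extension property (Theorem \ref{T:UEPimplyAUEP}), Proposition \ref{P:AUEP_aue} shows $\pi$ has the approximate unique extension property. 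Now set $\psi:=\rho^{(\kappa)}\oplus\bigl(\,\int_X\,\cdot\,d\nu\,\bigr)$. This map is unital completely positive, agrees with $\pi$ on $\M$, and has commutative $\rC^*(\text{Im}\psi)$ (its range is a commuting family of normal operators), yet $\psi\neq\pi$ because $\int\,\cdot\,d\nu\neq\chi_{x_0}$. This violates (i).

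The main obstacle is the construction in (i)$\Rightarrow$(iii): one must produce a $\pi$ that simultaneously carries the approximate unique extension property and admits a genuine commutative-range extension, and the device that makes this possible is the Hadwin-type absorption of the offending character $\chi_{x_0}$ into the faithful amplification $\rho^{(\kappa)}$, which preserves the approximate unique extension property while leaving $\chi_{x_0}$ as a direct summand that can be deformed to $\int\,\cdot\,d\nu$. The other delicate point lies inside (iii)$\Rightarrow$(i): the commutativity of $\rC^*(\text{Im}\psi)$ must be exploited to enter the Gelfand picture, so that the ideal $\ker\Gamma_\psi$ from Theorem \ref{T:AUEPsequence} can be localized at a single point $y_0$ and confronted with the unique extension property of the represented character $\chi_{x_0}$.
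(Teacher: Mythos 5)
Your proof is correct, but it routes through the equivalences differently from the paper. For (iii)$\Rightarrow$(i) the paper simply applies the Gelfand transform to $\rC^*(\text{Im}\psi)$ and cites \cite[Theorem 5.1]{davidson2021choquet} to conclude $\Omega\circ\psi=\Omega\circ\pi$ (notably, without ever using the approximate unique extension property of $\pi$); you instead give a self-contained argument that localizes at each character $\delta_{y_0}$ of $\rC(Y)=\rC^*(\text{Im}\psi)$, pulls it back to a character $\chi_{x_0}$ of $\rC(X)$, and invokes the unique extension property of $\chi_{x_0}$ granted by (iii). This works, though the detour through $\Gamma_\psi$ and Theorem \ref{T:AUEPsequence}(iii) is unnecessary: once you know $\delta_{y_0}\circ\psi=\delta_{y_0}\circ\pi$ for every $y_0\in Y$, the point evaluations separate $\rC(Y)$ and you get $\psi=\pi$ outright, with no contradiction argument and no use of the AUEP hypothesis. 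The larger divergence is in closing the cycle: the paper proves (i)$\Rightarrow$(ii) directly via an asymptotic algebra $\bigl(\prod_\alpha\text{Im}\pi\bigr)/\fJ$, amplifying to gain the approximate unique extension property and encoding the net $(\psi_\alpha)$ as a single commutative-range extension; you prove the contrapositive of (i)$\Rightarrow$(iii) by absorbing a non-boundary character $\chi_{x_0}$ into a faithful amplification $\rho^{(\kappa)}$ via Hadwin's rank criterion (Theorem \ref{T:HadwinAUE}) and then deforming the $\chi_{x_0}$ summand to a representing measure $\nu\neq\delta_{x_0}$. Your counterexample construction is arguably more elementary and makes explicit exactly how (i) fails when (iii) does, whereas the paper's route exhibits the direct mechanism by which (i) implies the Korovkin-type statement (ii); both yield complete proofs given the cited equivalence (ii)$\Leftrightarrow$(iii).
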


\begin{proof}
{\rm (ii)}$\Leftrightarrow${\rm (iii)}: This is the non-separable version of \v{S}a\v{s}kin's Theorem \cite[Theorem 5.3]{davidson2021choquet}.

{\rm (iii)}$\Rightarrow${\rm (i)}: Let $\pi:\rC(X)\rightarrow B(\H_\pi)$ be a $*$-representation possessing the approximate unique extension property and let $\psi:\rC(X)\rightarrow B(\H_\pi)$ be a unital completely positive map such that $\rC^*(\text{Im}\psi)$ is commutative and $\psi\mid_\M=\pi\mid_\M$. If $Y$ is a compact Hausdorff space and $\Omega:\rC^*(\text{Im}\psi)\rightarrow\rC(Y)$ is $*$-isomorphism, then $\Omega\circ\psi:\rC(X)\rightarrow \rC(Y)$ is a unital completely positive map satisfying $(\Omega\circ\psi)\mid_\M = (\Omega\circ\pi)\mid_\M.$ Thus, $\Omega\circ\psi=\Omega\circ\pi$ by \cite[Theorem 5.1]{davidson2021choquet} and so, $\psi = \pi$ as desired.

{\rm (i)}$\Rightarrow${\rm (ii)}: Let $\pi:\rC(X)\rightarrow B(\H_\pi)$ be an isometric $*$-representation and let $\psi_\alpha:\rC(X)\rightarrow\rC(X)$ be a net of unital completely positive maps satisfying $\| \psi_\alpha(g)-g\|\rightarrow0$ for every $g\in\M$. Define a $*$-homomorphism\[\pi_\U : \rC(X)\rightarrow\left( \prod_\alpha \text{Im}\pi\right)/\fJ, \ \ \ \ \ \ \ f\mapsto(\pi(f))_\alpha +\fJ,\]where $\fJ$ is the closed two-sided ideal of $\prod_\alpha \text{Im}\pi$ consisting of those nets $(\pi(f_\alpha))_\alpha$ such that $\|\pi(f_\alpha)\| = \| f_\alpha\|\rightarrow0$. By definition of $\fJ$, we have that $\ker\pi_\U = \ker\pi$ and so $\pi_\U$ is isometric. Upon representing $(\prod_\alpha \text{Im}\pi)/\fJ$ on some Hilbert space via an isometric $*$-homomorphism $\iota$, we may apply Proposition \ref{P:AUEPker} {\rm (ii)} to obtain a cardinal $\kappa$ such that $(\iota\circ\pi_\U)^{(\kappa)}$ has the approximate unique extension property.

Now, define a unital completely positive map \[ \psi:\rC(X)\rightarrow\left( \prod_\alpha \text{Im}\pi\right)/\fJ, \ \ \ \ \ \ \ f\mapsto ((\pi\circ\psi_\alpha)(f))_\alpha+\fJ.\]By assumption, we have that $\psi$ and $\pi_\U$ agree on $\M$. Therefore, we have that $(\iota\circ\psi)^{(\kappa)}$ and $(\iota\circ\pi_\U)^{(\kappa)}$ agree on $\M$ as well. However, since $\rC^*(\text{Im}(\iota\circ\psi)^{(\kappa)})$ is a commutative $\rC^*$-algebra, the assumption allows us to conclude that $(\iota\circ\psi)^{(\kappa)} = (\iota\circ\pi_\U)^{(\kappa)}$. As $\iota$ and $\pi_\U$ are isometric, and by definition of $\pi_\U$, it then follows that $\| \psi_\alpha(f)- f\|\rightarrow 0$ for every $f\in\rC(X).$
\end{proof}

We remark that Proposition \ref{P:Saskin} is very similar to \cite[Theorem 5.1]{davidson2021choquet}, but in condition {\rm (i)} we may make the additional assumption that $\pi$ has the approximate unique extension property. Due to Theorem \ref{T:Compact}, we see that (a priori) the only non-trivial implication in condition {\rm (i)} is in regards to those $*$-representations that fail to be irreducible. On the other hand, Proposition \ref{P:Saskin} guarantees that this condition is in fact equivalent to a statement about irreducible $*$-representations.

Finally, we would like to note that it also appears somewhat notable that there are many examples of unital operator spaces where condition {\rm (i)} fails (Example \ref{E:all_BR_aue}).

\subsection{Essential normality}\label{SS:EssNorm}

Next, we apply our machinery to a different class of operator systems and relate our work to another well-studied conjecture of Arveson.

For these purposes, let $d\geq2$ be a fixed positive integer, $\bB_d\subset\bC^d$ denote the open unit ball and $\bC[z] = \bC[z_1, \ldots, z_d]$ denote the algebra of complex $d$-variate polynomials. The Drury-Arveson space $H_d^2$ is the reproducing kernel Hilbert space on $\bB_d$ that is associated with the kernel \[k(z,w) = \frac{1}{1-\langle z, w\rangle}, \ \ \ \ \ \ z,w\in\bB_d.\]

Let $M_p\in B(H_d^2)$ denote the polynomial multiplier associated with $p\in\bC[z]$. The row operator \[M_z = (M_{z_1}, \ldots, M_{z_d}): (H_d^2)^{(d)} \longrightarrow H_d^2\] is called the $d$-shift of the Drury-Arveson space. Next, fix a homogeneous ideal $I\vartriangleleft\bC[z]$. It is clear that $IH_d^2$ is invariant for $M_z$. Then, we define a $d$-tuple $S = (S_1, \ldots, S_d)$ of operators on $\F_I = H_d^2\ominus IH_d^2$ given by \[ S_i = P_{\F_I} M_{z_i}\mid_{\F_I}, \ \ \ \ \ \ 1\leq i \leq d.\]Define $\fT_I = \rC^*(\S_I)$ to be the Toeplitz algebra associated with the homogeneous ideal $I$. Then, it is known that $\fK(\F_I)\subset \fT_I$ \cite[Theorem 1.3]{popescu2006operator} and so we define $\O_I = \fT_I/\fK(\F_I)$ to be the Cuntz algebra associated with the ideal. Throughout, we let $q:\fT_I\rightarrow\O_I$ denote the quotient map. We now present the main conjecture on this tuple of operators.

$\left.\right.$\\
{\bf Arveson's Essential Normality Conjecture:} If $I\vartriangleleft\bC[z]$ is a homogeneous ideal, then $S_iS_j^*-S_j^*S_i$ is a compact operator for every $1\leq i,j \leq d.$\\
\vspace{1mm}

Tuples of operators that satisfy the above criterion are said to be \emph{essentially normal}. Among the main interests in Arveson's essential normality conjecture is that $S$ is the universal operator $d$-tuple for commuting row contractions which satisfy $p(S) = 0$ for each $p\in I$ \cite[Theorem 8.4]{shalit2009subproduct}. Accordingly, the essential normality conjecture is connected with a theory of non-commutative varieties on the ball.

Kennedy and Shalit revealed a connection between the essential normality conjecture and Arveson's hyperrigidity conjecture. Assume that $I\vartriangleleft\bC[z]$ is a non-zero homogeneous ideal that contains no linear polynomials and is not of finite codimension in $\bC[z]$. If we let $\S_I$ denote the operator system generated by $S_1, \ldots, S_d$, then essential normality of the tuple $S$ is equivalent to hyperrigidity of the operator system $\S_I$ \cite[Theorem 4.12]{kennedy2015essential},\cite{kennedy2016corrigendum}. Now, we will reframe essential normality of the tuple in terms of the approximate unique extension property.

\begin{theorem}\label{T:HREssNorm}
Let $I\vartriangleleft\bC[z]$ be a non-zero homogeneous ideal that contains no linear polynomials and such that $I$ is not of finite codimension in $\bC[z]$. Then, the following statements are equivalent.\begin{enumerate}[\rm (i)]
\item There is an isometric $*$-representation of $\O_I$ that has the approximate unique extension property with respect to $q(\S_I)$.
\item The tuple $S= (S_1, \ldots, S_d)$ is essentially normal.
\item The operator system $\S_I$ is hyperrigid.
\item Every irreducible $*$-representation of $\fT_I$ is a boundary representation for $\S_I$.
\end{enumerate}
\end{theorem}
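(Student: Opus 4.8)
The plan is to prove the four statements equivalent by linking the new conditions (i) and (iv) to the Kennedy--Shalit dichotomy, which already supplies $\mathrm{(ii)}\Leftrightarrow\mathrm{(iii)}$ through \cite[Theorem 4.12]{kennedy2015essential},\cite{kennedy2016corrigendum}, while $\mathrm{(iii)}\Rightarrow\mathrm{(iv)}$ is the implication that hyperrigidity forces every irreducible representation to be a boundary representation, recorded in Theorem \ref{T:AUESummary}. Thus it suffices to close the loop $\mathrm{(iv)}\Rightarrow\mathrm{(i)}\Rightarrow\mathrm{(ii)}$. The conceptual heart of the argument is to reduce (i) to the single assertion that $\O_I$ is the $\rC^*$-envelope of $q(\S_I)$; once this reduction is in place, the problem becomes one purely about the Drury--Arveson quotient, where essential normality can be recognized.

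For $\mathrm{(iv)}\Rightarrow\mathrm{(i)}$ I would let $\{\beta_j\}$ range over the irreducible $*$-representations of $\O_I$. Each $\beta_j\circ q$ is an irreducible representation of $\fT_I$ annihilating $\fK(\F_I)$, hence a boundary representation for $\S_I$ by (iv). Setting $\rho=\bigoplus_j\beta_j$, a faithful representation of $\O_I$, Lemma \ref{L:UEP} gives that $\rho\circ q=\bigoplus_j(\beta_j\circ q)$ has the unique extension property with respect to $\S_I$, hence the approximate one by Theorem \ref{T:UEPimplyAUEP}, and Lemma \ref{L:ApproxUEPquotient} transfers this to $\rho$ relative to $q(\S_I)$. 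For the reduction of (i) to the envelope condition, suppose $\rho$ realizes (i) and let $\fS$ be the Shilov ideal of $q(\S_I)$ in $\O_I$. Writing $\kappa:\O_I\to\O_I/\fS=\rC^*_e(q(\S_I))$ for the quotient and extending the complete order isomorphism $(\rho\mid_{q(\S_I)})\circ(\kappa\mid_{q(\S_I)})^{-1}$ to a unital completely positive map $\Phi$ on $\O_I/\fS$ by Arveson's extension theorem, the composite $\psi=\Phi\circ\kappa:\O_I\to B(\H_\rho)$ is a unital completely positive extension of $\rho\mid_{q(\S_I)}$ that annihilates $\fS$. The homomorphic conditional expectation $\Gamma_\psi$ of Theorem \ref{T:AUEPsequence} then satisfies $\Gamma_\psi\circ\psi=\rho$, so $\rho$ vanishes on $\fS$, and faithfulness forces $\fS=0$. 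Conversely, if $\O_I=\rC^*_e(q(\S_I))$ then (i) holds by Proposition \ref{P:AUEPker} (ii). Hence (i) is equivalent to the statement that $\O_I=\rC^*_e(q(\S_I))$.

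The remaining and decisive step is $\mathrm{(i)}\Rightarrow\mathrm{(ii)}$, that is, that $\O_I=\rC^*_e(q(\S_I))$ entails essential normality. The mechanism I would exploit is the Cuntz relation $\sum_i q(S_i)q(S_i)^*=I$: writing $e=I-\sum_i S_iS_i^*$, which is the rank-one projection onto the constants in $\F_I$ and therefore generates $\fK(\F_I)$, the Schwarz inequality forces $\varphi(e)=0$ for every unital completely positive extension $\varphi$ of $(\rho\circ q)\mid_{\S_I}$, so that $e$ lies in the multiplicative domain and $\varphi$ kills $\fK(\F_I)$; thus every such extension descends to $\O_I$, and $\rho\circ q$ itself acquires the approximate unique extension property with respect to $\S_I$. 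The main obstacle is the final identification: passing from the rigidity of $q(\S_I)$ inside $\O_I$ (triviality of the Shilov ideal, equivalently the approximate unique extension property of $\rho\circ q$) to compactness of the commutators $S_iS_j^*-S_j^*S_i$. This is precisely the geometric content of Kennedy and Shalit's analysis of these quotient modules, and I would invoke \cite[Theorem 4.12]{kennedy2015essential} to conclude that the envelope condition coincides with essential normality, thereby closing the cycle through $\mathrm{(ii)}\Leftrightarrow\mathrm{(iii)}$.
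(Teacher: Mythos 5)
Your overall architecture is sound, and several pieces are genuinely nice: the implication (iv)$\Rightarrow$(i) via Lemma \ref{L:UEP}, Theorem \ref{T:UEPimplyAUEP} and Lemma \ref{L:ApproxUEPquotient} is exactly the paper's route, and your reduction of (i) to the triviality of the Shilov ideal of $q(\S_I)$ in $\O_I$ (extending through $\O_I/\fS$ by Arveson's theorem and then using $\Gamma_\psi\circ\psi=\rho$ from Theorem \ref{T:AUEPsequence}) is correct and makes explicit something the paper only mentions in its closing remark. However, the decisive implication (i)$\Rightarrow$(ii) is not closed as written, for two reasons. First, the multiplicative-domain step is invalid: knowing that $\varphi(e)=0$ for the rank-one projection $e=I-\sum_iS_iS_i^*$, and hence that $e$ lies in the multiplicative domain of $\varphi$, does not imply that $\varphi$ annihilates the ideal generated by $e$. (A vector state on $B(\ltwo)$ given by a unit vector orthogonal to $e_1$ kills the projection onto $\bC e_1$ and places it in its multiplicative domain, yet does not kill $\fK(\ltwo)$.) To repair this you must use the equality case of the Schwarz inequality: $\sum_i\varphi(S_i)\varphi(S_i)^*=\varphi\bigl(\sum_iS_iS_i^*\bigr)$ forces $\varphi(S_iS_i^*)=\varphi(S_i)\varphi(S_i)^*$ for each $i$, placing each $S_i$ in the left multiplicative domain, whence $\varphi(S_peS_q^*)=\varphi(S_p)\varphi(e)\varphi(S_q^*)=0$ and $\varphi$ vanishes on $\fK(\F_I)$; this is exactly what \cite[Lemma 4.6]{kennedy2015essential} records, and the paper simply cites it. Second, and more seriously, your final appeal to \cite[Theorem 4.12]{kennedy2015essential} does not deliver what you assign to it: that theorem equates essential normality with \emph{hyperrigidity} of $\S_I$, whereas what you have established is only that $\O_I=\rC^*_e(q(\S_I))$. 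The fact you actually need is \cite[Theorem 3.3]{kennedy2015essential}, that $\rC^*_e(q(\S_I))$ is commutative; granting that, $\O_I=\rC^*_e(q(\S_I))$ forces $[q(S_i),q(S_j)^*]=0$, i.e. $S_iS_j^*-S_j^*S_i\in\fK(\F_I)$, and your cycle closes.

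For comparison, the paper's proof of (i)$\Rightarrow$(ii) avoids the envelope altogether: it uses \cite[Proposition 4.13]{kennedy2015essential} to manufacture a specific unital completely positive extension $\psi$ of $\sigma\mid_{\S_I}$ (where $\sigma=\pi\circ q$) satisfying $\psi(S_i^*S_j)=\sigma(S_jS_i^*)$, checks via \cite[Lemma 4.6]{kennedy2015essential} that $\psi$ descends to $\O_I$ so that the approximate unique extension property of $\pi$ applies, and then applies the homomorphic conditional expectation $\Gamma_\psi$ of Theorem \ref{T:AUEPsequence} to conclude $\sigma(S_i^*S_j)=\sigma(S_jS_i^*)$, which is essential normality since $\ker\sigma=\fK(\F_I)$. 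Your envelope route, once repaired with the citation of \cite[Theorem 3.3]{kennedy2015essential}, is arguably cleaner; but as submitted the argument has a genuine hole at its most important step.
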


\begin{proof}
The direction {\rm (iii)}$\Rightarrow$ {\rm (iv)} is trivial, {\rm (iv)}$\Rightarrow${\rm (i)} is a consequence of Lemma \ref{L:ApproxUEPquotient} and Theorem \ref{T:AUESummary}, and {\rm (ii)}$\Leftrightarrow${\rm (iii)} is \cite[Theorem 4.12]{kennedy2015essential},\cite{kennedy2016corrigendum}.

{\rm (i)}$\Rightarrow$ {\rm (ii)}: Let $\pi:\O_I\rightarrow B(\H_\pi)$ be an isometric $*$-representation that has the approximate unique extension property with respect to $q(\S_I)$ and consider $\sigma:=\pi\circ q$. Observe that $\sigma$ vanishes over the compact operators on $\F_I$ and so, by \cite[Proposition 4.13]{kennedy2015essential}, there exists a unital completely positive map $\psi:\fT_I\rightarrow B(\H_\pi)$ such that $\psi\mid_{\S_I} = \sigma\mid_{\S_I}$ and \[\psi(S_i^*S_j)  = \sigma(S_jS_i^*), \ \ \ \ \ \ 1\leq i,j\leq d.\]Moreover, by \cite[Lemma 4.6]{kennedy2015essential}, we may express $\psi = \varphi\circ q$ where $\varphi:\O_I\rightarrow B(\H_\pi)$ is a unital completely positive map. Since $\varphi\mid_{q(\S_I)} = \pi\mid_{q(\S_I)}$, we obtain a net of unitaries $u_\beta\in B(\H_\pi)$ such that $u_\beta^*\varphi(x)u_\beta\rightarrow \pi(x)$ in the weak operator topology for every $x\in\O_I$. Therefore, $u_\beta^*\psi(t)u_\beta\rightarrow\sigma(t)$ in the weak operator topology for every $t\in\fT_I$. As in the proof of Theorem \ref{T:AUEPsequence}, we obtain a homomorphic conditional expectation $\Gamma_\psi: \rC^*(\text{Im}\psi)\rightarrow\text{Im}\sigma$ satisfying $\Gamma_\psi\circ\psi = \sigma$. In particular, we see that \[ \sigma(S_i^*S_j) = (\Gamma_\psi\circ \psi)(S_i^*S_j) = \sigma(S_jS_i^*), \ \ \ \ \ \ 1\leq i,j\leq d.\]Since $\ker\sigma = \fK(\F_I)$, we obtain that $S_i^*S_j-S_jS_i^*$ is compact for every $1\leq i,j\leq d$. In other words, the tuple $S = (S_1,\ldots, S_d)$ is essentially normal.
\end{proof}

\begin{remark}\label{R:EssNorm}
Mimicing the above proof, one may replace condition (i) by the statement that there is a $*$-representation of $\fT_I$ possessing the approximate unique extension property with respect to $\S_I$ with kernel equal to the compact operators on $\F_I$.
\end{remark}

The fact that the operator system $\S_I$ satisfies the conditions of Arveson's hyperrigidity conjecture had been previously unrecorded (see \cite[Discussion after Theorem 4.12]{kennedy2015essential}). However, one can derive this from the results in \cite{kennedy2015essential}. Indeed, by \cite[Theorem 3.3]{kennedy2015essential}, the $\rC^*$-envelope of $q(\S_I)$ is commutative and so the boundary representations consist of some collection of characters. Recall that boundary representations must factor through the $\rC^*$-envelope by Theorem \ref{T:HR} {\rm (ii)}. It is then easy to check that, if a boundary representation $\pi:\fT_I\rightarrow B(\H_\pi)$ for $\S_I$ vanishes over the compact operators on $\F_I$, then $\pi$ must be a character. Thus, if every irreducible $*$-representation of $\fT_I$ is a boundary representation, we must have that $\O_I$ is a commutative $\rC^*$-algebra. Consequently, the tuple $S$ is essentially normal and thus, $\fT_I$ is hyperrigid by \cite[Theorem 4.12]{kennedy2015essential},\cite{kennedy2016corrigendum}.

A priori, condition {\rm (i)} appears to be a weak stipulation and thus, one may expect that there are many choices of homogeneous ideals that will result in essentially normal tuples. However we remind the reader that, since essential normality of the tuple $S$ is equivalent to $\O_I$ being the $\rC^*$-envelope of $q(\S_I)$, it is unclear whether $\O_I$ will necessarily have an isometric $*$-representation with the approximate unique extension property.

\bibliographystyle{plain}
\bibliography{approx_uep_arXiv_v1}
	
\end{document}